\numberwithin{theorem}{section}
\newcommand{\TheTitle}{Globally Exact Asymptotics for Integrals with Arbitrary Order Saddles} 
\newcommand{\TheAuthors}{T. Bennett, C. J. Howls, G. Nemes,  A. B. Olde Daalhuis}
\title{{\TheTitle}\thanks{Submitted to the editors October 27, 2017.
\funding{G. Nemes and A. B. Olde Daalhuis were supported by a research grant (GRANT11863412/70NANB15H221) from the National Institute of Standards and Technology.  T. Bennett was sponsored by an EPSRC studentship.}}}
\author{
  T. Bennett\thanks{SMSAS, University of Kent, Sibson Building, Parkwood Road, Canterbury CT2 7FS, UK.
    %(\email{ddoe@imag.com}, \url{http://www.imag.com/\string~ddoe/})
    }
  \and
  C. J. Howls\thanks{Mathematical Sciences, University of Southampton, Highfield, Southampton SO17 1BJ, UK.}
 \and
 G. Nemes\thanks{School of Mathematics, University of Edinburgh, James Clerk Maxwell Building, The King's Buildings, Edinburgh EH9 3FD, UK.}
  \and
  A. B. Olde Daalhuis\footnotemark[4]
}
\def\HypTermZero#1{{\bf F}^{(0)}(#1)}
\newcommand{\HypTermOne}[4]{{{\bf F}^{(1)} \left( #1;\begin{array}{c}{#2}\\ {#3}\\ {#4}\\ \end{array} \right)}}
\newcommand{\HypTermTwo}[7]{{{\bf F}^{(2)} \left( #1;\begin{array}{c}{#2},\\ {#4},\\ {#6},\\ \end{array} \begin{array}{c}{#3}\\ {#5}\\ {#7}\\ \end{array}\right)}}
\newcommand{\HypTermK}[3]{{{\bf F}^{(#1)} \left( #2;#3 \right)}}
\def\i{\mathrm{i}}
\def\d{\mathrm{d}}
\def\e{\mathrm{e}}
\def\ifrac#1#2{\textstyle{{#1}\over{#2}}\displaystyle}
\begin{document}

\maketitle

% REQUIRED
\begin{abstract}
We derive the first exact, rigorous but practical, globally valid remainder terms for asymptotic expansions about saddles and contour endpoints of arbitrary order degeneracy derived from the method of steepest descents. The exact remainder terms lead naturally to sharper novel asymptotic bounds for truncated expansions that are a significant improvement over the previous best existing bounds for quadratic saddles derived two decades ago. We also develop a comprehensive hyperasymptotic theory, whereby the remainder terms are iteratively re-expanded about adjacent saddle points to achieve better-than-exponential accuracy. By necessity of the degeneracy, the form of the hyperasymptotic expansions are more complicated than in the case of quadratic endpoints and saddles, and require generalisations of the hyperterminants derived in those cases. However we provide efficient methods to evaluate them, and we remove all possible ambiguities in their definition.  We illustrate this approach for three different examples, providing all the necessary information for the practical implementation of the method.
\end{abstract}

% REQUIRED
\begin{keywords}
integral asymptotics, asymptotic expansions, hyperasymptotics, error bounds, saddle points
\end{keywords}

% REQUIRED
\begin{AMS}
41A60, 41A80, 58K05
\end{AMS}

\section{Introduction}

From catastrophe theory it is well known that integrals with saddle points may be used to 
compactly encapsulate the local behaviour of linear wavefields near the underlying 
organising caustics, see for example \cite{PS96,Berry80}. The saddle points correspond to rays of the 
underpinning ODEs or PDEs.  Their coalescence corresponds to tangencies of the rays at the caustics, leading to nearby peaks in the wave 
amplitude.  On the caustics, the coalesced saddle points are degenerate.  The local analytical
behaviour on the caustic may be derived from an asymptotic expansion about the degenerate saddle \cite[\href{http://dlmf.nist.gov/36}{Ch.~36}]{NIST:DLMF}.  An analytical understanding of the 
asymptotic expansions involving degenerate saddles is thus essential to an examination of the wavefield behaviour on caustics.  A modern 
approach to this includes the derivation of globally exact remainders, sharp error bounds and the exponential improvement of the 
expansions to take into account the contributions of terms beyond all orders.  

Recent work in quantum field and string theories, e.g.,  \cite{Dunne16a, Dunne16b, Dunne16c, Dunne17a}  
has led to a major increase in interest in such resurgent approaches in the context of integral asymptotics.
One of the reasons to study the higher orders of expansions in QFT is that they can reveal information as to the location of remote critical points, 
corresponding to physical quantities that can give rise to non-perturbative effects.��
A notable recent success is that high order calculations can be used to uncover previously unknown functional relationships between perturbative and non-perturbative effects within quantum eigenvalue expansions
\cite{Dunne14}.

The first globally exact remainders for asymptotic expansions of integrals possessing simple saddle points were derived by Berry and Howls \cite{BH91}. The remainder terms were expressed in terms of self-similar integrals over doubly infinite contours passing through a set of adjacent simple saddles. Boyd \cite{Boyd93} provided a rigorous justification of the exact remainder terms, together with significantly improved error bounds.

The remainder terms automatically incorporated and precisely accounted for the Stokes phenomenon \cite{Stokes1857}, whereby exponentially subdominant asymptotic contributions are switched on 
as asymptotics or other parametric changes cause the contour of integration to deform to pass through the adjacent saddles. The Stokes phenomenon occurs across subsets in parameter space called Stokes lines. 

Re-expansion of the exact remainder term about the adjacent saddles, using their own exact remainder terms led to a hyperasymptotic 
expansion, which delivered better-than-exponential numerical accuracy.

Subsequent work extended globally exact remainder terms and hyperasymptotic analysis to integrals over contours with finite endpoints \cite{Howls92} 
and multiple integrals \cite{Howls97}, \cite{DH02}. Parallel approaches to differential equations using Cauchy--Heine and Borel transforms were taken by 
Olde Daalhuis and Olver \cite{OO95b}, \cite{OD98b}. This resulted in efficient methods for computation of the universal hyperterminants 
\cite{OD98c}.   The efficient computation of hyperterminants not only made hyperasymptotic expansions numerically feasible, but more 
importantly, in the absence of the geometric information present in single dimensional integral calculations, allowed them to be used to 
calculate the Stokes constants that are required in an exponentially accurate asymptotic calculation involving, for example, the solution 
satisfying given boundary data.

However, the general case of globally exact remainder terms and hyperasymptotic expansions of a single-dimensional integral possessing a 
set of arbitrary order degenerate saddle points has not yet been considered. The purpose of this paper is to fill this surprising gap.

Hence, in this paper, we provide the first comprehensive globally exact asymptotic theory for integrals with analytic integrands involving finite 
numbers of arbitrarily degenerate saddle points. It incorporates the special case of Berry and Howls \cite{BH91} and Howls \cite{Howls92}. 
However the complexity of the situation uncovers several new features that were not present in the simple saddle case.

First, the nature of the steepest paths emerging from degenerate saddles gives multiple choices as to which contours might be integrated 
over, or which might contribute to the remainder term.  It is necessary to adopt a more stricter convention regarding the choice of steepest 
paths to clarify the precise nature of the contributions to the remainder and hyperasymptotic expansions.

Second, the degenerate nature requires us to explore additional Riemann sheets associated to the local mappings about the saddle points. 
This gives rise to additional complex phases, not obviously present in the simple saddle case, that must be taken into account depending on 
the relative geometrical disposition of the contours.

Third, we provide sharp, rigorous bounds for the remainder terms in the Poincar\'e asymptotic expansions of integrals with arbitrary critical points.
In particular, we improve the results of Boyd \cite{Boyd93} who considered integrals with only simple saddles. Our bounds are sharper, and have
larger regions of validity.

Fourth, the hyperasymptotic tree structure that underpins the exponential improvements in accuracy is {\it prima facie} more complicated.  At 
the first re-expansion of a remainder term, for each adjacent degenerate saddle there are two contributions arising from the choice of 
contour over which the remainder may be taken.  At the second re-expansion, each of these two contributions may give rise to another two, 
and so on.  Hence, while the role of the adjacency of saddles remains the same, the numbers of terms required at each hyperasymptotic 
level increases twofold for each degenerate saddle at each level. Fortunately these terms may be related, and so the propagation of 
computational complexity is controllable.

Fifth, the hyperterminants in the expansion are more complicated than those in \cite{BH91}, \cite{Murphy97}, \cite{OD98b} or  \cite{OD98c}. However we provide efficient methods to evaluate them.

Sixth, the results of this integral analysis reveals new insights into the asymptotic expansions of higher order differential equations.  

There have been several near misses at a globally exact remainder term for degenerate saddles arising from single dimensional integrals.  

Ideas similar to those employed by Berry and Howls were used earlier by Meijer. In a series of papers \cite{Meijer32a}, \cite{Meijer32b}, 
\cite{Meijer33} he derived exact remainder terms and realistic error bounds for specific special functions, namely Bessel, Hankel and Anger--Weber-type functions. Nevertheless, he missed the extra step that would have led him to more general remainder terms of \cite{BH91}.

Dingle \cite{Dingle73}, whose pioneering view of resurgence underpins most of this work, considered expansions around cubic saddle 
points, and gave formal expressions for the higher order terms.  However, he did not provide exact remainder terms or consequent (rigorous) 
error estimates.

Berry and Howls, \cite{BH93}, \cite{BH94}, considered the cases of exponentially improved uniform expansions of single dimensional 
integrals as saddle points coalesced. The analysis \cite{BH93} focused on the form of the late terms in the more complicated uniform 
expansions. They \cite{BH94} provided an approximation to the exact remainder term between a simple and an adjacent cluster of saddles 
illustrating the persistence of the error function smoothing of the Stokes phenomenon \cite{Berry89} as the Stokes line was crossed.  Neither 
of these works gave globally exact expressions for remainder terms involving coalesced, degenerate saddles.  

Olde Daalhuis \cite{OD00a} considered a Borel plane treatment of uniform expansions, but did not extend the work to include arbitrary degenerate saddles.

Breen \cite{Breen99} briefly considered the situation of degenerate saddles. The work restricted attention to cubic saddles and, like all the above work, did not provide rigorous error bounds or develop a hyperasymptotic expansion. 

Other notable work dealing with exponential asymptotics include \cite{Boyd98},
\cite{Boyd99},
\cite{Grimshaw10},
\cite{Jones97},
\cite{Lombardi00},
\cite{Segur91}, and
\cite{Ward10}.

It should be stressed that the purpose of a hyperasymptotic approach is not {\it per se} to calculate functions to high degrees of numerical 
accuracy: there are alternative computational methods. Rather, hyperasymptotics is as an analytical tool to incorporate 
exponentially small contributions into asymptotic approximations, so as to widen the domain of validity, understand better the underpinning 
singularity structures and to compute invariants of the system such as Stokes constants whose values are often assumed or left as 
unknowns by other methods (see for example \cite{Gil07}).

The idea for this paper emerged from the recent complementary and independent thesis 
work of \cite{Bennett15}, \cite{Nemes15}, which gave rise to the current collaboration.  
This collaboration has resulted in the present work which incorporates not only a 
hyperasymptotic theory for both expansions arising from non-degenerate and degenerate 
saddle points, but also significantly improved rigorous and sharp error bounds for the 
progenitor asymptotic expansions.

The structure of the paper is as follows.

In Section \ref{sec2}, we introduce arbitrary finite integer degenerate saddle points.  In Section \ref{sec3}, we derive the exact remainder 
term for an expansion about a semi-infinite steepest descent contour emerging from a degenerate saddle and running to a valley at infinity.  
The remainder term is expressed as a sum of terms of contributions from other, adjacent saddle points of the integrand.  Each of these 
contributions is formed from the difference of two integrals over certain semi-infinite steepest descent contours emerging from the adjacent 
saddles.

In Section \ref{sec4}, we iterate these exact remainder terms to develop a hyperasymptotic expansion. We introduce novel hyperterminants (which simplify to those of Olde Daalhuis  \cite{OD98c} when the saddles are non-degenerate).  

In Section \ref{sec5}, we provide explicit rigorous error bounds for the zeroth hyperasymptotic level. These novel bounds are sharper than those derived by Boyd \cite{Boyd93}.

In Section \ref{sec6}, we illustrate the degenerate hyperasymptotic method with an application to an integral related to the Pearcey function, 
evaluated on its cusp caustic. The example involves a simple and doubly degenerate saddle. In Section \ref{sec7}, we provide an illustration 
of the extra complexities of a hyperasymptotic treatment of degeneracies with an application to an integral possessing triply and quintuply 
degenerate saddle points. In this example, we also illustrate the increased size of the remainder near a Stokes line as predicted in Section 
\ref{sec5}.  In Section \ref{sec8}, we give an example of how it is possible to make an algebraic (rather than geometric) determination of the saddles 
that contribute to the exact remainder terms in a swallowtail-type integral through a hyperasymptotic examination of the late terms in the 
saddle point expansion.

In Section \ref{sec9}, we conclude with a discussion on the application of the results of this paper to the (hyper-) asymptotic expansions of higher order differential equations.

\section{Definitions and assumptions}\label{sec2}

%Let $\omega_j$ be a positive integer, with $j=1,2,\dots$ an integer index. Consider a function $f(t)$, analytic in a domain $\Delta^{(j)}\in{\mathbb C}$.   The point $t^{(j)}\in \Delta^{(j)}$, is called a critical point of order $\omega_{j}-1$ of $f(t)$, if

Let $\omega_j$ be a positive integer, with $j=1,2,\dots$ an integer index. Consider a function $f(t)$, analytic in a domain of the complex plane. The point $t^{(j)}$, is called a critical point of order $\omega_{j}-1$ of $f(t)$, if
%%%%%%%%%%%%%%%%%%%%%%%%%%
\begin{equation}\nonumber
f^{(p)} (t^{(j)} ) =  0 \ \ \ \text{but} \ \ f^{(\omega _j )} (t^{(j)} ) \ne 0, \ \
\text{for \ all} \ \ p=1,\ldots, \omega_{j}-1. 
\end{equation}
%%%%%%%%%%%%%%%%%%%%%%%%%%
When $\omega_{j}=1,2, >2$, $t^{(j)}$ is, respectively, a linear endpoint, a simple saddle point, a degenerate saddle point.  For analytic $f(t)$, 
the saddle points are then all isolated.  Henceforth we denote the value of $f(t)$ at $t=t^{(j)}$ by $f_{j}$.

We shall derive the steepest descent expansion, together with its exact remainder term, of integrals of the type
%%%%%%%%%%%%%%%%%%%%%%%%%%%%%%%%%%%%%%%%%%%%%%%%%%%%%%%%%%%%
\begin{equation}\label{eq1}
I^{(n)}( z; \alpha_{n} ) = \int_{\mathscr{P}^{\left(n\right)}} \e^{ - zf(t)} g(t)\d t , \ \  z=|z|\e^{\i\theta}, \ \ |z|\rightarrow \infty,
\end{equation}
%%%%%%%%%%%%%%%%%%%%%%%%%%%%%%%%%%%%%%%%%%%%%%%%%%%%%%%%%%%%
where $\mathscr{P}^{(n)}=\mathscr{P}^{(n)}( \theta ; \alpha_{n} )$ is one of the $\omega_{n}$ paths of steepest descent emanating from the 
$(\omega_{n}-1)^{\rm st}$-order critical point $t^{(n)}$ of $f(t)$ and passing to infinity in a valley of $\mathop{\rm Re}\left[-\e^{\i\theta } (f(t) - f_n)\right]
$. 

Suppose we use the notation of ($\omega_{n}\rightarrow {\bf \omega_{m}}$) to indicate the remainder term that rises from an asymptotic 
expansion about a endpoint/saddle point $n$ of order $\omega_{n}$ in terms of the adjacent (in a sense to be defined later) set of saddles 
${\bf m}=\{m_{1},m_{2},m_{3},\dots\}$, of orders corresponding to the values $ \omega_{{\bf m}}=\{\omega_{m_{1}},\omega_{m_{2}},\dots \}
$.  Thus Berry and Howls \cite{BH91} dealt with $(\omega_{n}\rightarrow \omega_{{\bf m}}) = (2\rightarrow {\bf 2})$, for doubly infinite contours.  Howls \cite{Howls92} 
dealt with $(1\rightarrow {\bf 2})$ and the $(2\rightarrow {\bf 2})$.  
Our goal here is to derive the exact remainder terms for arbitrary 
integers ($\omega_{n}\rightarrow {\bf \omega_{m}}$).

On the steepest path $\mathscr{P}^{\left(n\right)}( \theta ; \alpha_{n} )$ emerging from $t^{(n)}$, we have
%%%%%%%%%%%%%%%%%%%%%%%%%%%%%%%%%%%%%%%%%%%%%%%%%%%%%%%%%%%%
\begin{equation}\label{eq1a}
\arg\left[\e^{\i\theta } (f(t) - f_{n})\right]=2\pi \alpha_{n}, 
\end{equation}
%%%%%%%%%%%%%%%%%%%%%%%%%%%%%%%%%%%%%%%%%%%%%%%%%%%%%%%%%%%%
for a suitable integer $\alpha_{n}$ (see Figure \ref{fig1}). 

The local behaviour of $f(t)$ at the critical point $t^{\left(n \right)}$ of order $\omega_{j}-1$ is given by
\begin{equation}\label{eq4a}
f(t) - f_n = \frac{f^{(\omega _n )} (t^{(n)} )}{\omega _n !}\left(t - t^{(n)} \right)^{\omega _n }  + \mathcal{O}\left(\left|t - t^{\left( n \right)} \right|^{\omega_{n}+1}\right).
\end{equation}

From \eqref{eq1a} and \eqref{eq4a}, we hence find that
\begin{equation}\label{alpha}
\alpha _n  = \frac{\theta  + \arg (f^{(\omega _n )} (t^{(n)} )) + \omega_n\varphi}{2\pi},
\end{equation}
where $-\pi  < \arg \left(f^{(\omega _n )} (t^{(n)} )\right) \le \pi$, and $\varphi$ ($-\pi < \varphi \le \pi$) is the angle of the slope of $\mathscr{P}^{(n)}( \theta ; \alpha_{n} )$ at $t^{(n)}$, i.e., $\lim \left(\arg(t-t^{(n)})\right)$ as $t\to t^{(n)}$ along $\mathscr{P}^{(n)}( \theta ; \alpha_{n} )$.
%%%%%%%%%%%%%%%%%
%%% Figure 1 %%%
%%%%%%%%%%%%%%%%%
\begin{figure}
\centering\includegraphics[width=0.6\textwidth]{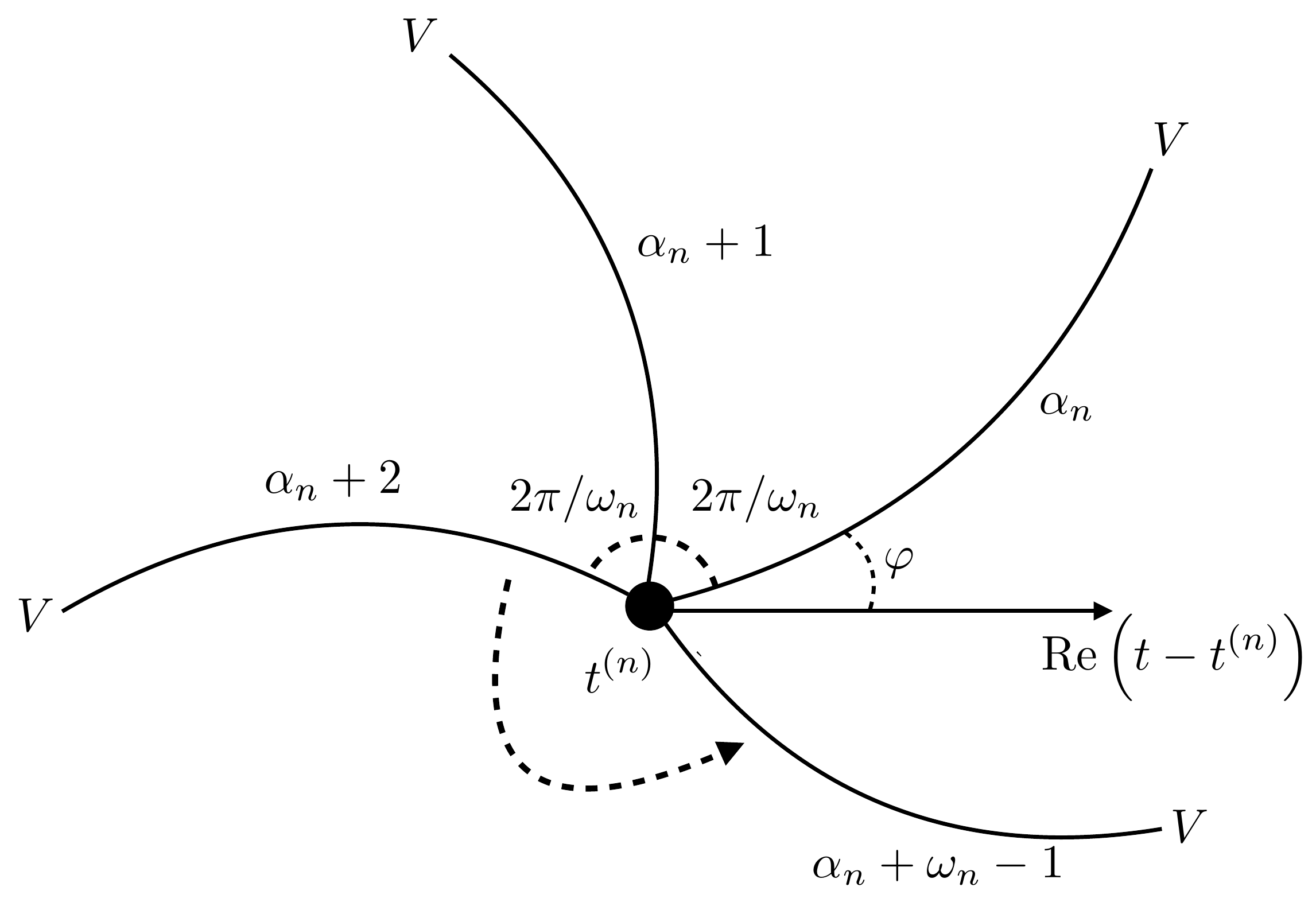}%
\caption{The $\omega_{n}$ paths of steepest descent emanating from the $(\omega_{n}-1)^{{\rm st}}$-order critical point $t^{(n)}$ of $f(t)$.}
\label{fig1}
\end{figure}
%%%%%%%%%%%%%%%%%
%%%%%%%%%%%%%%%%%

The functions $f(t)$ and $g(t)$ are assumed to be analytic in the closure of a domain $\Delta^{(n)}$. We suppose further that $\left| {f(t)} \right| \to \infty$ as $t\to \infty$ in $\Delta^{(n)}$, and $f(t)$ has several other saddle points in the complex $t$-plane at $t=t^{\left(j\right)}$ labelled by $j \in \mathbb{N}$. 

%We denote by $\mathscr{C}^{\left(p\right)}( \theta ; \alpha_{p},\beta_{p} )$ the doubly infinite steepest descent path through the saddle $t^{\left(p\right)}$ of order $\omega_{p}-1$ formed by the union of the two semi infinite paths $\mathscr{P}^{\left(p\right)}( \theta ; \alpha_{p} )$ and $\mathscr{P}^{\left(p\right)}( \theta ; \beta_{p} )$.  The orientation is chosen so that paths inwards and issuing from the saddle point are, respectively, $\mathscr{P}^{\left(p\right)}( \theta ; \beta_{p} )$ and $\mathscr{P}^{\left(p\right)}( \theta ; \alpha_{p} )$. 

The domain $\Delta^{(n)}$ is defined by considering all the steepest descent paths for different values of $\theta$, which emerge from 
the critical point $t^{(n)}$. In general these paths can end either at infinity or at a singularity of $f(t)$. We assume that all of them end at infinity. Since there are no branch points of $f(t)$ along these paths, any point in the $t$-plane either cannot be reached by any path of steepest descent issuing from $t^{(n)}$, or else by only one. A continuity argument shows that the set of all the points which can be reached by a steepest descent path from $t^{(n)}$ forms the closure of a domain in the $t$-plane. It is this domain which we denote by $\Delta^{(n)}$, see for example Figure \ref{fig2}.

Instead of considering the raw integral \eqref{eq1}, it will be convenient to consider instead its slowly varying part, defined by
%%%%%%%%%%%%%%%%%%%%%%%%%%%%%%%%%%%%%%%%%%%%%%%%%%%%%%%%%%%%
\begin{equation}\label{eq2}
T^{(n)} ( z ;\alpha_{n}) := \omega_{n}z^{1/\omega_{n}}\e^{zf_{n}} I^{(n)} ( z ;\alpha_{n}) =  \omega_{n}z^{1/\omega_{n}}\int_{\mathscr{P}^{\left(n \right)} } \e^{ - z(f(t) - f_{n})} g(t)\d t.
\end{equation}
%%%%%%%%%%%%%%%%%%%%%%%%%%%%%%%%%%%%%%%%%%%%%%%%%%%%%%%%%%%%
The $\omega_{n}^{\rm th}$ root is defined to be positive on the positive real line and is defined by analytic continuation elsewhere. 
We call $T^{(n)} ( z ;\alpha_{n})$ the slowly varying part because it is $\mathcal{O}(1)$ as $z\to\infty$ (cf.~\eqref{FirstExpansion}).

It is convenient to introduce the following notation for the special double integrals and their coefficients in the asymptotic expansions
\begin{gather}
\begin{split}\label{double}{\bf T}^{(m)}(u;\alpha_{n})&=T^{(m)}(u;\alpha_{n})-T^{(m)}(u;\alpha_{n}+1),\\
\qquad {\bf T}^{(m)}_r(\alpha_{n})&=T^{(m)}_r(\alpha_{n})-T^{(m)}_r(\alpha_{n}+1).
\end{split}
\end{gather}

The path $\mathscr{P}^{\left( n \right)} ( \theta ;\alpha_{n} )$ passes through certain other saddle points $t^{\left(m\right)}$ when $\theta  = 
\theta _{nm}^{[1]} ,\theta _{nm}^{[2]} ,\theta _{nm}^{[3]} , \ldots$,  with $\theta _{nm}^{[j]}  = \theta _{nm}^{[k]} \bmod 2\pi \omega _n$. Such 
saddle points are defined as being ``adjacent'' to $t^{(n)}$. 

Initially we chose the value of $\theta$ so that the steepest descent path $\mathscr{P}^{(n)}( \theta ;
\alpha_{n})$ in \eqref{eq1} does not encounter any of the saddle points of $f(t)$ other than $t^{\left(n\right)}$. We define
\begin{equation}\nonumber
\theta _{nm}^ +   := \min \left\{ \theta _{nm}^{[j]} :j \ge 1,\theta  < \theta _{nm}^{[j]} \right\} \;\; \text{ and } \;\;
\theta _{nm}^ -   := \max \left\{ \theta _{nm}^{[j]} :j \ge 1,\theta _{nm}^{[j]}  < \theta  \right\}.
\end{equation}
Note that $\theta _{nm}^ + = \theta _{nm}^ - +2\pi\omega_n$. Thus, in particular, $\theta$ is restricted to an interval
%%%%%%%%%%%%%%%%%%%%%%%%%%%%%%%%%%%%%%%%%%%%%%%%%%%%%%%%%%%%
\begin{equation}\label{eq3}
\theta_{nm_1}^ - < \theta  < \theta_{nm_2}^ +,
\end{equation}
%%%%%%%%%%%%%%%%%%%%%%%%%%%%%%%%%%%%%%%%%%%%%%%%%%%%%%%%%%%%
where $\theta_{nm_1}^- := \max _m \theta _{nm}^-$ and $\theta_{nm_2 }^+  := \min _m \theta_{nm}^+$. We shall suppose that $f(t)$ and $g(t)$ grow sufficiently rapidly at infinity so that the integral \eqref{eq1} converges for all values of $\theta$ in the interval \eqref{eq3}.

Let $\Gamma^{(n)}=\Gamma^{(n)}( \theta )$ be an infinite contour that encircles the path $\mathscr{P}^{(n)}( \theta ; \alpha_{n})$ in the positive direction within $\Delta^{(n)}$ (see Figure \ref{fig2}(a)).
This contour $\Gamma ^{(n)} ( \theta  )$ is now deformed by expanding it onto the boundary of $\Delta^{(n)}$. We assume that the set of saddle points which are adjacent to $t^{(n)}$ is non-empty and finite. Under this assumption, it is shown in Appendix \ref{appC} that the boundary of $\Delta^{(n)}$ can be written as a union of contours $\bigcup\nolimits_m \mathscr{P}^{(m)} (\theta _{nm}^ +  ,\alpha _{nm}^ +  ) \cup -\mathscr{P}^{(m)} (\theta _{nm}^ -  ,\alpha _{nm}^ -  )$, where $\mathscr{P}^{(m)} (\theta _{nm}^ \pm  ,\alpha _{nm}^ \pm  )$ are steepest descent paths emerging from the adjacent saddle $t^{(m)}$ (see Figure \ref{fig2}(b)). These paths are called the adjacent contours. The integers $\alpha _{nm}^ \pm $ are computed analogously to $\alpha_n$ (cf. \eqref{alpha}) as
\begin{equation}\label{alphanm}
\alpha _{nm}^ \pm   = \frac{\theta _{nm}^ \pm   + \arg (f^{(\omega _m )} (t^{(m)} )) + \omega _m \varphi ^ \pm  }{2\pi },
\end{equation}
where $-\pi  < \arg (f^{(\omega _m )} (t^{(m)} )) \le \pi$, and $\varphi^\pm$ ($-\pi < \varphi^\pm \le \pi$) is the angle of the slope of $\mathscr{P}^{(m)} (\theta _{nm}^ \pm  ,\alpha _{nm}^ \pm  )$ at the $(\omega_m-1)^{{\rm st}}$-order saddle point $t^{(m)}$ to the positive real axis. We assume initially that each adjacent contour contains only one saddle point.\footnote{This condition may be relaxed by extending the definition of integrals of the form \eqref{eq2} to include the limiting case when the steepest descents path connects to other saddle points. Also, a limiting case, such as \eqref{hyptermlimit}, has to be used for the generalised hyperterminants in the corresponding re-expansions.} The other steepest descent paths from $t^{(m)}$ are always external to the domain $\Delta^{(n)}$.

Finally, we introduce the so-called singulants $\mathcal{F}_{nm}^\pm$ (originally defined by Dingle \cite[pp. 147--149]{Dingle73}) via
%%%%%%%%%%%%%%%%%%%%%%%%%%%%%%%%%%%%%%%%%%%%%%%%%%%%%%%%%%%%
\begin{equation}\nonumber
\mathcal{F}_{nm}^\pm := |f_{m} - f_{n}|\e^{\i \arg \mathcal{F}_{nm}^\pm},\quad \arg \mathcal{F}_{nm}^\pm  = - \theta_{nm}^\pm + 2\pi \alpha _n.
\end{equation}
%%%%%%%%%%%%%%%%%%%%%%%%%%%%%%%%%%%%%%%%%%%%%%%%%%%%%%%%%%%%

%%%%%%%%%%%%%%%%%
%%% Figure 2 %%%
%%%%%%%%%%%%%%%%%
%\begin{figure}
%\centering\includegraphics[width=0.6\textwidth]{Figure2.pdf}%
%\caption{Placeholder for Sausage Contour and $\Delta$  region plot.}
%\label{fig2}
%\end{figure}
\begin{figure}
\hfill
\subfigure[ \hspace{1mm} ]{\includegraphics[width=0.40\hsize]{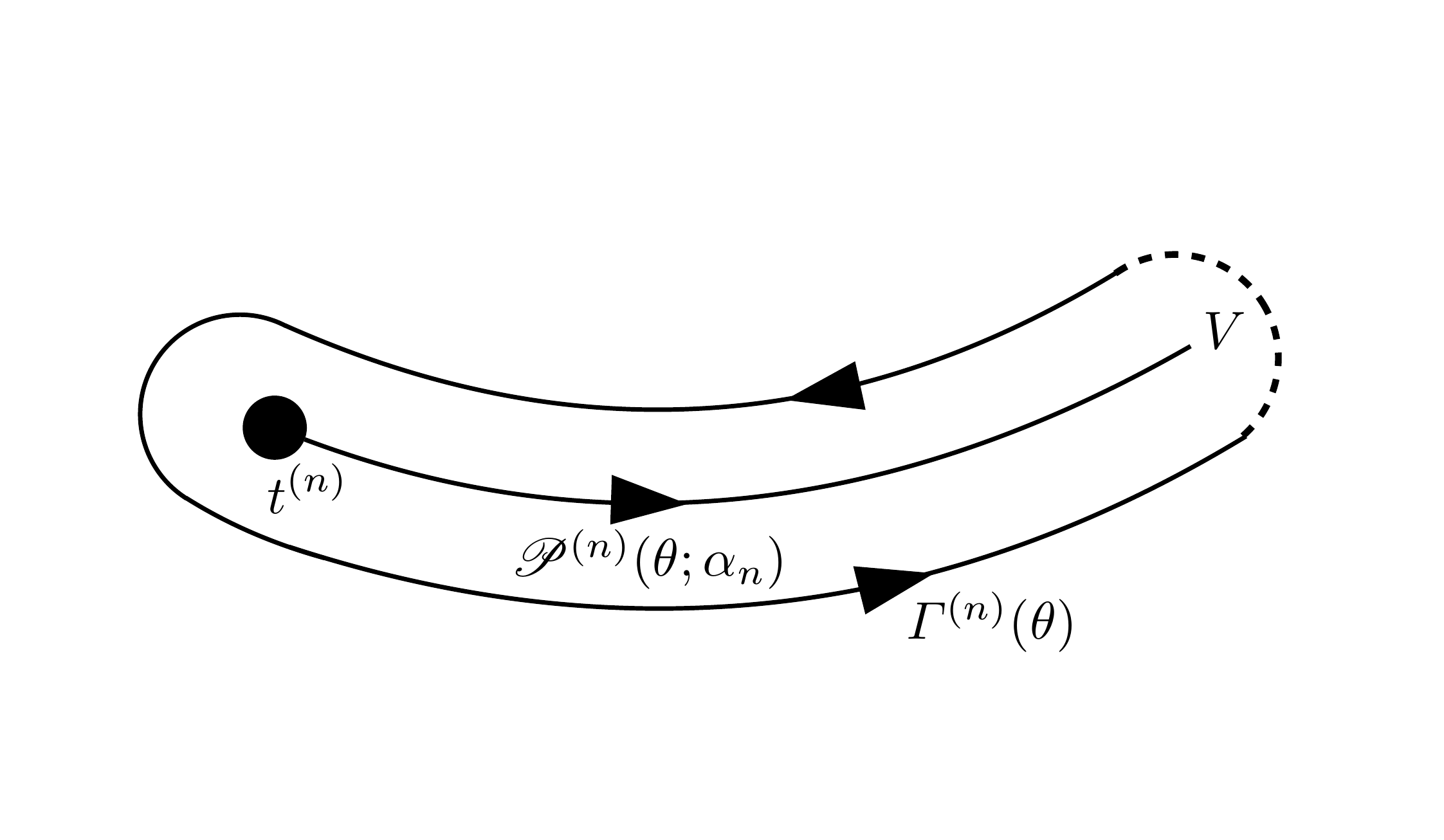}}
\hfill
\subfigure[\hspace{1mm} ]{\includegraphics[width=0.55\hsize]{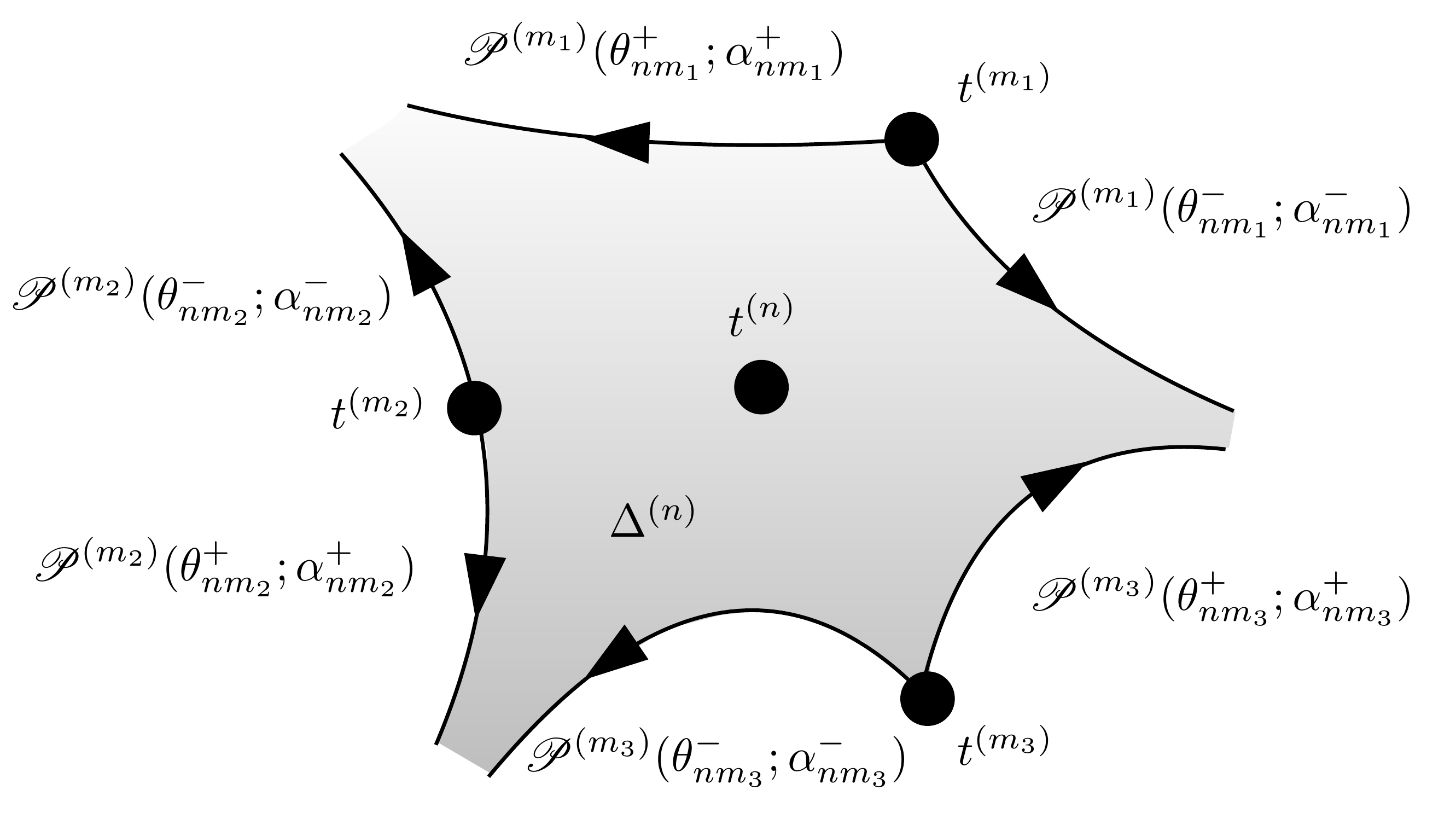}}
\hfill
\caption{Contours used in the derivation of the exact remainder terms. (a) The contour $\Gamma^{(n)}(\theta)$ relative to the integration contour $\mathscr{P}^{(n)}(\theta;\alpha_{n})$ as used in \eqref{eq6}. (b) A schematic representation of the saddle points $t^{(m_{j})}$ that are adjacent to $t^{(n)}$ and the adjacent contours $\mathscr{P}^{(m_j)}$ emanating from them in \eqref{eq12}, together with the domain $\Delta^{(n)}$.}
\label{fig2}
\end{figure}
%%%%%%%%%%%%%%%%%
%%%%%%%%%%%%%%%%%

\section{Derivation of exact remainder term}\label{sec3}

In this section we will show that
%%%%%%%%%%%%%%%%%%%%%%%%%%%%%%%%%%%%%%%%%%%%%%%%%%%%%%%%%%%%
\begin{equation}\label{FirstExpansion}
T^{(n)} (z ;\alpha_{n})= \sum\limits_{r = 0}^{N - 1} \frac{T_r^{(n)}(\alpha_{n})}{z^{r/\omega_{n}} }  + R_N^{(n)} (z;\alpha_{n} ),
\end{equation}
%%%%%%%%%%%%%%%%%%%%%%%%%%%%%%%%%%%%%%%%%%%%%%%%%%%%%%%%%%%%
where
\begin{align}
T_r^{(n)} (\alpha_{n}) = \; & \e^{\frac{2\pi \i \alpha_{n}(r+1)}{\omega_{n}}}  \frac{\Gamma \left( \frac{r + 1}{\omega_{n}} \right)}{2\pi \i}\oint_{t^{(n)}} \frac{g(t)}{\left(f(t) -f_{n}\right)^{(r + 1)/\omega_{n} }}\d t \label{eq101} \\  
= \; & \e^{\frac{2\pi \i \alpha_{n}(r+1)}{\omega_{n}}}
\left( \frac{\omega _n !}{f^{(\omega _n )} (t^{(n)} )} \right)^{(r + 1)/\omega _n } \frac{\Gamma \left( \frac{r + 1}{\omega_{n}}\right)}{\Gamma \left( r + 1\right)} \nonumber \\ & \times \left[ {\frac{{\d^r }}{{\d t^r }}\left( {g(t)\left( \frac{f^{(\omega _n )} (t^{(n)} )}{\omega _n !}\frac{\left(t - t^{\left( n \right)} \right)^{\omega_{n} }}{f(t) - f_{n}} \right)^{(r + 1)/\omega_{n} }} \right)} \right]_{t = t^{(n)} } \label{eq11},
\end{align}
and for the remainder we have
\begin{equation}\label{Step1}
R_{N}^{(n)}(z;\alpha_n)=\sum_{m(n)}\frac{z^{(1-N)/\omega_n}}{2\pi\i\omega_m} 
\int_0^{\infty\e^{\i\theta_{nm}^{+}}}
\frac{\e^{-\mathcal{F}_{nm}^+ u} u^{\frac{N+1}{\omega_n}-\frac{1}{\omega_m}-1}}{z^{1/\omega_n}-u^{1/\omega_n}}
{\bf T}^{(m)}(u;\alpha_{nm}^{+})\d u,
\end{equation}
in which $m(n)$ means that we sum over all saddles that are adjacent to $n$.
The result (\ref{Step1}) for the exact remainder term of the asymptotic expansion around the degenerate saddle $t^{(n)}$, 
expressed in terms of the adjacent (other degenerate) saddles $t^{(m)}$, is one of the main results of this paper.  

If we omit the remainder term $R_N^{(n)} (z;\alpha_{n} )$ in \eqref{FirstExpansion} and formally extend the sum to infinity, the result becomes the asymptotic expansion of an integral with $(\omega_n-1)^{\rm st}$-order endpoint (cf. \cite[eq. (1.2.16), p. 12]{Paris11}). A representation equivalent to \eqref{eq101} was given, for example, by Copson \cite[p. 69]{Copson65}. The expression \eqref{eq11} is a special case of Perron's formula (see, e.g., \cite{Nemes13}).

In the examples below we use \eqref{eq11} to compute conveniently and analytically the exact coefficients. However, we remark that \eqref{eq101} may be combined with the trapezoidal rule evaluated at periodic points on the loop contour about $t^{(n)}$ (see for example
\cite{TW14}) to give an efficient approximation for the coefficients as
\begin{equation}\label{trapezoidal}
T_r^{(n)} (\alpha_{n}) \approx  \e^{\frac{2\pi \i \alpha_{n}(r+1)}{\omega_{n}}}  \frac{\Gamma \left( \frac{r + 1}{\omega_{n}} \right)}{2M}\sum_{m=0}^{2M-1}
 \frac{g\left(t_m\right)}{w_m^r}\left(\frac{\left(t_m-t^{(n)}\right)^{\omega_n}}{f(t_m) -f_{n}}\right)^{(r + 1)/\omega_{n}},
\end{equation}
in which $t_m=t^{(n)}+w_m$ and $w_m=\rho\e^{\pi\i m/M}$. Typically this approximation converges exponentially fast with $M$. Note that in hyperasymptotics $n$ can be large and so we would need to take at least $M>n$.
%%%%%%%%%%%%%%%%%%%%%%%%%%%%%%%%%%%%%%%%%%%%%%%%%%%%%%%%%%%%

For the proof of the results above we will obtain new integral representations for $T^{(n)} (z ;\alpha_{n})$ via several changes of variables.
The local behaviour (\ref{eq4a}) of $f(t)$ at the critical point $t^{\left(n \right)}$
%\begin{equation}\label{eq4a}
%f(t) - f_n = \frac{f^{(\omega _n )} (t^{(n)} )}{\omega _n !}(t - t^{(n)} )^{\omega _n }  + \mathcal{O}(|t - t^{\left( n \right)} |^{\omega_{n}+1}),
%\end{equation}
suggests the parameterization
%%%%%%%%%%%%%%%%%%%%%%%%%%%%%%%%%%%%%%%%%%%%%%%%%%%%%%%%%%%%
\begin{equation}\label{eq4}
s^{\omega_{n}} = z(f( t ) - f_{n} )
\end{equation}
%%%%%%%%%%%%%%%%%%%%%%%%%%%%%%%%%%%%%%%%%%%%%%%%%%%%%%%%%%%%
of the integrand in \eqref{eq2} along $\mathscr{P}^{\left(n \right)} (\theta;\alpha_{n})$. Substitution of \eqref{eq4} in \eqref{eq2} yields
%%%%%%%%%%%%%%%%%%%%%%%%%%%%%%%%%%%%%%%%%%%%%%%%%%%%%%%%%%%%
\begin{gather}\label{eq5}
\begin{split}
T^{(n)} (z ;\alpha_{n}) &= \omega_{n}z^{1/\omega_{n}} \int_0^{\infty \e^{\frac{2\pi \i \alpha_{n}}{\omega_{n}}} } \e^{ - s^{\omega_{n}} } g(t)\frac{\d t}{\d s}\d s \\
&=\omega_{n}\int_0^{\infty  \e^{\frac{2\pi \i \alpha_{n}}{\omega_{n}}} } \e^{ - s^{\omega_{n} }} \frac{\omega_{n}s^{\omega_{n}-1} }{z^{1-1/\omega_{n}}}\frac{g(t(s/z^{1/\omega_{n}} ))}{f'(t(s/z^{1/\omega_{n}} ))}\d s,
\end{split}
\end{gather}
%%%%%%%%%%%%%%%%%%%%%%%%%%%%%%%%%%%%%%%%%%%%%%%%%%%%%%%%%%%%
where $t=t(s/z^{1/\omega_{n}} )$ is the unique solution of the equation \eqref{eq4} with $t(s/z^{1/\omega_{n}} ) \in \mathscr{P}^{\left(n \right)} (\theta;\alpha_{n})$. Since the contour $\mathscr{P}^{\left(n \right)} (\theta;\alpha_{n})$ does not pass through any of the saddle points of $f(t)$ other than $t^{\left(n\right)}$, the quantity
%%%%%%%%%%%%%%%%%%%%%%%%%%%%%%%%%%%%%%%%%%%%%%%%%%%%%%%%%%%%
\begin{equation}\label{eq5a}
\frac{\omega_{n}s^{\omega_{n}-1} }{z^{1-1/\omega_{n}} }\frac{g(t(s/z^{1/\omega_{n}} ))}{f'(t(s/z^{1/\omega_{n}} ))} = \frac{\omega_{n}(f(t(s/z^{1/\omega_{n}} )) - f(t^{\left( n \right)} ))^{1-1/\omega_{n}} }{f'(t(s/z^{1/\omega_{n}} ))}g(t(s/z^{1/\omega_{n}} ))
\end{equation}
%%%%%%%%%%%%%%%%%%%%%%%%%%%%%%%%%%%%%%%%%%%%%%%%%%%%%%%%%%%%
is an analytic function of $t$ in a neighbourhood of $\mathscr{P}^{(n)}( \theta;\alpha_{n})$. (We examine the analyticity of the factor $(f\left( t \right) - f_{n})^{1/\omega_{n}}$ in $\Delta^{\left(n\right)}$, after equation \eqref{eq7} below.) Whence, according to the residue theorem, the right-hand side of \eqref{eq5a} is\footnote{If $P(t)$ and $Q(t)$ are analytic in a neighbourhood of $t_0$ with $P(t_0) =0$ and $P'(t_0)\neq 0$, then $Q(t_0)/P'(t_0) = \mathop {\text{Res}}_{t = t_0} Q(t)/P(t)$.}  
%%%%%%%%%%%%%%%%%%%%%%%%%%%%%%%%%%%%%%%%%%%%%%%%%%%%%%%%%%%%
\begin{equation*}
 \mathop {\text{Res}}\limits_{t = t(s/z^{1/\omega_{n}} )} \frac{{g( t)}}{{(f(t) - f_{n})^{1/\omega_{n}}  - s/z^{1/\omega_{n}} }} = \frac{1}{2\pi \i} \oint_{t(s/z^{1/\omega_{n}} )}  \frac{g(t)}{(f(t) - f_{n})^{1/\omega_{n}}  - s/z^{1/\omega_{n}}}\d t .
\end{equation*}
%%%%%%%%%%%%%%%%%%%%%%%%%%%%%%%%%%%%%%%%%%%%%%%%%%%%%%%%%%%%
Substituting this expression into \eqref{eq5} leads to an alternative representation for the integral $T^{(n)}( z ;\alpha_{n})$ of the form
%%%%%%%%%%%%%%%%%%%%%%%%%%%%%%%%%%%%%%%%%%%%%%%%%%%%%%%%%%%%
\begin{equation}\label{eq6}
T^{(n)} ( z;\alpha_{n} ) = \int_0^{\infty \e^{\frac{2\pi \i \alpha_{n}}{\omega_{n}}} }  \e^{ - s^{\omega_{n}} } \frac{\omega_{n}}{2\pi \i}\oint_{\Gamma ^{(n )}} \frac{g(t)}{(f(t) - f_n)^{1/\omega_{n}}  - s/z^{1/\omega_{n}}} \d t \d s.
\end{equation}
%%%%%%%%%%%%%%%%%%%%%%%%%%%%%%%%%%%%%%%%%%%%%%%%%%%%%%%%%%%%
The infinite contour $\Gamma^{(n)}=\Gamma^{(n)}( \theta )$ encircles the path $\mathscr{P}^{(n)}( \theta ; \alpha_{n})$ in the positive direction within $\Delta^{(n)}$ (see Figure \ref{fig2}(a)). This integral will exist provided that $g(t)/f^{1/\omega_{n}}(t)$ decays sufficiently rapidly at infinity in $\Delta^{(n)}$. Otherwise, we can define $\Gamma^{(n)}( \theta )$ as a finite loop contour surrounding $t(s/z^{1/\omega_{n}} )$ and consider the limit
\begin{equation}\label{eq7}
\mathop {\lim }\limits_{S \to  \infty } \int_0^{ S \e^{\frac{2\pi \i \alpha_{n}}{\omega_{n}}} }  \e^{ - s^{\omega_{n}} } \frac{\omega_{n}}{2\pi \i}\oint_{\Gamma ^{(n)}} \frac{g(t)}{(f(t) - f_n)^{1/\omega_{n}}  - s/z^{1/\omega_{n}}} \d t \d s.
\end{equation}
The factor $(f(t) - f_{n})^{1/\omega_{n}}$ in \eqref{eq6} is carefully defined in the domain $\Delta^{(n)}$ as follows. First, we observe that $f(t) - f_{n}$ has an $\omega_{n}^{\rm th}$-order zero at $t=t^{(n)}$ and is non-zero elsewhere in $\Delta^{(n)}$ (because any point in $\Delta^{(n)}$, different from $t^{(n)}$, can be reached from $t^{\left(n\right)}$ by a path of descent). Second, $\mathscr{P}^{(n)}( \theta ;\alpha_{n})$ is a periodic function of $\theta$ with (least) period $2\pi \omega_{n}$. Hence, we may define the $\omega_{n}^{\rm th}$ root so that $(f(t) - f_{n})^{1/\omega_{n}}$ is a single-valued analytic function of $t$ in $\Delta^{\left(n\right)}$. The correct choice of the branch of $(f(t) - f_{n})^{1/\omega_{n}}$ is determined by the requirement that $\arg s=2\pi \alpha_{n}/\omega_{n}$ on $\mathscr{P}^{(n)} ( \theta ;\alpha_{n} )$, which can be fulfilled by setting $\arg \left[(f(t) - f_{n})^{1/\omega_{n}} \right] = (2\pi \alpha_{n}-\theta)/\omega_{n} $ for $t\in \mathscr{P}^{(n)}( \theta; \alpha_{n} )$. With any other definition of $(f(t) - f_{n})^{1/\omega_{n}}$, the representation \eqref{eq6} would be invalid.

Now, we employ the finite expression for non-negative integer $N$
%%%%%%%%%%%%%%%%%%%%%%%%%%%%%%%%%%%%%%%%%%%%%%%%%%%%%%%%%%%%
\[
\frac{1}{1 - x} = \sum\limits_{r = 0}^{N - 1} {x^r }  + \frac{x^N}{1 - x},\qquad x \ne 1,
\]
%%%%%%%%%%%%%%%%%%%%%%%%%%%%%%%%%%%%%%%%%%%%%%%%%%%%%%%%%%%%
to expand the denominator in \eqref{eq6} in powers of $s/[z(f( t ) - f_{n})]^{1/\omega_{n}}$. We thus obtain
%%%%%%%%%%%%%%%%%%%%%%%%%%%%%%%%%%%%%%%%%%%%%%%%%%%%%%%%%%%%
\begin{align*}
T^{(n)} ( z ;\alpha_{n} ) = &\sum\limits_{r = 0}^{N - 1} \frac{1}{z^{ r/\omega_{n}}}\int_0^{\infty \e^{\frac{2\pi \i \alpha_{n}}{\omega_{n}}}}   \e^{ - s^{\omega_{n}} } s^r \frac{\omega_{n}}{{2\pi \i}}\oint_{\Gamma^{( n)} } \frac{g( t )}{(f(t) - f_{n} )^{ (r + 1)/\omega_{n}} }\d t \d s   \\
&+ R_N^{( n)} ( z ;\alpha_{n})
\end{align*}
%%%%%%%%%%%%%%%%%%%%%%%%%%%%%%%%%%%%%%%%%%%%%%%%%%%%%%%%%%%%
with
%%%%%%%%%%%%%%%%%%%%%%%%%%%%%%%%%%%%%%%%%%%%%%%%%%%%%%%%%%%%
\begin{gather}\label{eq8}
\begin{split}
R_N^{(n)} (z;\alpha_{n} ) =\; & \frac{\omega_{n}}{2\pi \i z^{N/\omega_{n} }}  \int_0^{\infty \e^{\frac{2\pi \i \alpha_{n}}{\omega_{n}}} } \e^{ - s^{\omega_{n}} } s^N \\
&\times\oint_{\Gamma ^{(n)}} \frac{g(t)}{(f(t) - f_{n})^{(N + 1)/\omega_{n}}}\frac{\d t}{1 - \frac{s}{\left(z(f(t) - f_{n})\right)^{1/\omega_{n}}}} \d s .
\end{split}
\end{gather}
%%%%%%%%%%%%%%%%%%%%%%%%%%%%%%%%%%%%%%%%%%%%%%%%%%%%%%%%%%%%
Again, a limiting process is used in \eqref{eq8} if necessary. Throughout this work, if not stated otherwise, empty sums are taken to be zero.

For each term in the finite sum, the contour $\Gamma^{(n)}( \theta )$ can be shrunk into a small positively-oriented circle with centre $t^{(n)}$ and radius $\rho$, and we arrive at \eqref{FirstExpansion},
where the coefficients are given by \eqref{eq101} and \eqref{eq11}.

By expanding $\Gamma^{(n)}(\theta)$ to the boundary of $\Delta^{(n)}$ (see Section \ref{sec2}), we obtain
\begin{gather}\label{eq12}
\begin{split}
&R_N^{(n)} ( z; \alpha_{n}) =\frac{\omega_{n}}{2\pi \i z^{ N /\omega_{n}} }\sum_{m(n)} \int_0^{\infty \e^{\frac{2\pi \i\alpha_{n}}{\omega_{n}}}}  \e^{ - s^{\omega_{n} }} s^N \\
&\qquad\qquad \times\left(
\int_{\mathscr{P}^{(m)} (\theta _{nm}^ +  ,\alpha _{nm}^ +  )}  \frac{g(t)}{(f(t) - f_{n})^{ (N + 1)/\omega_{n}} }\frac{\d t}{1  -  \frac{s}{\left(z(f( t ) - f_{n})\right)^{ 1/\omega_{n}}} }\right.\\
 & \qquad\qquad\qquad\left.-\int_{\mathscr{P}^{(m)} (\theta _{nm}^ -  ,\alpha _{nm}^ -  )}  \frac{g(t)}{(f(t) - f_{n})^{ (N + 1)/\omega_{n}} }\frac{\d t}{1  -  \frac{s}{\left(z(f( t ) - f_{n})\right)^{ 1/\omega_{n}}} } \right)\d s.
\end{split}
\end{gather}
%%%%%%%%%%%%%%%%%%%%%%%%%%%%%%%%%%%%%%%%%%%%%%%%%%%%%%%%%%%%

%The exponents $\gamma_{km}$ are the orientation anomalies; they take the value $0$ if the sense of the deformed $\Gamma^{\left(k\right)}\left( \theta \right)$ and the chosen orientation of the relevant $\mathscr{C}^{\left( m \right)} \left( { - \sigma _{km} } \right)$ are identical, $1$ otherwise. 

The expansion process is justified provided that (i) $f(t)$ and $g(t)$ are analytic in the domain $\Delta^{\left(n\right)}$, (ii) the quantity $g(t)/f^{(N + 1)/\omega_{n}}(t)$ decays sufficiently rapidly at infinity in $\Delta^{(n)}$, and (iii) there are no zeros of the denominator $1 - s/[z(f(t) - f_{n})]^{ 1/\omega_{n}}$ within the region $R$ through which the loop $\Gamma^{(n)}(\theta)$ is deformed. 

The first condition is already satisfied by prior assumption. The second condition is met by requiring that $g(t)/f^{(N + 1)/\omega_{n}}( t) = o(1/\left|t\right|)$ as $t\to \infty$ in $\Delta^{\left(n \right)}$ which we shall assume to be the case. The third condition is satisfied according to the following argument. The zeros of the denominator are those points of the $t$-plane for which $\arg\left[\e^{\i\theta } ( f( t) - f_n)\right]=2\pi \alpha_n$, in particular the points of the path $\mathscr{P}^{(n)} ( \theta  ; \alpha_n)$. Furthermore, no components of the set defined by the equation $\arg\left[\e^{\i\theta } ( f( t) - f_n)\right]=2\pi \alpha_n$ other than $\mathscr{P}^{(n)} ( \theta; \alpha_n)$ can lie within $\Delta^{(n)}$, otherwise $f(t)$ would have branch points along those components. By observing that $\mathscr{P}^{(n)} ( \theta; \alpha_n)$ is different for different values of $\theta \bmod 2\pi\omega_n$, we see that the locus of the zeros of the denominator $1 - s/[z(f(t) - f_n)]^{1/\omega_n}$ inside $\Delta^{(n)}$ is precisely the contour $\mathscr{P}^{(n)} ( \theta; \alpha_n)$, which is wholly contained within $\Gamma^{(n)}( \theta)$ and so these zeros are external to $R$.

We now consider the convergence of the double integrals in \eqref{eq12} further. To do this, we change variables from $t$ to $v$ by
%%%%%%%%%%%%%%%%%%%%%%%%%%%%%%%%%%%%%%%%%%%%%%%%%%%%%%%%%%%%
\begin{equation}\label{eq13}
f( t ) - f_{n}= v \e^{(- \theta_{nm}^\pm + 2\pi \alpha _n) \i} ,
\end{equation}
%%%%%%%%%%%%%%%%%%%%%%%%%%%%%%%%%%%%%%%%%%%%%%%%%%%%%%%%%%%%
where $v \geq \left|\mathcal{F}_{nm}^\pm\right|$. Since $\e^{ (\theta_{nm}^\pm - 2\pi \alpha _n) \i} (f( t) - f_{n})$ is a monotonic function of $t$ on the contour $\mathscr{P}^{(m)} (\theta _{nm}^\pm  ,\alpha _{nm}^\pm )$, corresponding to each value of $v$, there is a value of $t$, say $t_{\pm}\left(v\right)$, that satisfies \eqref{eq13}. The assumption \eqref{eq3} implies that the factor $\left[ 1 - s/[z(f(t) - f_n)]^{1/\omega_{n}} \right]^{ - 1}$ in \eqref{eq12} is bounded above by a constant. Hence, the convergence of the double integrals in \eqref{eq12} will be assured provided the real double integrals
%%%%%%%%%%%%%%%%%%%%%%%%%%%%%%%%%%%%%%%%%%%%%%%%%%%%%%%%%%%%
\[
\int_0^{\infty } \int_{\left| {\mathcal{F}_{nm}^\pm } \right|}^{\infty } \frac{ \e^{ - |s|^{\omega_{n}}} |s|^N}{v^{(N + 1)/\omega_{n}}}\left| \frac{g( t_ \pm  (v))}{f'(t_\pm (v))} \right|\d v \d|s|
\]
%%%%%%%%%%%%%%%%%%%%%%%%%%%%%%%%%%%%%%%%%%%%%%%%%%%%%%%%%%%%
exist. In turn, these real double integrals will exist if and only if the single integrals  
%%%%%%%%%%%%%%%%%%%%%%%%%%%%%%%%%%%%%%%%%%%%%%%%%%%%%%%%%%%%
\begin{equation}\label{eq14}
\int_{\left| {\mathcal{F}_{nm}^\pm } \right|}^{\infty } \frac{1}{v^{(N + 1)/\omega_{n}}}\left| \frac{g(t_\pm(v))}{f'(t_\pm (v))} \right|\d v 
\end{equation}
%%%%%%%%%%%%%%%%%%%%%%%%%%%%%%%%%%%%%%%%%%%%%%%%%%%%%%%%%%%%
exist. Henceforth, we assume that the integrals in \eqref{eq14} exist for each of the adjacent contours.

On each of the contours $\mathscr{P}^{(m)} (\theta _{nm}^ \pm  ,\alpha _{nm}^ \pm  )$ in \eqref{eq12}, we perform the change of variable from $s$ and $t$ to $u$ and $t$ via
%%%%%%%%%%%%%%%%%%%%%%%%%%%%%%%%%%%%%%%%%%%%%%%%%%%%%%%%%%%%
\[
s^{\omega_{n}}  = u(f( t ) - f_{n}) = \mathcal{F}_{nm}^\pm u + u (f( t ) - f_{m})
\]
%%%%%%%%%%%%%%%%%%%%%%%%%%%%%%%%%%%%%%%%%%%%%%%%%%%%%%%%%%%%
to obtain
%%%%%%%%%%%%%%%%%%%%%%%%%%%%%%%%%%%%%%%%%%%%%%%%%%%%%%%%%%%%
\begin{gather}\label{eq14c}
\begin{split}
&R_N^{(n)} ( z;\alpha_{n} ) =  \sum_{m(n)} \frac{z^{(1-N)/\omega_n}}{2\pi \i} \\
&\quad\times\left(  
\int_0^{\infty \e^{\i \theta _{nm}^+ } } \frac{ \e^{ - \mathcal{F}_{nm}^+ u} u^{\frac{{N + 1}}{{\omega _n }} - 1}}{z^{1/\omega_n} - u^{1/\omega_n}}
\int_{\mathscr{P}^{(m)} (\theta _{nm}^+  ,\alpha _{nm}^+  ) } \e^{ - u(f(t) - f_m )} g(t) \d t \d u\right.\\
&\qquad\quad- \left.\int_0^{\infty \e^{\i \theta _{nm}^-} } \frac{ \e^{ - \mathcal{F}_{nm}^- u} u^{\frac{{N + 1}}{{\omega _n }} - 1}}{z^{1/\omega_n} - u^{1/\omega_n}}
\int_{\mathscr{P}^{(m)} (\theta _{nm}^-  ,\alpha _{nm}^-  ) } \e^{ - u(f(t) - f_m )} g(t)\d t \d u\right).
\end{split}
\end{gather}
%%%%%%%%%%%%%%%%%%%%%%%%%%%%%%%%%%%%%%%%%%%%%%%%%%%%%%%%%%%%
This change of variable is permitted because the infinite double integrals in \eqref{eq12} are assumed to be absolutely convergent, which is 
a consequence of the requirement that the integrals \eqref{eq14} exist. Hence the exact remainder of the expansion \eqref{FirstExpansion}
about the critical point $t^{(n)}$ is expressible in terms of similar integrals over infinite contours emanating from the adjacent saddles $t^{(m)}
$ as
%%%%%%%%%%%%%%%%%%%%%%%%%%%%%%%%%%%%%%%%%%%%%%%%%%%%%%%%%%%%
\begin{gather}\label{eq14b}
\begin{split}
R_N^{(n)} ( z;\alpha_{n} ) =  &\sum_{m(n)} \frac{z^{(1-N)/\omega_n}}{2\pi \i\omega_m} 
\left(\int_0^{\infty \e^{\i\theta _{nm}^+} } \frac{ \e^{ - \mathcal{F}_{nm}^+ u} u^{\frac{N+1}{{\omega _n }} -\frac{1}{{\omega _m }}  - 1}}{z^{1/\omega_n} - u^{1/\omega_n}}T^{(m)} (u;\alpha _{nm}^+ )\d u\right.\\
&\qquad\qquad\quad -\left.\int_0^{\infty \e^{\i\theta _{nm}^-} } \frac{\e^{ - \mathcal{F}_{nm}^- u} u^{\frac{N+1}{{\omega _n }} -\frac{1}{{\omega _m }}  - 1}}{z^{1/\omega_n} - u^{1/\omega_n}}T^{(m)} (u;\alpha _{nm}^- )\d u\right).
\end{split}
\end{gather}
%%%%%%%%%%%%%%%%%%%%%%%%%%%%%%%%%%%%%%%%%%%%%%%%%%%%%%%%%%%%
Since $\theta _{nm}^ + = \theta _{nm}^ - +2\pi\omega_n$, a simple change of integration variable in \eqref{eq14c} then yields
\begin{gather}\label{eq14d}
\begin{split}
&R_N^{(n)} ( z;\alpha_{n} ) = \sum_{m(n)} \frac{z^{(1-N)/\omega_n}}{2\pi \i}  \\
&\qquad\times\left(\int_0^{\infty \e^{\i \theta _{nm}^+ } } \frac{ \e^{ - \mathcal{F}_{nm}^+ u} u^{\frac{N + 1}{\omega _n } - 1}}{z^{1/\omega_n} - u^{1/\omega_n}}
\int_{\mathscr{P}^{(m)} (\theta _{nm}^+  ,\alpha _{nm}^+  ) } \e^{ - u(f(t) - f_m )} g(t) \d t \d u\right.\\
&\qquad\qquad\left. - \int_0^{\infty \e^{\i \theta _{nm}^+} } \frac{ \e^{ - \mathcal{F}_{nm}^+ u} u^{\frac{{N + 1}}{{\omega _n }} - 1}}{z^{1/\omega_n} - u^{1/\omega_n}}
\int_{\mathscr{P}^{(m)} (\theta _{nm}^+  ,\beta_{nm} ) } \e^{ - u(f(t) - f_m )} g(t)\d t \d u \right).
\end{split}
\end{gather}
The path $\mathscr{P}^{(m)} (\theta _{nm}^+  ,\beta_{nm} )$ is geometrically identical to $\mathscr{P}^{(m)} (\theta _{nm}^-  ,\alpha _{nm}^-  )$, and since the angle of the slope of $\mathscr{P}^{(m)} (\theta _{nm}^-  ,\alpha _{nm}^-  )$ to the positive real axis at $t^{(m)}$ is $2\pi/\omega_m$ higher than the corresponding angle of $\mathscr{P}^{(m)} (\theta _{nm}^+  ,\alpha _{nm}^+  )$, we find (cf. \eqref{alphanm})
\begin{align*}
\beta _{nm}  &= \frac{{\theta _{nm}^ +   + \arg (f^{(\omega _m )} (t^{(m)} )) + \omega _m (\varphi ^ +   + 2\pi /\omega _m )}}{{2\pi }} \\
&= \frac{{\theta _{nm}^ +   + \arg (f^{(\omega _m )} (t^{(m)} )) + \omega _m \varphi ^ +  }}{{2\pi }} + 1 ~~= \alpha _{nm}^ +   + 1.
\end{align*}
%For the single dimensional integrals that we consider here, the Stokes constants $K_{nm}$ may be computed in two ways.  First, we may identify the adjacent saddles using a geometric deformation of the steepest paths by varying $\theta$ and observing which of the saddles may be reached from $n$ by a steepest path.  The non-adjacent saddles will have $K_{nm}=0$.  The adjacent saddles will have $K_{nm}=\pm1$, the sign depending on the relative orientations of the contour $\Gamma^{(n)}(\theta)$ and the $\mathscr{C}^{\left(m\right)}$.  Alternatively an algebraic approach may be taken using a hyperasymptotic technique.  We outline the hyperasymptotic re-expansion scheme for these integrals in Section \ref{sec4}.

With the notation in \eqref{double}, integral representation \eqref{eq14d} can be written as \eqref{Step1}.
The observation that
\begin{equation}\label{Step2}
R_{N}^{(n)}(z;\alpha_n+1)=\sum_{m(n)}\frac{z^{(1-N)/\omega_n}}{2\pi\i\omega_m}\!\int_0^{\infty\e^{\i(\theta_{nm}^{+}+2\pi)}}
\!\!\frac{\e^{-\mathcal{F}_{nm}^+ u} u^{\frac{N+1}{\omega_n}-\frac{1}{\omega_m}-1}}{z^{1/\omega_n}-u^{1/\omega_n}}
{\bf T}^{(m)}(u;\alpha_{nm}^{+}+1)\d u,
\end{equation}
will also be useful.

In previous publications  \cite{BH91,Howls97} there were issues with the exact sign of the terms on the right-hand side of \eqref{Step1}.  
These were referred to as ``orientation anomalies''.  Here we do not encounter these issues because of the careful definitions of the 
phases on the contours (\ref{alpha}), (\ref{alphanm}).

\section{Hyperasymptotic iteration of the exact remainder}\label{sec4}

In this section we re-expand the exact remainder terms (\ref{Step1}) and (\ref{Step2}) to derive a template for hyperasymptotic calculations.

First, we begin by defining a set of universal, but generalised, hyperterminant functions ${\bf F}^{(j)}$, that form the basis of the template.

Let us introduce the notation $\displaystyle \int_0^{[\eta]}=\int_0^{\infty \e^{\i\eta}}$. Then, for $k$ a non-negative integer, we define
\begin{align}
& \HypTermZero{z} := 1,\qquad\qquad
\HypTermOne{z}{M_0}{\omega_0}{\sigma _0} 
:= \int_0^{\left[ {\pi  - \arg \sigma _0 } \right]} \frac{\e^{\sigma _0 t_0 } t_0^{M_0  - 1} }{z^{1/\omega _0 }  - t_0^{1/\omega _0 } }\d t_0,\nonumber\\
& \HypTermK{k+1}{z}{%
\begin{array}{c}
   {M_0 ,}  \\
   {\omega _0 ,}  \\
   {\sigma _0  ,}  \\
\end{array}\begin{array}{c}
   { \ldots ,}  \\
   { \ldots ,}  \\
   { \ldots ,}  \\
\end{array}\begin{array}{c}
   {M_k }  \\
   {\omega _k }  \\
   {\sigma _k }  \\
\end{array}}\label{defhyperterminant}\\
&:= \int_0^{\left[ {\pi  - \arg \sigma _0 } \right]}\!\!\!\!\!\!\cdots \int_0^{\left[ {\pi  - \arg \sigma _k } \right]} \frac{\e^{\sigma _0 t_0  +  \cdots  + \sigma _k t_k } t_0^{M_0  - 1}  \cdots t_k^{M_k  - 1} }{(z^{1/\omega_0 }  - t_0^{1/\omega_0 } )(t_0^{1/\omega_1 }  - t_1^{1/\omega_1 } ) \cdots (t_{k - 1}^{1/\omega_{k} }  - t_k^{1/\omega_k } )}\d t_k  \cdots \d t_0,\nonumber
\end{align}
for arbitrary sets of complex numbers $M_0,\ldots,M_k$ and $\sigma_0,\ldots,\sigma_k$ such that $\mathop{\rm Re}(M_j)>1/\omega_j$ and $\sigma_j \neq 0$ for $j=0,\ldots,k$, and for an arbitrary set of positive integers $\omega_0,\ldots,\omega_k$. The multiple integrals converge when $|\arg(\sigma_0 z)|<\pi\omega_0$. The ${\bf F}^{(j)}$ is termed a ``generalised $j^{\rm th}$-level hyperterminant''. If $\omega_0=\cdots=\omega_{j-1}=1$, ${\bf F}^{(j)}$ reduces to the much simpler $j^{\rm th}$-level hyperterminant $F^{(j)}$ discussed in the paper \cite{OD98c}.

Note that in the case that two successive $\sigma$'s have the same phase the choice of integration path over the poles in \eqref{defhyperterminant} needs to be defined more carefully. In those cases we can define the hyperterminant via a limit. For example
\begin{equation}\label{hyptermlimit}
%\HypTermK{k+1}{z}{\begin{array}{c}{M_0 , \ldots ,M_k}\\ {\omega_0 , \ldots ,\omega_k}\\ {\sigma _0 , \ldots ,\sigma _k}\\ \end{array}}:=
\lim_{\varepsilon\to0^+}\HypTermK{k+1}{z}{%
\begin{array}{c}
   {M_0 ,}  \\
   {\omega _0 ,}  \\
   {\sigma _0 \e^{ - k\varepsilon \i} ,}  \\
\end{array}\begin{array}{c}
   {M_1 ,}  \\
   {\omega _1 ,}  \\
   {\sigma _1 \e^{ - (k - 1)\varepsilon \i} ,}  \\
\end{array}\begin{array}{c}
   { \ldots ,}  \\
   { \ldots ,}  \\
   { \ldots ,}  \\
\end{array}\begin{array}{c}
   {M_{k - 1} ,}  \\
   {\omega _{k - 1} ,}  \\
   {\sigma _{k - 1} \e^{ - \varepsilon \i} ,}  \\
\end{array}\begin{array}{c}
   {M_k }  \\
   {\omega _k }  \\
   {\sigma _k }  \\
\end{array}}
\end{equation}
is an option. Other limits are also possible.

The efficient computation of these generalised hyperterminant functions is outlined in Appendix \ref{appA}.

\subsection{Superasymptotics and optimal number of terms}\label{optimalSect}

A necessary step in hyperasymptotic re-expansions is to determine the ``optimal'' number of terms in the original Poincar\'e expansion \eqref{FirstExpansion}, defined as the index of the least term in magnitude.

For this section it reasonable to denote the original number of terms in the truncated asymptotic expansion as $N=N_0^{(n)}$ and we denote the associated remainder as $R_{0}^{(n)}(z;\alpha_n)$.
With this notation the integrands in \eqref{Step1} will have a factor $u^{N_0^{(n)}/\omega_n}$. Therefore, 
when $N_0^{(n)}$ is large, the main contribution to the integrals in \eqref{Step1} comes from infinity where ${\bf T}^{(m)}(u;\alpha_{nm}^{+})=\mathcal{O}(1)$. In the case that $z$ and $u$ are collinear, i.e., 
on a Stokes line, it is well known (see, e.g.,  \cite[\S8]{Boyd93} or \cite[\S5]{OO94}) that the Stokes phenomenon produces 
an extra factor of $\mathcal{O}\left( \sqrt {N_0^{(n)}}\right)$ when estimating 
$R_{0}^{(n)}(z;\alpha_n)$ (see also the proof of Proposition \ref{hyperbound}). Thus, we have
\[
R_0^{(n)} (z;\alpha _n ) = \sqrt {N_0^{(n)}} \frac{\Gamma \left( \frac{N_0^{(n)} + 1}{\omega _n } \right)}{\left| z \right|^{\frac{N_0^{(n)}}{\omega _n }} }
\sum\limits_{m(n)} \frac{1}{|\mathcal{F}_{nm}^ +|^{\frac{N_0^{(n)}}{\omega _n }} \left(N_0^{(n)}\right)^{\frac{1}{\omega _m }} } \mathcal{O}(1),
\]
for large $N_0^{(n)}$ and $\theta_{nm_1}^ - \leq \theta \leq \theta_{nm_2}^ +$. Let $N_0^{(n)} = \eta_0^{(n)} \omega _n \left| z \right|+\nu_0^{(n)}$ with $\nu_0^{(n)}$ being bounded. Then, with the help of Stirling's formula,
\begin{equation}\label{superasymp}
R_0^{(n)} (z;\alpha _n ) = \e^{ - \eta _0^{(n)} |z|} \sum\limits_{m(n)} \left|z\right|^{\frac{1}{\omega _n}-\frac{1}{\omega _m}} \left( \frac{\eta _0^{(n)}}{\left|\mathcal{F}_{nm}^+\right|} \right)^{\eta _0^{(n)} |z|} \mathcal{O}(1),
\end{equation}
as $|z|\to\infty$ in the sector $\theta_{nm_1}^ - \leq \theta \leq \theta_{nm_2}^ +$. For a fixed $m$ the magnitude of the right-hand side of \eqref{superasymp} is minimal in the case that $\eta_0^{(n)}=\left|\mathcal{F}_{nm}^+\right|$.
Since we sum over all the adjacent saddles we obtain that for the optimal number of terms we have $\eta_0^{(n)}=r_0^{(n)}:=\min_{m(n)}\left|\mathcal{F}_{nm}^+\right|$, and with that choice we have
\begin{equation}\label{superasymp2}
R_0^{(n)} (z;\alpha _n ) =\e^{ -r_0^{(n)} |z|} \left|z\right|^{\frac{1}{\omega _n}-\frac{1}{\widetilde\omega}} \mathcal{O}(1),
\end{equation}
as $|z|\to\infty$ in the sector $\theta_{nm_1}^ - \leq \theta \leq \theta_{nm_2}^ +$ with $\widetilde\omega=\max_j\omega_j$. 
%(Note that if $\min_{m(n)}\left|\mathcal{F}_{nm}^+\right|$ is not unique, then oscillations in the size of the terms may occur but this does not change the final choice of $N_0^{(n)}$.)

In the hyperasymptotic process below, we will re-expand this remainder and each of these re-expansions will be truncated and re-expanded 
and so on.  Correspondingly we have to determine the number of terms to take in the original expansion $N_0^{(n)}$, in the first re-expansions
$N_1^{(m)}$, and so on. The criterion for determining the ``optimal'' $N_0^{(n)}$, $N_1^{(m)}$, \dots, is that the overall error 
obtained by summing all the contributing expansions should be minimised. This may be determined from considering estimates such as 
\eqref{superasymp} and \eqref{Step4a}, \eqref{Step5a} below. The procedure for determining these optimal numbers of terms is very similar 
to that of \cite{OD98b}, and may be summarised as follows.

%For the optimal number of terms in the hyperasymptotic expansions we introduce the following.
Let $G=(V,E)$ be a graph with for the vertices $V$ all the $f_j$ and for the edges $E=\left\{ (f_m,f_n) : t^{(m)} {\rm ~is~adjacent~to~} t^{(n)}\right\}$.
We define $r^{(n)}_k$ to be the length of the shortest path of $k+1$ steps in this graph starting at $t^{(n)}$. For a hyperasymptotic expansion of Level $k$ the optimal number of terms is
\begin{equation}\label{optimal}
N_0^{(m_0)}=\eta_0^{(m_0)}\omega_{m_0}|z|+\nu_0^{(m_0)},\qquad \ldots,\qquad
N_k^{(m_k)}=\eta_k^{(m_k)}\omega_{m_k}|z|+\nu_k^{(m_k)},
\end{equation}
with $m_0=n$, in which
$$\eta_0^{(m_0)} :=r^{(m_0)}_{k},\qquad \eta_j^{(m_j)}:=\max\left(0,\eta_{j-1}^{(m_{j-1})}-|\mathcal{F}_{m_{j-1}m_j}|\right),\qquad j=1,\ldots,k,$$
and the $\nu_j$ are all bounded as $|z|\to\infty$, with estimate 
\begin{equation}\label{superasymp3}
R_k^{(n)} (z;\alpha _n ) =\e^{ -r_k^{(n)} |z|} \left|z\right|^{\frac{1}{\omega _n}-\frac{1}{\widetilde\omega}} \mathcal{O}(1),
\end{equation}
for the remainder
as $|z|\to\infty$ in the sector $\theta_{nm_1}^ - \leq \theta \leq \theta_{nm_2}^ +$. The main difference from the results in \cite{OD98b} is that 
here in \eqref{optimal} we have the extra factors $\omega_j$.

\subsection{Level 1 hyperasymptotics}

We now derive the Level 1 hyperasymptotic expansion. In the integral representation \eqref{Step1} for this remainder we substitute 
\eqref{FirstExpansion} into the ${\bf T}^{(m)}$ function. We obtain the re-expansion
\begin{gather}\begin{split}\label{Step3}
R_{0}^{(n)}(z;\alpha_n)=&
\sum_{m(n)}\frac{z^{(1-N_0^{(n)})/\omega_n}}{2\pi\i\omega_m} \sum_{r=0}^{N_1^{(m)}-1}{\bf T}_r^{(m)}(\alpha_{nm}^{+})
\HypTermOne{z}{\frac{N_0^{(n)}+1}{\omega_n}-\frac{r+1}{\omega_m}}{\omega_n}{|\mathcal{F}_{nm}^+|\e^{\i(\pi-\theta_{nm}^{+})}}\\\
&+R_{1}^{(n)}(z;\alpha_n).
\end{split}\end{gather}
The remainder $R_{1}^{(n)}(z;\alpha_n)$ depends on the number of terms $N_0^{(n)}$ and $N_1^{(m)}$ and can be represented as
\begin{gather}\begin{split}\label{Step4}
&R_{1}^{(n)}(z;\alpha_n)=\sum_{m(n)}\sum_{\ell(m)}\frac{z^{(1-N_0^{(n)})/\omega_n}}{\left(2\pi\i\right)^2\omega_m\omega_\ell} \\
&\quad\times\Bigg(
\int_0^{\infty\e^{\i\theta_{nm}^{+}}}\int_0^{\infty\e^{\i\theta_{nm\ell}^{+}}}
\frac{\e^{-\mathcal{F}_{nm}^+ u-\mathcal{F}_{m\ell}^+ v} u^{\frac{N_0^{(n)}+1}{\omega_n}-\frac{N_1^{(m)}}{\omega_m}-1}
v^{\frac{N_1^{(m)}+1}{\omega_m}-\frac{1}{\omega_\ell}-1}}{\left(z^{1/\omega_n}-u^{1/\omega_n}\right)\left(u^{1/\omega_m}-v^{1/\omega_m}\right)}\\
&\qquad\qquad\qquad\qquad\qquad\qquad\times{\bf T}^{(\ell)}(v;\alpha_{nm\ell}^{+})\d v\d u\\
&\quad\qquad-\int_0^{\infty\e^{\i\theta_{nm}^{+}}}\int_0^{\infty\e^{\i\theta_{nm\ell}^{+}+2\pi\i}}
\frac{\e^{-\mathcal{F}_{nm}^+ u-\mathcal{F}_{m\ell}^+ v} u^{\frac{N_0^{(n)}+1}{\omega_n}-\frac{N_1^{(m)}}{\omega_m}-1}
v^{\frac{N_1^{(m)}+1}{\omega_m}-\frac{1}{\omega_\ell}-1}}{\left(z^{1/\omega_n}-u^{1/\omega_n}\right)\left(u^{1/\omega_m}-v^{1/\omega_m}\right)}\\
&\qquad\qquad\qquad\qquad\qquad\qquad\times{\bf T}^{(\ell)}(v;\alpha_{nm\ell}^{+}+1)\d v\d u\Bigg),
\end{split}\end{gather}
in which $\theta_{nm\ell}^{+}(\theta_{nm}^{+})$ corresponds to the path $\mathscr{P}^{\left(n\right)}( \theta_{nm}^{+} ;\alpha_{nm}^{+})$ and is defined similarly as $\theta_{nm}^{+}=\theta_{nm}^{+}(\theta)$. The $\alpha_{nm\ell}^{+}$ is the corresponding $\alpha_{nm}^{+}$, which is defined \eqref{alphanm}. In this derivation we have used the observation \eqref{Step2}.

We can estimate the remainder $R_1^{(n)} (z;\alpha _n ) $ in a similar way as we did $R_0^{(n)} (z;\alpha _n ) $, and one finds
\begin{align*}
R_1^{(n)} (z;\alpha _n ) = &\frac{1}{{|z|^{\frac{{N_0^{(n)} }}{{\omega _n }}} }}  \sum_{m(n)} \sqrt {\left( {N_0^{(n)}  - N_1^{(m)} } \right)N_1^{(m)} } \\ 
& \times \frac{{\Gamma \left( {\frac{{N_0^{(n)}  + 1}}{{\omega _n }} - \frac{{N_1^{(m)}  + 1}}{{\omega _m }}} \right)\Gamma \left( {\frac{{N_1^{(m)}  + 1}}{{\omega _m }}} \right)}}{%
{|\mathcal{F}_{nm}^ +  |^{\frac{{N_0^{(n)} }}{{\omega _n }} - \frac{{N_1^{(m)} }}{{\omega _m }}} }} 
\sum_{\ell (m)} {\frac{1}{{|\mathcal{F}_{m\ell }^ +  |^{\frac{{N_1^{(m)} }}{{\omega _m }}} \left( {N_1^{(m)} } \right)^{\frac{1}{{\omega _\ell  }}} }}}  \mathcal{O}(1) .
\end{align*}
Then
\begin{gather}\begin{split}\label{Step4a}
R_1^{(n)} (z;\alpha _n ) =  \e^{ - \eta _0^{(n)} |z|} &\sum_{m(n)} \left( {\frac{{\eta _0^{(n)}  - \eta _1^{(m)} }}{{|\mathcal{F}_{nm}^ +|}}} \right)^{(\eta _0^{(n)}  - \eta _1^{(m)} )|z|} \\
&\times \sum_{\ell (m)} \left|z\right|^{\frac{1}{\omega _n}-\frac{1}{\omega _\ell}} \left( {\frac{{\eta _1^{(m)} }}{{|\mathcal{F}_{m\ell }^ +|}}} \right)^{\eta _1^{(m)} \left| z \right|} \mathcal{O}(1),
\end{split}\end{gather}
as $|z|\to \infty$ in the sector $\theta_{nm_1}^ - \leq \theta \leq \theta_{nm_2}^ +$. For fixed $m$ and $\ell$, using a similar approach to 
Subsection \ref{optimalSect} above, it is easy to show that the optimal number of terms is obtained when 
$\eta _0^{(n)}  - \eta _1^{(m)}=\left|\mathcal{F}_{nm}^ +\right|$ and $\eta _1^{(m)}=\left|\mathcal{F}_{m\ell }^ +\right|$. 

Rigorous bounds for Level 1 hyperterminants are derived in Appendix \ref{appB}.

\subsection{Level 2 hyperasymptotics}

The Level 2 hyperasymptotic expansion is now derived by re-expanding the Level 1 expansion.  Again we substitute \eqref{FirstExpansion} 
into the ${\bf T}^{(\ell)}$ 
functions on the right-hand side of \eqref{Step4} and obtain the re-expansion
\begin{gather}\begin{split}\label{Step5}
&R_{1}^{(n)}(z;\alpha_n)=\sum_{m(n)}\sum_{\ell(m)}\frac{z^{(1-N_0^{(n)})/\omega_n}}{\left(2\pi\i\right)^2\omega_m\omega_\ell}\sum_{r=0}^{N_2^{(\ell)}-1}\\
&\qquad \left\{{\bf T}_r^{(\ell)}(\alpha_{nm\ell}^{+})
\HypTermTwo{z}{\frac{N_0^{(n)}+1}{\omega_n}-\frac{N_1^{(m)}}{\omega_m}}{\frac{N_1^{(m)}+1}{\omega_m}-\frac{r+1}{\omega_\ell}}{\omega_n}{\omega_m}
{|\mathcal{F}_{nm}^+|\e^{\i(\pi-\theta_{nm}^{+})}}{|\mathcal{F}_{m\ell}^+|\e^{\i(\pi-\theta_{nm\ell}^{+})}}\right.\\
&\qquad\quad \left.-{\bf T}_r^{(\ell)}(\alpha_{nm\ell}^{+}+1)
\HypTermTwo{z}{\frac{N_0^{(n)}+1}{\omega_n}-\frac{N_1^{(m)}}{\omega_m}}{\frac{N_1^{(m)}+1}{\omega_m}-\frac{r+1}{\omega_\ell}}{\omega_n}{\omega_m}
{|\mathcal{F}_{nm}^+|\e^{\i(\pi-\theta_{nm}^{+})}}{|\mathcal{F}_{m\ell}^+|\e^{\i(-\pi-\theta_{nm\ell}^{+})}}\right\}\\
&\qquad\quad+R_{2}^{(n)}(z;\alpha_n).
\end{split}\end{gather}
We also obtain an exact integral representation for the remainder, and this can be used to obtain the estimate
\begin{gather}\begin{split}\label{Step5a}
R_2^{(n)} (z;\alpha _n ) = \e^{ - \eta _0^{(n)} |z|}  &\sum_{m(n)} \left( {\frac{{\eta _0^{(n)}  - \eta _1^{(m)} }}{{|\mathcal{F}_{nm}^ +|}}} \right)^{(\eta _0^{(n)}  - \eta _1^{(m)} )|z|} 
\!\!\!\sum_{\ell(m)} \left( {\frac{{\eta _1^{(m)}  - \eta _2^{(\ell)} }}{{|\mathcal{F}_{m\ell}^ +|}}} \right)^{(\eta _1^{(m)}  - \eta _2^{(\ell)} )|z|}\\
 &\qquad \times  \sum_{k (\ell)} \left|z\right|^{ \frac{1}{{\omega _n }}- \frac{1}{{\omega _k  }}} \left( {\frac{{\eta _2^{(\ell)} }}{{|\mathcal{F}_{\ell k}^ +|}}} \right)^{\eta _2^{(\ell)} \left| z \right|}   \mathcal{O}(1),
\end{split}\end{gather}
as $|z|\to \infty$ in the sector $\theta_{nm_1}^ - \leq \theta \leq \theta_{nm_2}^ +$.

\subsection{Level 3 hyperasymptotics}

We can continue with this process and will obtain at Level 3 the expansion
\begin{gather}\begin{split}\label{Step6}
&R_{2}^{(n)}(z;\alpha_n)=\sum_{m(n)}\sum_{\ell(m)}\sum_{k(\ell)}\frac{z^{(1-N_0^{(n)})/\omega_n}}{\left(2\pi\i\right)^3\omega_m\omega_\ell\omega_k}\sum_{r=0}^{N_3^{(k)}-1}\\
& \left({\bf T}_r^{(k)}(\alpha_{nm\ell k}^{+})
\HypTermK{3}{z}{\begin{array}{c}{\frac{N_0^{(n)}+1}{\omega_n}-\frac{N_1^{(m)}}{\omega_m},}\\ {\omega_n,}\\ {|\mathcal{F}_{nm}^+|\e^{\i(\pi-\theta_{nm}^{+})},}\\ \end{array}
\begin{array}{c}{\frac{N_1^{(m)}+1}{\omega_m}-\frac{N_2^{(\ell)}}{\omega_\ell},}\\ {\omega_m,}\\ {|\mathcal{F}_{m\ell}^+|\e^{\i(\pi-\theta_{nm\ell}^{+})},}\\ \end{array}
\begin{array}{c}{\frac{N_2^{(\ell)}+1}{\omega_\ell}-\frac{r+1}{\omega_k}}\\ {\omega_\ell}\\ {|\mathcal{F}_{\ell k}^+|\e^{\i(\pi-\theta_{nm\ell k}^{+})}}\\ \end{array}}
\right.\\
& -{\bf T}_r^{(k)}(\alpha_{nm\ell k}^{+}+1)
\HypTermK{3}{z}{\begin{array}{c}{\frac{N_0^{(n)}+1}{\omega_n}-\frac{N_1^{(m)}}{\omega_m},}\\ {\omega_n,}\\ {|\mathcal{F}_{nm}^+|\e^{\i(\pi-\theta_{nm}^{+})},}\\ \end{array}
\begin{array}{c}{\frac{N_1^{(m)}+1}{\omega_m}-\frac{N_2^{(\ell)}}{\omega_\ell},}\\ {\omega_m,}\\ {|\mathcal{F}_{m\ell}^+|\e^{\i(\pi-\theta_{nm\ell}^{+})},}\\ \end{array}
\begin{array}{c}{\frac{N_2^{(\ell)}+1}{\omega_\ell}-\frac{r+1}{\omega_k}}\\ {\omega_\ell}\\ {|\mathcal{F}_{\ell k}^+|\e^{\i(-\pi-\theta_{nm\ell k}^{+})}}\\ \end{array}}
\\
& -{\bf T}_r^{(k)}(\alpha_{nm\ell k}^{+}+1)
\HypTermK{3}{z}{\begin{array}{c}{\frac{N_0^{(n)}+1}{\omega_n}-\frac{N_1^{(m)}}{\omega_m},}\\ {\omega_n,}\\ {|\mathcal{F}_{nm}^+|\e^{\i(\pi-\theta_{nm}^{+})},}\\ \end{array}
\begin{array}{c}{\frac{N_1^{(m)}+1}{\omega_m}-\frac{N_2^{(\ell)}}{\omega_\ell},}\\ {\omega_m,}\\ {|\mathcal{F}_{m\ell}^+|\e^{\i(-\pi-\theta_{nm\ell}^{+})},}\\ \end{array}
\begin{array}{c}{\frac{N_2^{(\ell)}+1}{\omega_\ell}-\frac{r+1}{\omega_k}}\\ {\omega_\ell}\\ {|\mathcal{F}_{\ell k}^+|\e^{\i(-\pi-\theta_{nm\ell k}^{+})}}\\ \end{array}}
\\
&\left.+{\bf T}_r^{(k)}(\alpha_{nm\ell k}^{+}+2)
\HypTermK{3}{z}{\begin{array}{c}{\frac{N_0^{(n)}+1}{\omega_n}-\frac{N_1^{(m)}}{\omega_m},}\\ {\omega_n,}\\ {|\mathcal{F}_{nm}^+|\e^{\i(\pi-\theta_{nm}^{+})},}\\ \end{array}
\begin{array}{c}{\frac{N_1^{(m)}+1}{\omega_m}-\frac{N_2^{(\ell)}}{\omega_\ell},}\\ {\omega_m,}\\ {|\mathcal{F}_{m\ell}^+|\e^{\i(-\pi-\theta_{nm\ell}^{+})},}\\ \end{array}
\begin{array}{c}{\frac{N_2^{(\ell)}+1}{\omega_\ell}-\frac{r+1}{\omega_k}}\\ {\omega_\ell}\\ {|\mathcal{F}_{\ell k}^+|\e^{\i(-3\pi-\theta_{nm\ell k}^{+})}}\\ \end{array}}\right)\\
&+R_{3}^{(n)}(z;\alpha_n).
\end{split}\end{gather}

An estimate for the remainder $R_{3}^{(n)}(z;\alpha_n)$, similar to those of \eqref{superasymp}, \eqref{Step4a} and \eqref{Step5a} may be 
obtained, and further iterations to higher hyper-levels derived.  We spare the reader these details as the pattern should now be clear.  

Initially, this expansion might seem over complicated.  However inspection of the terms shows that once we have line two of \eqref{Step6} 
the details of the other lines can be easily deduced. It follows from \eqref{double} and \eqref{eq11}
that the coefficients follow from the coefficients in line 2 by just multiplying by a simple exponential. The generalised hyperterminants only 
differ by a change in the phases of two (bottom centre and right) arguments.

\subsection{Late coefficients and resurgence} The re-expansion \eqref{Step3} is suitable for obtaining an asymptotic expansion for the late (large-$N$) coefficients $T_N^{(n)}(\alpha_n)$. Indeed, if we combine the identity 
$$T_N^{(n)}(\alpha_n)=z^{N/\omega_n}\left(R_N^{(n)}(z;\alpha_n)-R_{N+1}^{(n)}(z;\alpha_n)\right)$$
with \eqref{Step3}, we deduce
\begin{gather}\begin{split}\label{LateTerms}
T_N^{(n)}(\alpha_n)=&
\sum_{m(n)}\frac{1}{2\pi\i\omega_m} \sum_{r=0}^{N_1^{(m)}-1}{\bf T}_r^{(m)}(\alpha_{nm}^{+})
\frac{\e^{\i\theta_{nm}^+\left(\frac{N+1}{\omega_n}-\frac{r+1}{\omega_m}\right)}\Gamma\left(\frac{N+1}{\omega_n}-\frac{r+1}{\omega_m}\right)}
{|\mathcal{F}_{nm}^+|^{\frac{N+1}{\omega_n}-\frac{r+1}{\omega_m}}}\\
&+\widetilde{R}_1^{(n)}(N;\alpha_n).
\end{split}\end{gather}
Note that the coefficients in this expansion are the coefficients of the asymptotic expansions of integrals over doubly infinite contours passing 
through the adjacent saddles, a manifestation of ``resurgence".   The form (\ref{LateTerms}) is of a generalised sum of factorials over 
powers.  Note the careful representation of the phases of the singulants.  Various special cases of \eqref{LateTerms} were derived, using non-rigorous methods, by Dingle (see \cite[Ch. VII]{Dingle73}, 
including exercises). See also \cite{BH91}, \cite{Howls92}.

When we eliminate $|z|$ in the definitions \eqref{optimal} we obtain for the optimal numbers of terms in \eqref{LateTerms} that
$$N_1^{(m)}=\frac{\eta_1^{(m)}\omega_m}{\eta_0^{(n)}\omega_n}N+ \mathcal{O}(1),$$
as $N\to\infty$.

In the swallowtail example below we shall illustrate how this result can be used to determine the adjacency of the saddles algebraically 
rather than geometrically.

\section{Error bounds}\label{sec5}

In this section we derive rigorous, novel and sharp error bounds for the exact remainder $R_N^{(n)} (z;\alpha _n )$ of asymptotic expansions of the form \eqref{FirstExpansion} derived from integrals of the class \eqref{eq1}.

The remainder term \eqref{eq12} can be written as
%%%%%%%%%%%%%%%%%%%%%%%%%%%%%%%%%%%%%%%%%%%%%%%%%%%%%%%%%%%%
\begin{gather}\begin{split}
&R_N^{(n)} (z;\alpha _n ) \\ 
&\quad= \frac{{\omega _n }}{{2\pi \i z^{N/ \omega _n} }} \sum_{m(n)} 
\int_{\mathscr{C}^{(m)} (\theta _{nm}^ +)} {\frac{g(t)}{{(f(t) - f_n )^{(N + 1)/\omega _n} }}
\int_0^{\infty \e^{\frac{{2\pi \i\alpha _n }}{{\omega _n }}} }\!\!\! {\frac{{ \e^{ - s^{\omega _n } }s^N }}{{1 - \frac{s}{\left( {z(f(t) - f_n )} \right)^{1/\omega _n}} }}\d s} \d t} \\
&\quad = \frac{{\e^{2\pi \i\frac{{N + 1}}{{\omega _n }}\alpha _n } }}{{2\pi \i z^{N/ \omega _n} }} \sum_{m(n)}  
\int_{\mathscr{C}^{(m)} (\theta _{nm}^ +)} {\frac{g(t)}{{(f(t) - f_n )^{(N + 1)/\omega _n} }}
\int_0^{\infty }\!\!\! {\frac{\e^{ - u} u^{\frac{{N + 1}}{{\omega _n }} - 1}}{1 + \left( \frac{u\e^{\pi \i(2\alpha _n -\omega _n) } }{z(f(t) - f_n )} \right)^{1/\omega _n} }\d u} \d t} , \label{remalt}
\end{split}\end{gather}
%%%%%%%%%%%%%%%%%%%%%%%%%%%%%%%%%%%%%%%%%%%%%%%%%%%%%%%%%%%%
where $\mathscr{C}^{(m)} (\theta _{nm}^ +) := \mathscr{P}^{(m)} (\theta _{nm}^ +  ,\alpha _{nm}^ +  )\cup -\mathscr{P}^{(m)} (\theta _{nm}^ +  ,\alpha _{nm}^ +  +1)$. We note that
%%%%%%%%%%%%%%%%%%%%%%%%%%%%%%%%%%%%%%%%%%%%%%%%%%%%%%%%%%%%
\begin{align*}
\arg \left(\frac{u\e^{\pi \i(2\alpha _n -\omega _n) } }{z(f(t) - f_n )}\right) &= 2\pi \alpha _n  - \pi \omega _n  - \theta - ( - \theta _{nm}^ +   + 2\pi \alpha _n )\\ 
&=  - \pi \omega _n  - \theta + \theta _{nm}^ +   >  - \pi \omega _n ,
\end{align*}
and
\begin{align*}
\arg \left(\frac{u\e^{\pi \i(2\alpha _n -\omega _n) } }{z(f(t) - f_n )}\right) &= 2\pi \alpha _n  - \pi \omega _n  - \theta - ( - \theta _{nm}^ +   + 2\pi \alpha _n )=  - \pi \omega _n  - \theta + \theta _{nm}^ +   \\ 
&=  - \pi \omega _n  - \theta + \theta _{nm}^ -   + 2\pi \omega _n  =  \pi \omega _n  - \theta + \theta _{nm}^ -  <\pi \omega_n ,
\end{align*}
%%%%%%%%%%%%%%%%%%%%%%%%%%%%%%%%%%%%%%%%%%%%%%%%%%%%%%%%%%%%
whenever $t\in \mathscr{C}^{(m)} (\theta _{nm}^ +)$. Thus, $$\left|\arg \left(\frac{u\e^{\pi \i(2\alpha _n -\omega _n) } }{z(f(t) - f_n )}\right)\right| < \pi \omega _n.$$ Consequently, the $u$-integral may be expressed in terms of the generalised first-level hyperterminant as
%%%%%%%%%%%%%%%%%%%%%%%%%%%%%%%%%%%%%%%%%%%%%%%%%%%%%%%%%%%%
\begin{multline*}
\int_0^{\infty } {\frac{\e^{ - u} u^{\frac{{N + 1}}{{\omega _n }} - 1}  }{1 + \left( \frac{u\e^{\pi \i(2\alpha _n -\omega _n) }}{z(f(t) - f_n )} \right)^{1/\omega _n} }\d u} \\ 
= \e^{ - \pi \frac{{N + 1}}{{\omega _n }}\i} \left(\e^{ \pi \i(\omega _n-2\alpha_n) } z(f(t) - f_n )\right)^{\frac{1}{\omega _n} } 
\HypTermOne{\e^{ \pi \i(\omega _n-2\alpha_n) } z(f(t) - f_n )}{\frac{{N + 1}}{{\omega _n }}}{\omega_n}{1}
.
\end{multline*}
%%%%%%%%%%%%%%%%%%%%%%%%%%%%%%%%%%%%%%%%%%%%%%%%%%%%%%%%%%%%
Inserting this expression into \eqref{remalt}, we obtain the following alternative representation of $R_N^{(n)} (z;\alpha _n )$:
%%%%%%%%%%%%%%%%%%%%%%%%%%%%%%%%%%%%%%%%%%%%%%%%%%%%%%%%%%%%
\begin{gather}\label{altrep}
\begin{split}
&R_N^{(n)} (z;\alpha _n ) = \frac{\e^{(2\alpha_n-1)\pi \i\frac{N + 1}{\omega _n } } }{2\pi \i z^{N/\omega _n} } \sum_{m(n)}  
\int_{\mathscr{C}^{(m)} (\theta _{nm}^ +)} \frac{g(t)}{(f(t) - f_n )^{(N + 1)/\omega _n} } \\ 
&\quad \times \left(\e^{ \pi \i(\omega _n-2\alpha_n) } z(f(t) - f_n )\right)^{\frac{1}{\omega _n} } 
\HypTermOne{\e^{ \pi \i(\omega _n-2\alpha_n) } z(f(t) - f_n )}{\frac{{N + 1}}{{\omega _n }}}{\omega_n}{1}\d t .
\end{split}
\end{gather}
%%%%%%%%%%%%%%%%%%%%%%%%%%%%%%%%%%%%%%%%%%%%%%%%%%%%%%%%%%%%
This representation is valid when $\theta_{nm_1}^- -\frac{\pi}{2}<\theta <\theta_{nm_2}^+ +\frac{\pi}{2}$ (cf. \eqref{bound} below). We may then bound the $t$ integral as follows
%%%%%%%%%%%%%%%%%%%%%%%%%%%%%%%%%%%%%%%%%%%%%%%%%%%%%%%%%%%%
\begin{align*}
\left| R_N^{(n)} (z;\alpha _n ) \right|
& \le  \frac{\Gamma \left( \frac{N + 1}{\omega _n} \right)}{2\pi \left| z \right|^{N/\omega _n} } 
 \sum_{m(n)}  \int_{\mathscr{C}^{(m)} (\theta _{nm}^ +)} \left| \frac{g(t)}{(f(t) - f_n )^{(N + 1)/\omega _n} }\d t \right|\\
& \times \mathop {\sup }\limits_{r \ge 1} \left| \frac{\left(z\big| \mathcal{F}_{nm}^ +\big|\e^{(\pi \omega _n-\theta _{nm}^ +  )\i} r\right)^{\frac{1}{\omega _n }}}{\Gamma \left( \frac{N + 1}{\omega _n } \right)}
\HypTermOne{z\big| \mathcal{F}_{nm}^ +\big|\e^{(\pi \omega _n-\theta _{nm}^ +   )\i} r}{\frac{{N + 1}}{{\omega _n }}}{\omega_n}{1}  \right|.
\end{align*}
A further simplification of this bound is possible, by employing the estimates for the generalised first-level hyperterminant given in Appendix \ref{appB}. In this way, we obtain
%%%%%%%%%%%%%%%%%%%%%%%%%%%%%%%%%%%%%%%%%%%%%%%%%%%%%%%%%%%%
\begin{gather}\label{bound}
\begin{split}
& \left| R_N^{(n)} (z;\alpha _n ) \right| 
 \le \frac{\Gamma \left( \frac{N + 1}{\omega _n} \right)}{2\pi \left| z \right|^{N/\omega _n} } \sum_{m(n)}  \int_{\mathscr{C}^{(m)} (\theta _{nm}^ +)} \left| \frac{g(t)}{(f(t) - f_n )^{(N + 1)/\omega _n} }\d t \right|\times\\ 
 & 
 \begin{cases} 1 & \!\!\!\text{ if } \;  |\theta - \theta _{nm}^ +   + \pi \omega _n | \leq \frac{\pi}{2}\omega_n , \\ 
 \min\left(\left| \csc \left( \frac{\theta - \theta _{nm}^ +  }{\omega _n } \right) \right|,\omega _n \sqrt {\e\left( \frac{N + 1}{\omega _n} + \frac{1}{2} \right)} \right) & 
 \!\!\!\text{ if } \; \frac{\pi}{2}\omega_n  < |\theta - \theta _{nm}^ +   + \pi \omega _n | \leq \pi \omega_n , \\ 
 \frac{\sqrt {2\pi \omega _n (N + 1)} }{\left| \cos (\theta - \theta _{nm}^ +  ) \right|^{\frac{N + 1}{\omega _n }} } + \omega _n \sqrt {\e\left( \frac{N + 1}{\omega _n} + \frac{1}{2} \right)} & 
 \!\!\!\!\!\!\!\!\!\text{ if } \; \pi \omega_n  < |\theta - \theta _{nm}^ +   + \pi \omega _n| < \pi \omega_n+\frac{\pi}{2}. \end{cases}
\end{split}
\end{gather}
%%%%%%%%%%%%%%%%%%%%%%%%%%%%%%%%%%%%%%%%%%%%%%%%%%%%%%%%%%%%
In the case of linear endpoint ($\omega_n=1$), the quantity $\sqrt {\e\left( N + \frac{3}{2} \right)}$ in \eqref{bound} can be replaced by \eqref{gammaratio} with $M=N+1$.

In \eqref{eq101} we may expand the loop contour of integration around the critical point $t^{(n)}$ across the domain $\Delta^{(n)}$ to obtain a representation of the asymptotic coefficients in terms of integrals over the contours $\mathscr{C}^{(m)} (\theta _{nm}^ +)$ as follows,
%%%%%%%%%%%%%%%%%%%%%%%%%%%%%%%%%%%%%%%%%%%%%%%%%%%%%%%%%%%%
\begin{equation}\label{neglectedTerm}
\left| {\frac{{T_N^{(n)} (\alpha _n )}}{{z^{N/\omega _n} }}} \right| 
= \frac{\Gamma \left( \frac{N + 1}{\omega _n} \right)}{{2\pi \left| z \right|^{N/\omega _n} }} \left| \sum_{m(n)} \int_{\mathscr{C}^{(m)} (\theta _{nm}^ +)} \frac{{g(t)}}{{(f(t) - f_n )^{(N + 1)/\omega _n} }}\d t   \right|.
\end{equation}
%%%%%%%%%%%%%%%%%%%%%%%%%%%%%%%%%%%%%%%%%%%%%%%%%%%%%%%%%%%%
This representation illustrates the close relation between the form of the bound \eqref{bound} and the absolute value of the first neglected 
term.  The modulus bars are inside the integral in \eqref{bound} whereas they are at the outside of the integral in \eqref{neglectedTerm}.
However, in \eqref{neglectedTerm} we integrate along steepest descent paths $\mathscr{C}^{(m)} (\theta _{nm}^ +)$ on which $f(t)-f_{n}$ is 
monotonically decreasing. This means that only when $g(t)$ is highly oscillatory, will the integral in \eqref{bound} be considerably larger  than the integral in \eqref{neglectedTerm}. 
The larger the value of $N$, the smaller the difference in size of the two integrals.

Figure \ref{figure4}, for our first example below, clearly demonstrates the asymptotic property that sizes of the exact terms and the 
corresponding remainders are approximately the same.   This follows from the factor $1$ in the second line
of \eqref{bound}. In Figure \ref{figure9}, which is for our second example, the remainders are considerably larger than the terms. That
example illustrates the effect of the additional factor $\omega _n \sqrt {\e\left( \frac{N + 1}{\omega _n} + \frac{1}{2} \right)}$ in the third line of 
\eqref{bound} pertaining to the parameters $\theta$, $\omega_{n}$ and $\theta_{nm}^{+}$ of that particular calculation.

\subsection{Bounds for simple saddles} 
If $t^{(n)}$ is a simple saddle, then the integral over the double infinite contour through $t^{(n)}$ can be expanded as
\[
{\bf T}^{(n)}(z,0)= \sum_{r=0}^{N-1}\frac{{\bf T}_{2r}^{(n)}(0)}{z^r} + {\bf R}_N^{(n)}(z,0),
\]
with ${\bf R}_N^{(n)}(z,0)= R_{2N}^{(n)}(z;0)-R_{2N}^{(n)}(z;1)$. The estimation of ${\bf R}_N^{(n)}(z,0)$ was considered by Boyd \cite{Boyd93} in the case that all the adjacent saddles are simple. Employing \eqref{altrep} and simplifying the result, we obtain
\begin{align*}
{\bf R}_N^{(n)}(z,0) = \frac{( - 1)^{N + 1} }{\pi z^N}\sum_{m(n)} &\int_{\mathscr{C}^{(m)} (\theta _{nm}^ +)} \frac{g(t)}{(f(t) - f_n )^{N + \frac{1}{2}} }\\
&\times\e^{\pi \i} z(f(t) - f_n )F^{(1)} \left( \e^{\pi \i} z(f(t) - f_n );\begin{array}{c}
   {N + \frac{1}{2}}  \\
   1  \\
\end{array} \right)\d t  .
\end{align*}
This representation is valid when $\theta_{nm_1}^- -\frac{\pi}{2}<\theta <\theta_{nm_2}^+ +\frac{\pi}{2}$. We may then bound the $t$ integral as follows
\begin{align*}
\left|{\bf R}_N^{(n)}(z,0)\right| \le \; & \frac{\Gamma \left( N + \frac{1}{2} \right)}{\pi \left| z \right|^N}\sum_{m(n)} \int_{\mathscr{C}^{(m)} (\theta _{nm}^ +)} \left| \frac{g(t)}{(f(t) - f_n )^{N + \frac{1}{2}} }\d t \right| \\ &\times \mathop {\sup }\limits_{r \ge 1} \left| \frac{z|\mathcal{F}_{nm}^ +  |\e^{(\pi  - \theta _{nm}^ +  )\i} r}{\Gamma \left( N + \frac{1}{2} \right)}F^{(1)} \left( z|\mathcal{F}_{nm}^ +  |\e^{(\pi  - \theta _{nm}^ +  )\i} r;\begin{array}{c}
   {N + \frac{1}{2}}  \\
   1  \\
\end{array} \right) \right| .
\end{align*}
A further simplification of this bound is possible, by applying the estimates for the generalised first-level hyperterminant given in Appendix \ref{appB}. In this way, we deduce
\begin{align}\label{bounder}
\left|{\bf R}_N^{(n)}(z,0)\right| \le \; & \frac{\Gamma \left( N + \frac{1}{2} \right)}{\pi \left| z \right|^N}\sum_{m(n)} \int_{\mathscr{C}^{(m)} (\theta _{nm}^ +)} \left| \frac{g(t)}{(f(t) - f_n )^{N + \frac{1}{2}} }\d t \right| \\ \nonumber
\\ \nonumber & \times \begin{cases} 1 & \text{ if } \;  |\theta - \theta _{nm}^ +   + \pi | \leq \frac{\pi}{2} , \\ \min(| \csc ( \theta - \theta _{nm}^ +  )|, \sqrt {\e( N+1)} ) & \text{ if } \; \frac{\pi}{2}  < |\theta - \theta _{nm}^ +   + \pi  | \leq \pi , \\  \nonumber \frac{\sqrt {2\pi \left(N + \frac{1}{2}\right)} }{\left| \cos (\theta - \theta _{nm}^ +  ) \right|^{N+\frac{1}{2}} } + \sqrt {\e ( N + 1)} & \text{ if } \; \pi  < |\theta - \theta _{nm}^ +   + \pi | < \frac{3\pi}{2}. \end{cases}
\end{align}
The quantity $\sqrt {\e(N+1)}$ in this bound can be replaced by \eqref{gammaratio} with $M=N+\frac{1}{2}$. 

The bound (\ref{bounder}) improves Boyd's \cite{Boyd93} results in three ways.  First, it is more general in that the adjacent saddles need not to be simple.  
Second, (\ref{bounder}) extends the range of validity of the bound to include $\pi  < |\theta - \theta _{nm}^ +   + \pi | < \frac{3\pi}{2}$.  Third, 
the new result sharpens the bound with a factor $ \sqrt {\e ( N + 1)}$ in place of Boyd's larger $2\sqrt{N}$ factor, and for this larger factor to hold
he even requires the extra assumption $N \ge \cot ^2 \left( \frac{1}{2}\left( \theta _{nm_2 }^ +   - \theta _{nm_1 }^ -  \right) \right)$.

\section{Example 1: Pearcey on the cusp}\label{sec6}

A rescaled Pearcey function (compare \cite[\href{http://dlmf.nist.gov/36.2}{\S36.2}]{NIST:DLMF}) is defined by the integral 
%%%%%%%%%%%%%%%%%%%%%%%%%%%%%%%%%%%%%%%%%%%%%%%%%%%%%%%%%%%%
\begin{equation}  \label{eq24a}
\Psi_{2}(x,y;z)=\int_{-\infty}^{+\infty}\e^{-zf(t;x,y)} \d t, \qquad f(t;x,y)=-\i\left(t^{4}+yt^{2}+xt\right).
\end{equation}
%%%%%%%%%%%%%%%%%%%%%%%%%%%%%%%%%%%%%%%%%%%%%%%%%%%%%%%%%%%%
Due to the polynomial nature of the exponent function and the ability to scale $t$, $z$, with $x$ and $y$, without loss of generality the 
modulus of the large parameter $z$ may be set to $1$.  The function represents the wavefield in the neighbourhood of the canonically stable cusp catastrophe \cite{Berry69} and occurs commonly in two dimensional linear wave problems. 

The integrand possesses three saddle points $t^{(j)}, j=1,2,3$, satisfying 
%%%%%%%%%%%%%%%%%%%%%%%%%%%%%%%%%%%%%%%%%%%%%%%%%%%%%%%%%%%%
\begin{equation} \nonumber %\label{eq25a}
f'(t^{(j)};x,y)=4\left(t^{(j)}\right)^{3}+2yt^{(j)}+x=0.
\end{equation}
%%%%%%%%%%%%%%%%%%%%%%%%%%%%%%%%%%%%%%%%%%%%%%%%%%%%%%%%%%%%
In \cite{BH91} a hyperasymptotic expansion of the Pearcey function was calculated in the case of three distinct saddle points. Here we have 
extended that analysis to cover the case where two of the saddles have coalesced.

Two of the three saddle points coalesce on the cusp-shaped caustic given by
%%%%%%%%%%%%%%%%%%%%%%%%%%%%%%%%%%%%%%%%%%%%%%%%%%%%%%%%%%%%
\begin{equation} \nonumber %\label{eq26a}
f'(t;x,y)=f''(t;x,y)=0 \qquad \Rightarrow \qquad 27x^{2}=-8y^{3}, \qquad (x,y)\ne 0,
\end{equation}
%%%%%%%%%%%%%%%%%%%%%%%%%%%%%%%%%%%%%%%%%%%%%%%%%%%%%%%%%%%%
%%%%%%%%%%%%%%%%%
%%% Figure 3minus %%%
%%%%%%%%%%%%%%%%%
%\begin{figure}
%\centering\includegraphics[width=0.4\textwidth]{Figure3b.pdf}%
%\caption{Location of the parameter point $(x,y)=(2\sqrt{2},-3)$ at which we evaluate the integral (\ref{eq24a}) relative to the caustic of the Pearcey function, satisfying $27x^{2}=-8y^{3})$.}\label{figurexy}
%\end{figure}
%%%%%%%%%%%%%%%%%
%%%%%%%%%%%%%%%%%
see Figure \ref{figure3ab}(a).
(At the origin $(x,y)=(0,0)$, all three saddles coalesce, where the integral reduces to an exact explicit representation  \cite[\href{http://dlmf.nist.gov/36.2.E15}{36.2.15}]{NIST:DLMF}.)

We shall choose $x=2\sqrt{2}$, $y=-3$.  There is a simple saddle at $t^{(1)}=-\sqrt{2}$ and a double saddle denoted by $t^{(2)}=1/\sqrt{2}$.  The asymptotic expansion about $t^{(1)}$ 
has $\omega_{1}=2$ and is controlled by the double saddle at $t^{(2)}$ with $\omega_{2}=3$, and vice versa.  

%The integral in which the semi-infinite contour starts at the simple saddle satisfies the differential equation
%\begin{align*}
%& 64z^3w'''(z)+\left(32\i(y^2-36)z^3+160z^2\right)w''(z)\\
%&+\left(\ifrac43(5y^2-72)(y^2+72)z^3+\ifrac{16}{3}\i(13y^2-360)z^2+28z\right)w'(z)\\
%&+\left(-\ifrac8{27}\i(y^2-9)\left(y^2+72\right)^2z^3+\ifrac29(31y^2-360)(y^2+72)z^2+\ifrac13\i(25y^2-504)z+2\right)w(z)\\
%&=-\frac{x}{y}\left(\ifrac43(y^2-9)(y^2+72)z^2+2\i(5y^2-18)z+3\right).
%\end{align*}

%%%%%%%%%%%%%%%%%
%%% Figure 3 %%%
%%%%%%%%%%%%%%%%%
%\begin{figure}
%\centering\includegraphics[width=0.5\textwidth]{Figure3.pdf}%
%\caption{Steepest descent paths in the complex $t$ plane passing through the simple saddle $t^{(1)}$ ($\omega_{1}=2$) and degenerate saddle $t^{(2)}$ ($\omega_{2}=3$) between labelled valleys $V_{i}$, $i=1, 2, 3, 4$ at infinity. NB ALSO PUT IN CAUSTIC PICTURE IN (X-Y) PLANE TO FILL OUT SPACE.}\end{figure}
\begin{figure}

\subfigure[ \hspace{0mm} ]{\includegraphics[width=0.4\hsize]{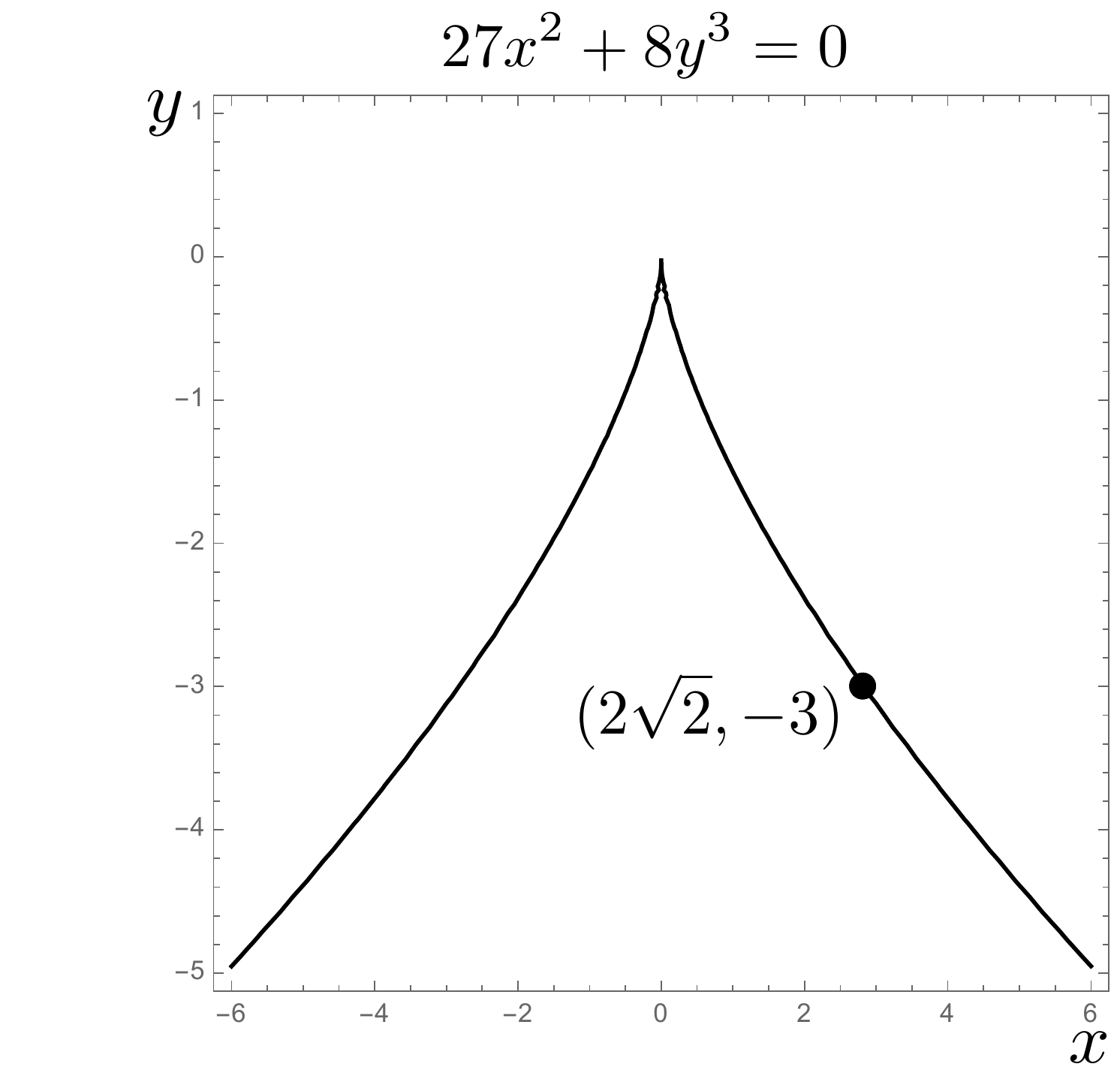}}
\hfill
\subfigure[ \hspace{0 mm} ]{\includegraphics[width=0.4\hsize]{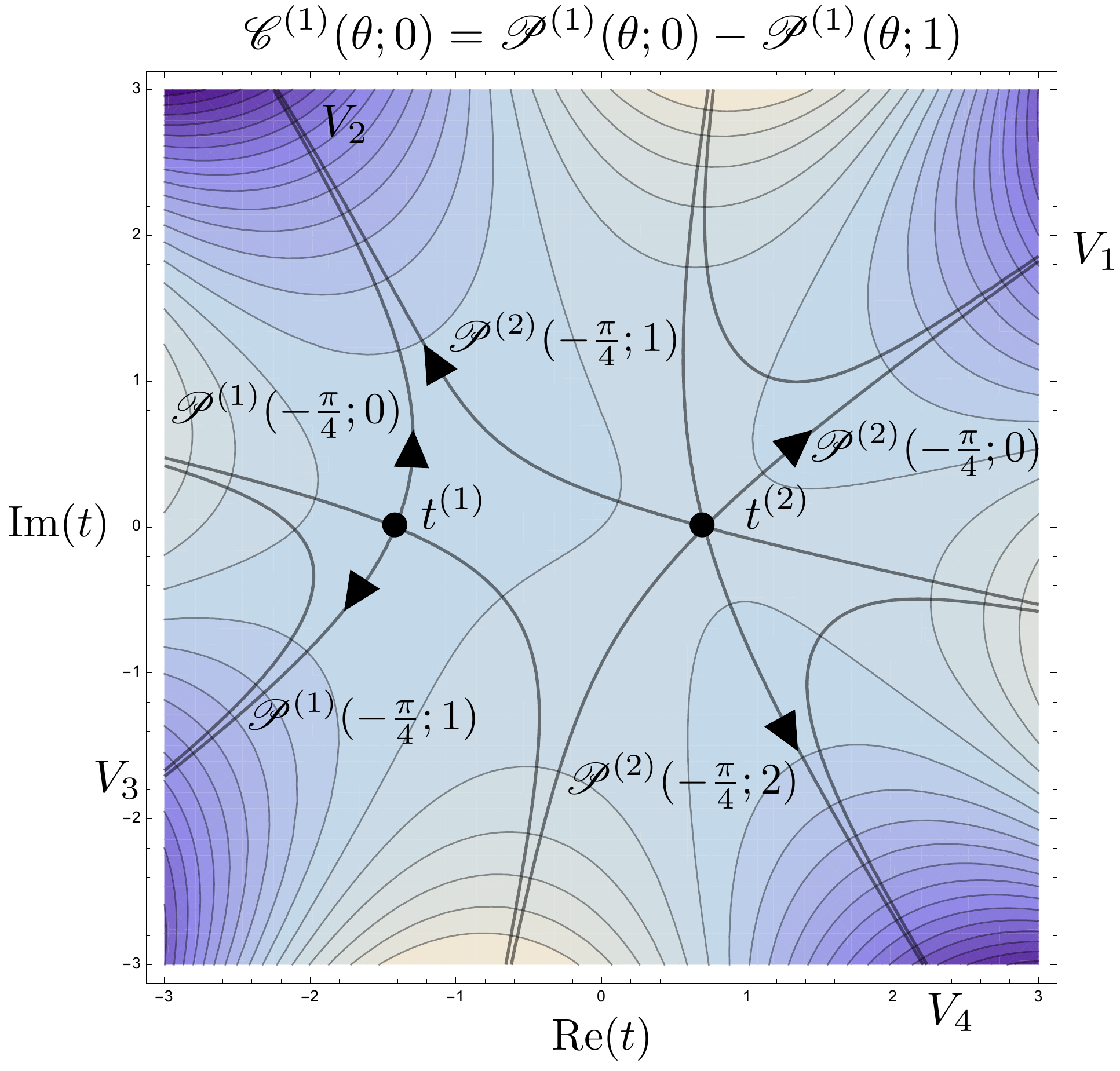}}
\hfill
\subfigure[\hspace{0 mm} ]{\includegraphics[width=0.4\hsize]{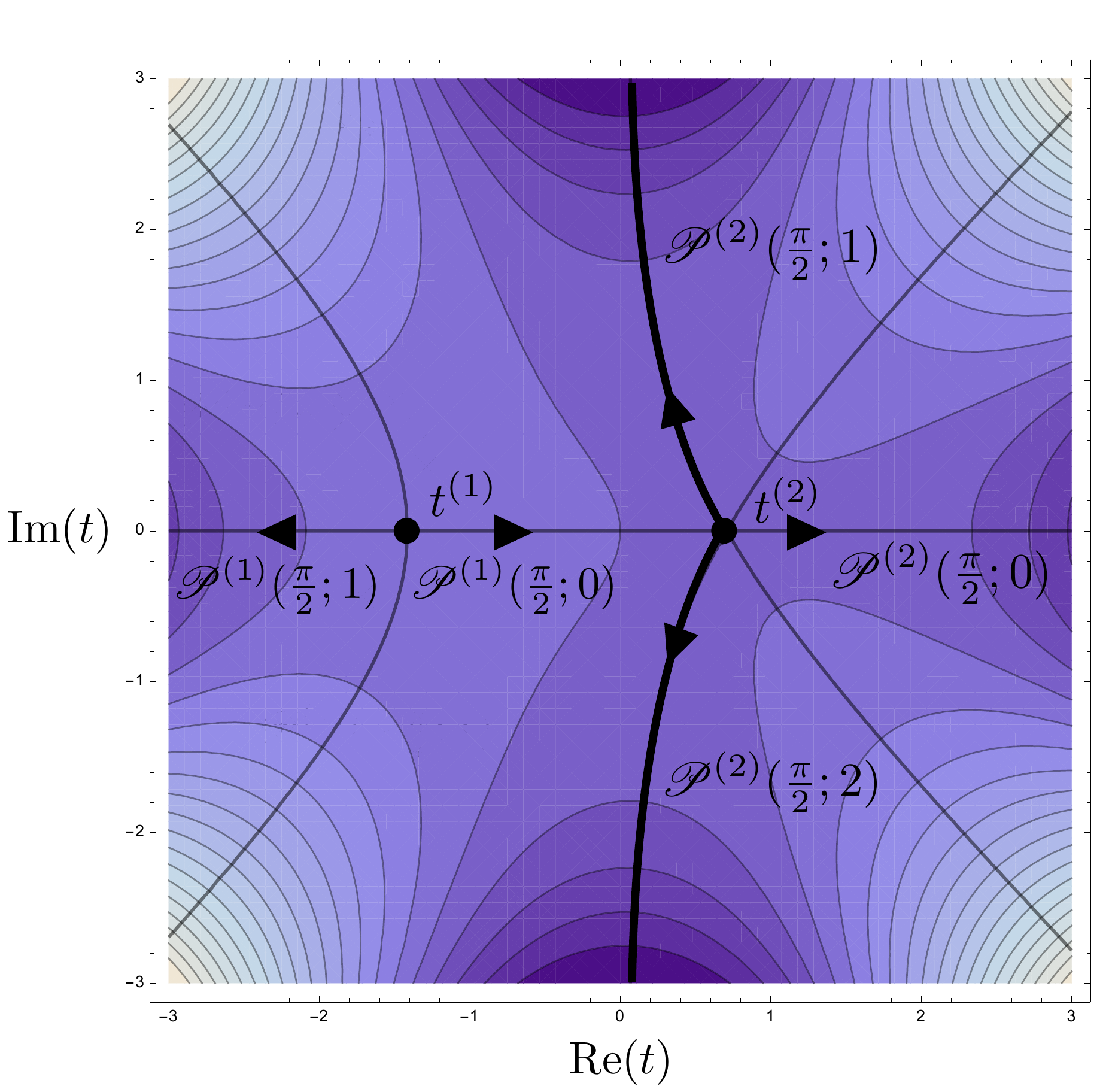}}
\hfill
\subfigure[\hspace{0 mm} ]{\includegraphics[width=0.4\hsize]{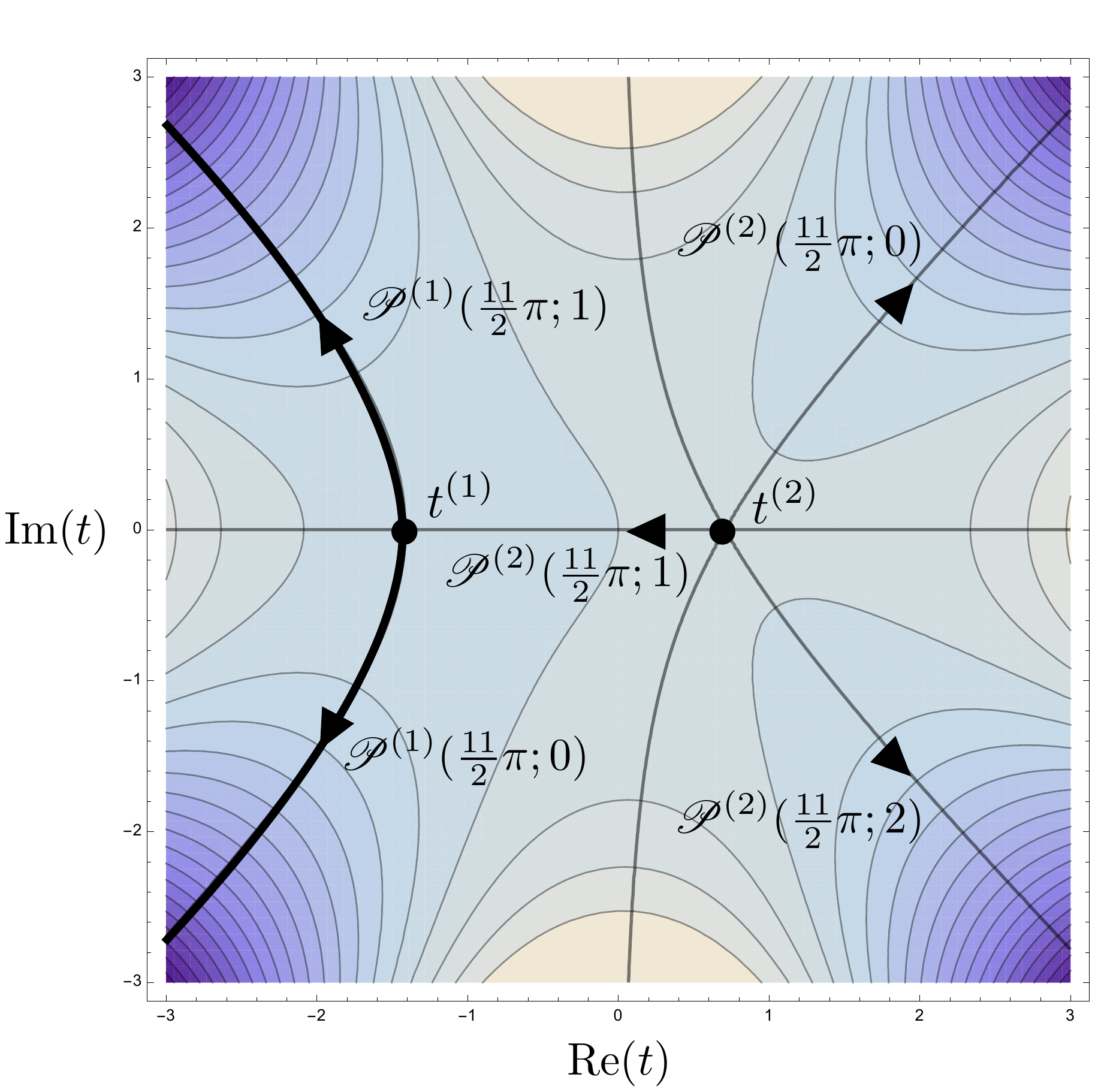}}

\caption{(a) Location of the parameter point $(x,y)=(2\sqrt{2},-3)$ at which we evaluate the integral (\ref{eq24a}) relative to the caustic of the Pearcey function, satisfying $27x^{2}
=-8y^{3}$. (b) The steepest descent paths ${\mathscr P}^{(1)}(-\frac{\pi}{4},0)$, ${\mathscr P}^{(1)}(-\frac{\pi}{4},1)$ in the complex $t$-plane emerging from the simple saddle $t^{(1)}$ 
($\omega_{1}=2$) and travelling to labelled valleys $V_{j}$, $j=2, 3$ at infinity.  Also shown is the degenerate saddle $t^{(2)}$ ($\omega_{2}=3$).  (c) The steepest descent paths $
{\mathscr P}^{(2)}(\frac{\pi}{2},\alpha_{2})$, $\alpha_{2}=0,1,2$, emerging from $t^{(2)}$, as a Stokes phenomenon occurs between $t^{(1)}$ and $t^{(2)}$ when $\theta_{12}^{+}=\frac{\pi}{2}$. 
The bold lines are the steepest paths that are used in the Level 1 hyperasymptotic expansion about $t^{(1)}$ (\ref{Step3}), (\ref{double}). (d)  The steepest descent paths ${\mathscr 
P}^{(2)}(\frac{11}{2}\pi,\alpha_{2})$, $\alpha_{2}=0,1,2$, emerging from $t^{(2)}$, as a Stokes phenomenon occurs between 
$t^{(2)}$ and $t^{(1)}$ when $\theta_{121}^{+}=\frac{11}{2}\pi$. The bold lines are the steepest paths that are used in the Level 2 hyperasymptotic expansion about $t^{(1)}$  (\ref{Step5}), 
(\ref{double}). (Or Level 1 hyperasymptotic expansion about $t^{(2)}$.)  }
\label{figure3ab}
\end{figure}
%%%%%%%%%%%%%%%%%
%%%%%%%%%%%%%%%%%

 %Location of the corresponding paramter point $(x,y)=(2\sqrt{2},-3)$ relative to the caustic of the Pearcey function (\ref{eq24a}) satisfying $27x^{2}=-8y^{3})$.}

We shall calculate a hyperasymptotic expansion about $t^{(1)}$.  We take $z=\e^{\i\theta}$ and chose $\theta=-\frac{\pi}{4}$.  The steepest paths are denoted by
${\mathscr P}^{(1)}(-\frac{\pi}{4},0)$ and ${\mathscr P}^{(1)}(-\frac{\pi}{4},1)$, see Figure \ref{figure3ab}(b).

In the calculations below we will use \eqref{alphanm} many times and observe that in this case $ \arg (f^{(\omega _1 )} (t^{(1)} ))= \arg (f^{(\omega _2 )} (t^{(2)} ))=-\frac{\pi}{2}$, and in Figures \ref{figure3ab}(c,d)
for the curve ${\mathscr P}^{(2)}(\frac{\pi}{2},1)$ we have $\varphi=\frac{2}{3}\pi$ and for curve ${\mathscr P}^{(1)}(\frac{11}{2}\pi,1)$ we have $\varphi=-\frac{\pi}{2}$.

%The steepest path through $t^{(1)}$ is then given by $\mathscr{C}^{(1)} (\theta ;\alpha_{1},\beta_{1})$, 
%where $\theta=\arg z, \alpha_{1}=0, \beta_{1}=1$.  These correspond [HOW] to the steepest path over $t^{(i)}$ from valley $V_{3}$ to valley $V_{2}$ in figure \ref{figure3ab}.

The normalised integrals that we consider are 
%%%%%%%%%%%%%%%%%%%%%%%%%%%%%%%%%%%%%%%%%%%%%%%%%%%%%%%%%%%%
\begin{equation} \nonumber % \label{eq27a}
T^{(1)} (z;\alpha_1) = 2 z^{1/2} \int_{{\mathscr P}^{(1)}(-\frac{\pi}{4},\alpha_1)} {\e^{z\i(t^4  - 3t^2  + 2\sqrt 2 t + 6)}\d t} ,\qquad \alpha_1=0,1,
\end{equation}
%%%%%%%%%%%%%%%%%%%%%%%%%%%%%%%%%%%%%%%%%%%%%%%%%%%%%%%%%%%%
which posses the asymptotic expansions
%%%%%%%%%%%%%%%%%%%%%%%%%%%%%%%%%%%%%%%%%%%%%%%%%%%%%%%%%%%%
\begin{equation} \label{eq27b}
T^{(1)} (z;\alpha_1) = \sum\limits_{r = 0}^{N_0^{(1)} - 1} {\frac{{T_{r}^{\left( 1 \right)} \left( {\alpha_1} \right)}}{{z^{r/2} }}}  
+ R_1^{(1)}(z;\alpha_1),
\end{equation}
%%%%%%%%%%%%%%%%%%%%%%%%%%%%%%%%%%%%%%%%%%%%%%%%%%%%%%%%%%%%
with coefficients
%%%%%%%%%%%%%%%%%%%%%%%%%%%%%%%%%%%%%%%%%%%%%%%%%%%%%%%%%%%%
\begin{gather}\label{Pcoeff}
\begin{split}
T_{r}^{\left( 1 \right)} \left( {0} \right) & = \e^{\frac{\pi }{4}(r + 1)\i}  \frac{{\Gamma ( \frac{r+1}{2})}}{{\Gamma (r + 1)}}\left[ {\frac{{\d^{r} }}{{\d t^{r} }}\left( {\frac{{(t + \sqrt 2 )^2 }}{{t^4  - 3t^2  + 2\sqrt 2 t + 6}}} 
\right)^{(r +1)/2} } \right]_{t =  - \sqrt 2 } \\  
& =\e^{\frac{\pi }{4}(r + 1)\i}  \frac{{\Gamma ( \frac{r+1}{2})}}{{\Gamma (r + 1)}}\left[ {\frac{{\d^{r} }}{{\d t^{r} }}\left( {\frac{1}{{t^2  - 4\sqrt 2 t + 9}}} \right)^{(r+1)/2} } \right]_{t = 0} 
\\ & = \frac{\e^{\frac{\pi }{4}(r + 1)\i}}{{3^{2r + 1} }}\Gamma \Big(\frac{r+1}{2}\Big)C_{r}^{(\frac{r+1}{2})} \Big(\frac{{2\sqrt 2 }}{3}\Big),
\end{split}
\end{gather}
%%%%%%%%%%%%%%%%%%%%%%%%%%%%%%%%%%%%%%%%%%%%%%%%%%%%%%%%%%%%
 $T_{r}^{\left( 1 \right)} \left( {\alpha_1} \right)=\e^{2\pi\i\alpha_1 (r+1)/2}T_{r}^{\left( 1 \right)} \left( {0} \right)$.
In deriving the coefficients, in the penultimate line of (\ref{Pcoeff}) we have recognised the presence of the generating function \cite[\href{http://dlmf.nist.gov/18.12.E4}{eq.~18.12.4}]{NIST:DLMF} for the ultraspherical polynomials $C_r^{\left( p \right)}(w)$ .

We will also need the coefficients of the asymptotic expansions of the integrals
\begin{equation}  \nonumber %\label{eq27c}
T^{(2)} (z;\alpha_2) = 3 z^{1/3} \int_{{\mathscr P}^{(2)}(-\frac{\pi}{4},\alpha_2)} {\e^{z\i(t^4  - 3t^2  + 2\sqrt 2 t -\frac34)}\d t} ,\qquad \alpha_2=0,1,2,
\end{equation}
which possess the asymptotic expansions
%%%%%%%%%%%%%%%%%%%%%%%%%%%%%%%%%%%%%%%%%%%%%%%%%%%%%%%%%%%%
\begin{equation}  \nonumber %\label{eq27d}
T^{(2)} (z;\alpha_2) = \sum\limits_{r = 0}^{N_0^{(1)} - 1} {\frac{{T_{r}^{\left( 2 \right)} \left( {\alpha_2} \right)}}{{z^{r/3} }}}  
+ R_0^{(2)}(z;\alpha_2),
\end{equation}
%%%%%%%%%%%%%%%%%%%%%%%%%%%%%%%%%%%%%%%%%%%%%%%%%%%%%%%%%%%%
with coefficients
%%%%%%%%%%%%%%%%%%%%%%%%%%%%%%%%%%%%%%%%%%%%%%%%%%%%%%%%%%%%
\begin{align*}
T_{r}^{\left( 2 \right)} \left( {0} \right) & = \e^{\frac{\pi }{6}(r + 1)\i}  \frac{{\Gamma ( \frac{r+1}{3})}}{{\Gamma (r + 1)}}\left[ {\frac{{\d^{r} }}{{\d t^{r} }}\left( {\frac{{(t - 1/\sqrt 2 )^3 }}{{t^4  - 3t^2  + 2\sqrt 2 t -3/4}}} 
\right)^{(r +1)/3} } \right]_{t =  1/\sqrt 2 } \\
& =\e^{\frac{\pi }{6}(r + 1)\i}  \frac{{\Gamma ( \frac{r+1}{3})}}{{\Gamma (r + 1)}}\left[ {\frac{{\d^{r} }}{{\d t^{r} }}\left( {\frac{1}{{t+2\sqrt 2}}} \right)^{(r+1)/3} } \right]_{t = 0} 
\\ & = \frac{\e^{\frac{\pi }{6}(r + 1)\i}}{{2^{2r + 1/2} }}\Gamma \Big(\frac{r+1}{3}\Big){-\frac{r+1}{3}\choose r},
\end{align*}
%%%%%%%%%%%%%%%%%%%%%%%%%%%%%%%%%%%%%%%%%%%%%%%%%%%%%%%%%%%%
and $T_{r}^{\left( 2 \right)} \left( {\alpha_2} \right)=\e^{2\pi\i\alpha_2 (r+1)/3}T_{r}^{\left( 2 \right)} \left( {0} \right)$.

%Since there is only one other (degenerate) saddle $t^{(2)}$ for this analytic integrand, it must be adjacent.  The exact remainder then takes the form
%%%%%%%%%%%%%%%%%%%%%%%%%%%%%%%%%%%%%%%%%%%%%%%%%%%%%%%%%%%%
%\begin{align*}
%R_{2N}^{\left( 1 \right)} (z;0,1) & = \frac{1}{{2\pi iz^N }}\int_0^{ + \infty } {s^{N - \frac{1}{2}} e^{ - s} \int_{\mathscr{C}^{(2)} } {\frac{1}{{( - i(t^4  - 3t^2  + 2\sqrt 2 t + 6))^{N + \frac{1}{2}} }}\frac{{dt}}{{1 - s/[ - zi(t^4  - 3t^2  + 2\sqrt 2 t + 6)]}}} ds} 
%\\ & = \frac{1}{{2\pi iz^{N} }}\int_{\mathscr{C}^{(2)} } {\frac{- zi(t^4  - 3t^2  + 2\sqrt 2 t + 6)}{{( - i(t^4  - 3t^2  + 2\sqrt 2 t + 6))^{N+ \frac{1}{2}} }}\mathbf{F}^{(1)} \left( { - ze^{\frac{\pi}{2}i}(t^4  - 3t^2  + 2\sqrt 2 t + 6);\begin{array}{c}
%   N + \frac{1}{2}  \\
%   1 \\
%   e^{\pi i}   \\
%\end{array}} \right)dt} .
%\end{align*}
%%%%%%%%%%%%%%%%%%%%%%%%%%%%%%%%%%%%%%%%%%%%%%%%%%%%%%%%%%%%
%The first representation is valid for $ - \frac{3\pi}{2} < \arg z < \frac{\pi}{2}$, while the second extends to the wider range $ - 2\pi < \arg z < \pi$. 

For the singulant on the caustic we have
%%%%%%%%%%%%%%%%%%%%%%%%%%%%%%%%%%%%%%%%%%%%%%%%%%%%%%%%%%%%
\begin{equation}\nonumber %\label{eq28a}
\left|\mathcal{F}_{12}^{+}\right|=\left|f(t^{(2)}; 2\sqrt{2},-3)-f(t^{(1)}; 2\sqrt{2},-3)\right|=\frac{27}{4}.
\end{equation}
%%%%%%%%%%%%%%%%%%%%%%%%%%%%%%%%%%%%%%%%%%%%%%%%%%%%%%%%%%%%
%%%%%%%%%%%%%%%%%%%%%%%%%%%%%%%%%%%%%%%%%%%%%%%%%%%%%%%%%%%%
%\begin{equation}\label{eq28a}
%\mathcal{F}_{12}^{+}=f(t^{(2)}; 2\sqrt{2},-3)-f(t^{(1)}; 2\sqrt{2},-3)=27/4; \qquad \mathcal{F}_{21}^{+}=-\mathcal{F}_{12}^{+}=-27/4.
%\end{equation}
%%%%%%%%%%%%%%%%%%%%%%%%%%%%%%%%%%%%%%%%%%%%%%%%%%%%%%%%%%%%
The effective asymptotic parameter in the expansion is thus $\left|z\mathcal{F}_{12}^{+}\right|=6.75$, and hence, the optimal number of 
terms in \eqref{eq27b} is $N_0^{(1)}=\left[\left|z\mathcal{F}_{12}^{+}\right|\omega_1\right]=13$.

Since $\theta=-\frac{\pi}{4}$ it follows that for the integral $T^{(1)}(z;0)$, the corresponding $\theta_{12}^{+}=\frac{\pi}{2}$. The corresponding contour of integration emanating from adjacent saddle $t^{(2)}$
is ${\mathscr P}^{(2)}(\frac{\pi}{2},1)$, see Figure \ref{figure3ab}(c), and hence, the Level 1 re-expansion is of the form
\begin{equation}\nonumber %\label{ExLevel1}
R_{0}^{(1)}(z;\alpha_n)=
\frac{z^{(1-N_0^{(1)})/2}}{6\pi\i} \sum_{r=0}^{N_1^{(2)}-1}{\bf T}_r^{(2)}(1)
\HypTermOne{z}{\frac{N_0^{(1)}+1}{2}-\frac{r+1}{3}}{2}{\frac{27}{4}\e^{\frac{\pi}{2}\i}}+R_{1}^{(1)}(z;0).
\end{equation}
The optimal numbers of terms at Level 1 are  $N_0^{(1)}=\left[2\left|z\mathcal{F}_{12}^{+}\right|\omega_1\right]=27$ and $N_1^{(2)}=\left[\left|z\mathcal{F}_{12}^{+}\right|\omega_2\right]=20$.

With $\theta_{12}^{+}=\frac{\pi}{2}$ and contour ${\mathscr P}^{(2)}(\frac{\pi}{2},1)$ it follows that $\theta_{121}^{+}=\theta_{12}^{+}+5\pi=\frac{11}{2}\pi$, 
and the corresponding contour of integration emanating from adjacent saddle $t^{(1)}$
is ${\mathscr P}^{(1)}(\frac{11}{2}\pi,2)$, see Figure \ref{figure3ab}(d), and hence, the Level 2 re-expansion is of the form
\begin{gather}\begin{split}\nonumber %\label{ExLevel2}
R_{1}^{(1)}(z;0)=&\sum_{r=0}^{N_2^{(1)}-1}\frac{z^{(1-N_0^{(1)})/2}}{\left(2\pi\i\right)^2 6}\\
& \times\left({\bf T}_r^{(1)}(2)
\HypTermTwo{z}{\frac{N_0^{(1)}+1}{2}-\frac{N_1^{(2)}}{3}}{\frac{N_1^{(2)}+1}{3}-\frac{r+1}{2}}{2}{3}
{\frac{27}{4}\e^{\frac{\pi}{2}\i}}{\frac{27}{4}\e^{-\frac92 \pi\i}}\right.\\
&\qquad \left.-{\bf T}_r^{(1)}(3)
\HypTermTwo{z}{\frac{N_0^{(1)}+1}{2}-\frac{N_1^{(2)}}{3}}{\frac{N_1^{(2)}+1}{3}-\frac{r+1}{2}}{2}{3}
{\frac{27}{4}\e^{\frac{\pi}{2}\i}}{\frac{27}{4}\e^{-\frac{13}2 \pi\i}}\right)\\
&\quad+R_{2}^{(1)}(z;0).
\end{split}\end{gather}
The optimal numbers of terms at Level 2 are given in Table \ref{table1}.

Finally, with $\theta_{121}^{+}=\frac{11}2\pi$ and contour ${\mathscr P}^{(1)}(\frac{11}{2}\pi,2)$ it follows that $\theta_{1212}^{+}=\theta_{121}^{+}+3\pi=\frac{17}{2}\pi$,
$\alpha_{1212}^+=5$, and the optimal numbers in \eqref{Step6} are  again given in Table \ref{table1}.

\begin{table}[h]
\caption{The numbers of terms in each series of the hyperasymptotic expansion that are required to  minimise overall the absolute error for 
the $(1 \rightarrow 2)$ Pearcey example derived from (\ref{optimal}). Note that each row corresponds to a decision to stop the re-expansion 
at that stage.  Hence the table row corresponding to level ``two'' corresponds to the truncations required at each level up to two, after 
deciding to stop after two re-expansions of the remainder. Note that all the truncations change with the decision to stop at a particular level.}
\begin{center}
\begin{tabular}{|c|c|c|c|c|c|}
\hline
Level & $N_0^{(1)}$ & $N_1^{(2)}$ & $N_2^{(1)}$ & $N_3^{(2)}$ & error \\
\hline
zero & 13 &  &  &  & $1.9\times 10^{-4}$ \\
one & 27 & 20 &  &  & $9.5\times 10^{-9}$ \\
two & 40 & 40 & 13 &  & $3.8\times 10^{-14}$ \\
three & 54 & 60 & 27 & 20 & $9.0\times 10^{-17}$ \\
\hline
\end{tabular}
\end{center}
\label{table1}
\end{table}%

When we compute our integral numerically with high precision for these values of $x$, $y$ and $z$ we obtain
$$T^{(1)}(z,0)=
0.37277007370182291370+
0.47493131741141216950\i.
$$
The numerics of the hyperasymptotic approximations are given in Table \ref{table1}, and for the Level 3 expansion we display the terms and errors in Figure \ref{figure4}.
We observe in this figure that the remainders in the original Poincar\'e expansions are of the same size as the first neglected terms, as predicted in Section \ref{sec5}.
In fact at all levels are the remainders of a similar size than the first neglected terms. Occasionally, the remainders are considerably smaller.

In this section we derived hyperasymptotic approximations for $T^{(1)}(z,0)$.
Note that we can repeat the calculation for the integral $T^{(1)}(z,1)$.
The \emph{only} changes in the re-expansions are that all the $\theta^+$ are increased by $2\pi$ and all the $\alpha^+$ are increased by $1$.
The optimal numbers of terms will remain the same.

%%%%%%%%%%%%%%%%%
%%% Figure 4 %%%
%%%%%%%%%%%%%%%%%
\begin{figure}
\centering\includegraphics[width=0.6\textwidth]{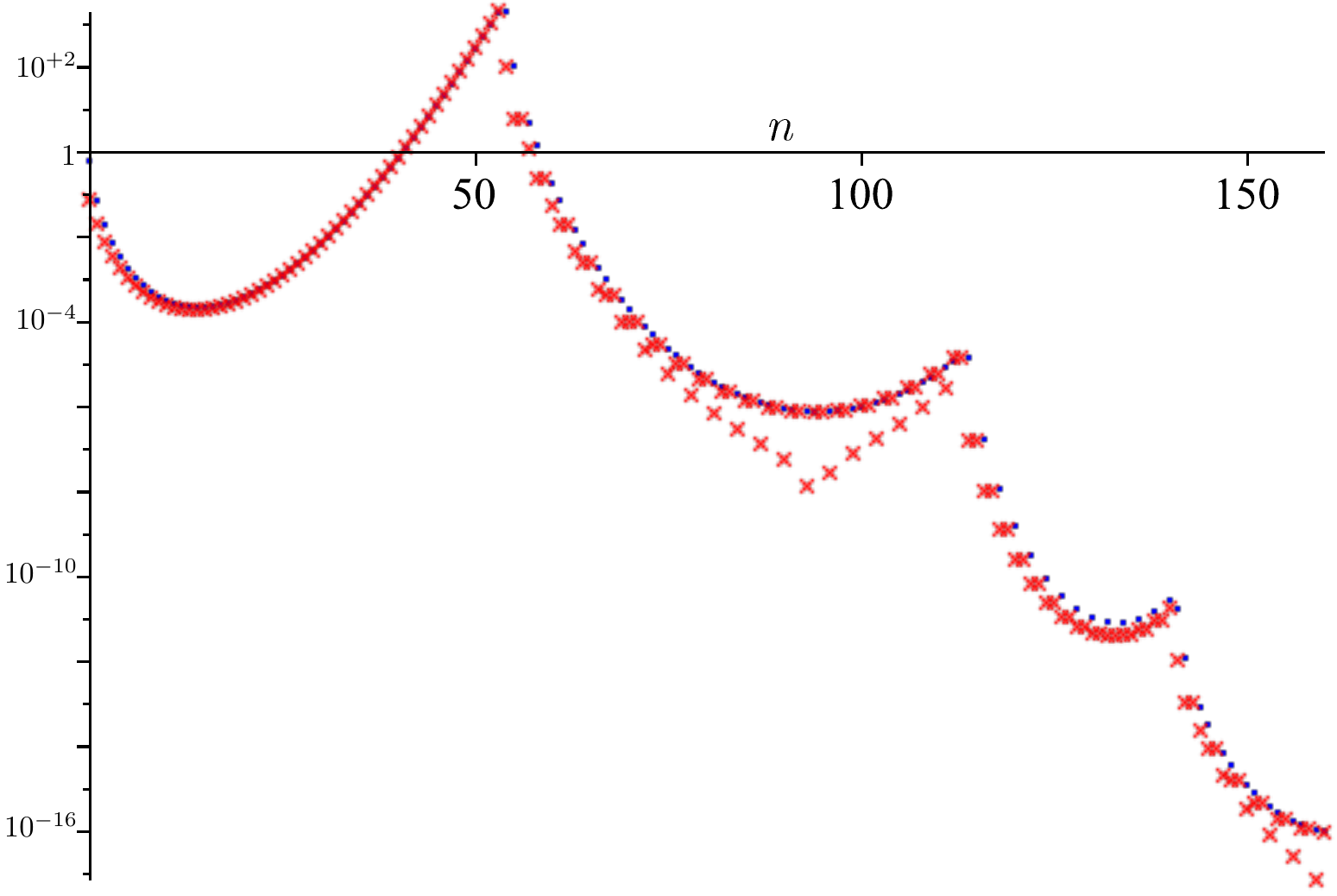}%
\caption{For example 1: The modulus of the $n^{\rm th}$ term in the Level 3 hyperasymptotic expansion (blue dots), and the modulus of the remainder after taking $n$ terms in the approximation (red crosses).}
\label{figure4}
\end{figure}
%%%%%%%%%%%%%%%%%
%%%%%%%%%%%%%%%%%

\section{Example 2: Higher order saddles}\label{sec7}

In the second main example we take an integral of the form \eqref{eq1}, but now with $g(t) \equiv 1$ and
$$f(t)=\ifrac{15}{28}t^7-5t^6+18t^5-30t^4+20t^3\qquad\Longrightarrow\qquad f'(t)=\ifrac{15}4t^2\left(t-2\right)^4.$$
The saddle points are $t^{(1)}=0$ and $t^{(2)}=2$, with  $\omega_1=3$ and $\omega_2=5$.  Hence this example is an example of the 
hyperasymptotic method when both saddles are degenerate. 

Once again, due to the scaling properties of the polynomial $f(t)$ we may take $z=\e^{\i\theta}$ and also choose $\theta=-\frac{\pi}{4}$. The steepest 
descent paths are displayed in Figure \ref{figure8}(a).
For the coefficients in the asymptotic expansions we have
%%%%%%%%%%%%%%%%%%%%%%%%%%%%%%%%%%%%%%%%%%%%%%%%%%%%%%%%%%%%
$$T_{r}^{\left( 1 \right)} \left( {0} \right) = \frac{{\Gamma ( \frac{r+1}{3})}}{{\Gamma (r + 1)}}\left[ {\frac{{\d^{r} }}{{\d t^{r} }}
\left( {\frac{{1 }}{{\frac{15}{28}t^4-5t^3+18t^2-30t+20}}} \right)^{(r +1)/3} } \right]_{t =  0 },$$
%%%%%%%%%%%%%%%%%%%%%%%%%%%%%%%%%%%%%%%%%%%%%%%%%%%%%%%%%%%%
%%%%%%%%%%%%%%%%%%%%%%%%%%%%%%%%%%%%%%%%%%%%%%%%%%%%%%%%%%%%
\begin{align*}
T_{r}^{\left( 2 \right)} \left( {0} \right) & =\frac{{\Gamma ( \frac{r+1}{5})}}{{\Gamma (r + 1)}}\left[ {\frac{{\d^{r} }}{{\d t^{r} }}
\left( {\frac{{(t -2 )^5 }}{{\frac{15}{28}t^7-5t^6+18t^5-30t^4+20t^3-\frac{32}7}}} \right)^{(r +1)/5} } \right]_{t =  2 }\\
& =\frac{{\Gamma ( \frac{r+1}{5})}}{{\Gamma (r + 1)}}\left[ {\frac{{\d^{r} }}{{\d t^{r} }}
\left( {\frac{1}{{\frac{15}{28}t^2+\frac52t+3}}} \right)^{(r +1)/5} } \right]_{t =  0 }\\
& = \frac{\left(5/28\right)^{r/2}}{{3^{(r + 1)/5} }}\Gamma \Big(\frac{r+1}{5}\Big)C_{r}^{(\frac{r+1}{5})} \Big(-\sqrt{\frac{35}{36}}\Big),
\end{align*}
%%%%%%%%%%%%%%%%%%%%%%%%%%%%%%%%%%%%%%%%%%%%%%%%%%%%%%%%%%%%
and the other coefficients are defined via $T_{r}^{\left( m \right)} \left( {\alpha_m} \right)=\e^{2\pi\i\alpha_m (r+1)/\omega_m}T_{r}^{\left( m \right)} \left( {0} \right)$.

For the singulant we have
%%%%%%%%%%%%%%%%%%%%%%%%%%%%%%%%%%%%%%%%%%%%%%%%%%%%%%%%%%%%
\begin{equation}\nonumber %\label{eq28aaa}
\left|\mathcal{F}_{12}^{+}\right|=\left|f(2)-f(0)\right|=\frac{32}{7}.
\end{equation}
%%%%%%%%%%%%%%%%%%%%%%%%%%%%%%%%%%%%%%%%%%%%%%%%%%%%%%%%%%%%
The effective asymptotic parameter in the expansion is thus $\left|z\mathcal{F}_{12}^{+}\right|=\frac{32}{7}$, and hence, the optimal number of 
terms in
%%%%%%%%%%%%%%%%%%%%%%%%%%%%%%%%%%%%%%%%%%%%%%%%%%%%%%%%%%%%
\begin{equation}  \label{eq35b}
T^{(1)} (z;\alpha_1) = \sum\limits_{r = 0}^{N_0^{(1)} - 1} {\frac{{T_{r}^{\left( 1 \right)} \left( {\alpha_1} \right)}}{{z^{r/3} }}}  
+ R_1^{(1)}(z;\alpha_1),
\end{equation}
%%%%%%%%%%%%%%%%%%%%%%%%%%%%%%%%%%%%%%%%%%%%%%%%%%%%%%%%%%%%
is $N_0^{(1)}=\left[\left|z\mathcal{F}_{12}^{+}\right|\omega_1\right]=13$. 

We will focus again on $T^{(1)} (z;0)$ and give only the main details, which are,
$$\theta_{12}^+=0,\quad \theta_{121}^+=9\pi,\quad \theta_{1212}^+=14\pi,\quad \alpha_{12}^+=2,\quad \alpha_{121}^+=4,\quad \alpha_{1212}^+=9.$$
When we compute this integral numerically for this value of $z$ with high precision, we obtain
$$T^{(1)}(z,0)=
1.244081553113296+
0.145693991003805\i.
$$
The numerics of the hyperasymptotic approximations are given in Table \ref{table2}, and for the Level 2 expansion we display the terms and errors in Figure \ref{figure9}.
We observe that this time the remainders in the original Poincar\'e expansion are considerably larger than the first neglected terms, again, as predicted in Section \ref{sec5}.
However, in the higher levels the remainders are again of a similar size than the first neglected terms.

\begin{table}[h]
\caption{The numbers of terms required to minimise the absolute error at each level of the hyperasymptotic re-expansions for the $(3 \rightarrow 5)$ degenerate example.}
\begin{center}
\begin{tabular}{|c|c|c|c|c|c|}
\hline
Level & $N_0^{(1)}$ & $N_1^{(2)}$ & $N_2^{(1)}$ & $N_3^{(1)}$ & error \\
\hline
zero & 13 &  &  &  & $6.9\times 10^{-3}$ \\
one & 27 & 22 &  &  & $3.7\times 10^{-7}$ \\
two & 41 & 45 & 13 &  & $2.0\times 10^{-10}$ \\
three & 54 & 68 & 27 & 22 & $1.1\times 10^{-13}$ \\
\hline
\end{tabular}
\end{center}
\label{table2}
\end{table}%

%%%%%%%%%%%%%%%%%
%%% Figure 8 %%%
%%%%%%%%%%%%%%%%%
%\begin{figure}
%\centering\includegraphics[width=0.5\textwidth]{Figure8Contours5to3.pdf}%
%\caption{Steepest descent paths in the complex $t$ plane passing through the fifth order saddle $t^{(1)}$ ($\omega_{1}=5$) and the third order saddle $t^{(2)}$ ($\omega_{2}=3$) 
%between labelled valleys $V_{i}$, $i=1, 2, \dots, 6$ at infinity. }\end{figure}
%%%%%%%%%%%%%%%%%
%%%%%%%%%%%%%%%%%

%%%%%%%%%%%%%%%%%
%%% Figure 8 %%%
%%%%%%%%%%%%%%%%%
\begin{figure}

%\subfigure[ \hspace{0 mm} ]{\includegraphics[width=7cm]{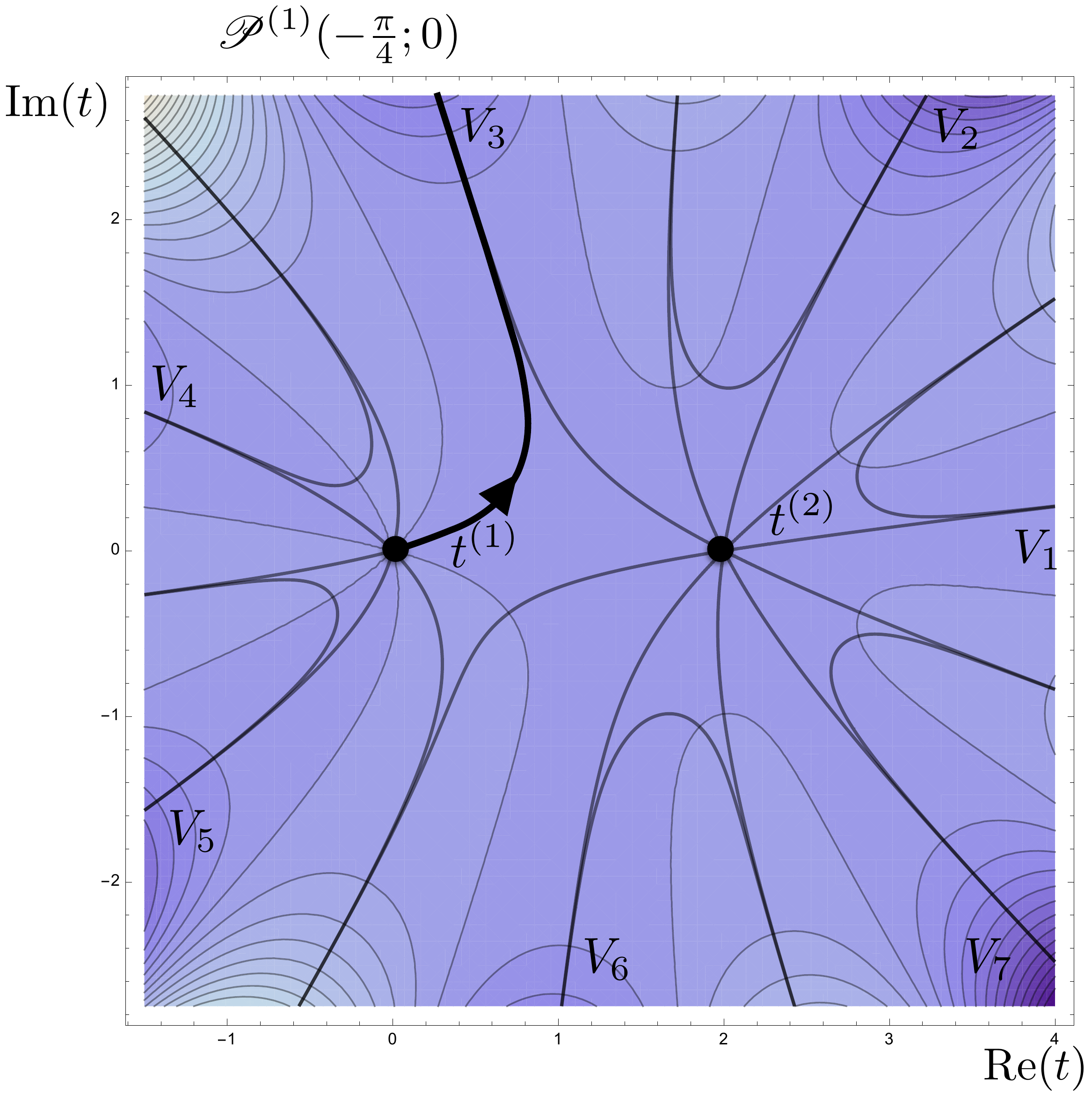}}
%\hfill
\subfigure[ \hspace{0 mm} ]{\includegraphics[width=0.32\hsize]{5-3Figurea}}
%\hfill
\subfigure[ \hspace{0 mm} ]{\includegraphics[width=0.32\hsize]{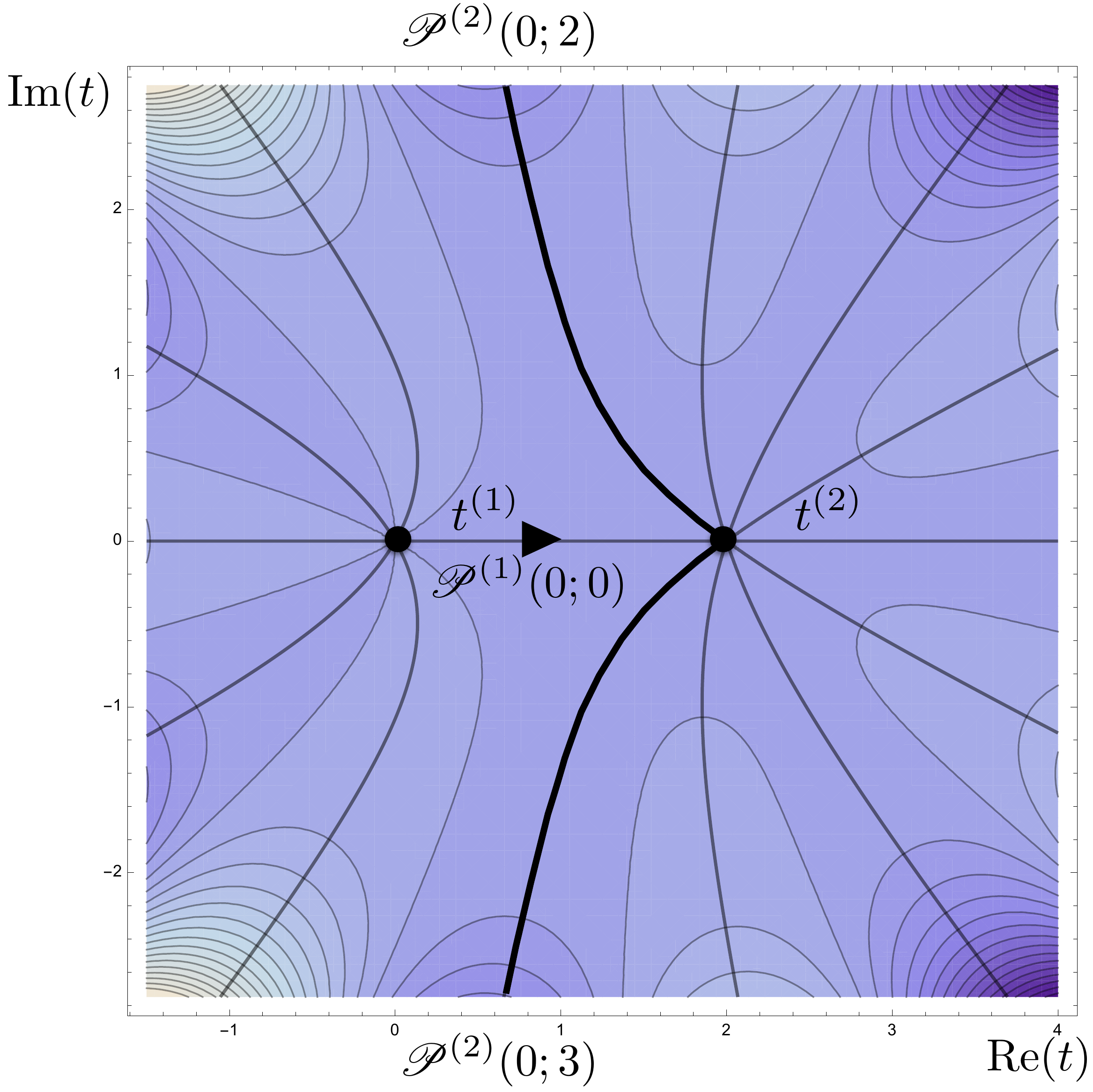}}
%\hfill
\subfigure[\hspace{0 mm} ]{\includegraphics[width=0.32\hsize]{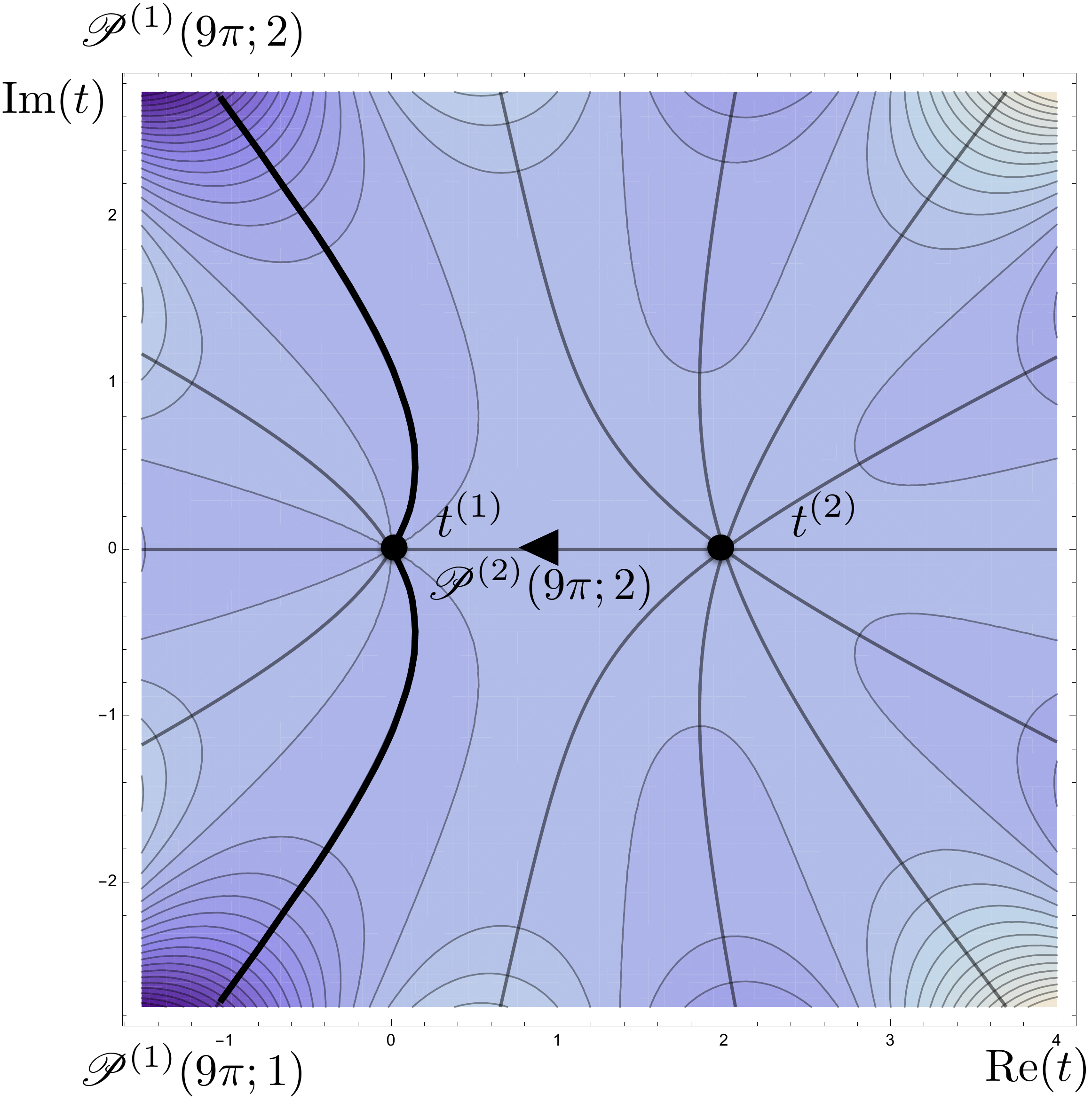}}

\caption{(a) Steepest descent paths in the complex $t$-plane passing through the third order saddle $t^{(1)}$ ($\omega_{1}=3$) and the fifth order saddle $t^{(2)}$ ($\omega_{2}=5$) 
between labelled valleys $V_{j}$, $j=1, 2, \dots, 6$ at infinity for $\theta=-\frac{\pi}{4}$. The path of integration chosen is ${\mathscr P}^{(1)}(-\frac{\pi}{4},0)$ which runs between $t^{(1)}$ and 
$V_{3}$. (b) The rotated steepest descent path ${\mathscr P}^{(1)}(0,0)$, emerging from $t^{(1)}$ connects with $t^{(2)}$ at the Stokes phenomenon $\theta_{12}^{+}=0$. The bold 
lines are the steepest paths that are used in the Level 1 hyperasymptotic expansion about $t^{(1)}$ (\ref{Step3}), (\ref{double}). (c) The steepest descent path ${\mathscr P}^{(2)}(9\pi,
\alpha_{2})$, emerging from $t^{(2)}$ connects with $t^{(1)}$ at the Stokes phenomenon $\theta_{121}^{+}=9\pi$. The bold lines are the steepest paths that are used in the Level 2 hyperasymptotic expansion about $t^{(1)}$  (\ref{Step5}), (\ref{double}). (Or Level 1 hyperasymptotic expansion about $t^{(2)}$.)}

%\caption{(a) The steepest descent paths ${\mathscr P}^{(1)}(-\frac{\pi}{4},0)$, ${\mathscr P}^{(1)}(-\frac{\pi}{4},1)$ in the complex $t$ plane emerging from the simple saddle $t^{(1)}$ 
%($\omega_{1}=2$) and travelling to labelled valleys $V_{i}$, $i=2, 3$ at infinity.  Also shown is the degenerate saddle $t^{(2)}$ ($\omega_{2}=3$).  (c) The steepest descent paths $
%{\mathscr P}^{(2)}(\frac{\pi}{2},\alpha_{2})$, $\alpha_{2}=0,1,2$, emerging from $t^{(2)}$, as a Stokes phenomenon occurs between $t^{(1)}$ and $t^{(2)}$ when $\theta=\pi/2$. The 
%bold lines are the steepest paths that are used in the first level hyperasymptotic expansion 
%about $t^{(1)}$ (\ref{Step3}), (\ref{double}). (d)  The steepest descent paths ${\mathscr P}^{(2)}(-\frac{\pi}{2},\alpha_{2})$, $\alpha_{2}=0,1,2$, emerging from $t^{(2)}$, as a Stokes 
%phenomenon occurs between $t^{(2)}$ and $t^{(1)}$ when $\theta=-\pi/2$. The bold lines are the steepest paths that are used in the second level hyperasymptotic expansion about 
%$t^{(1)}$  (\ref{Step5}), (\ref{double}).  (or first level hyperasymptotic expansion about $t^{(2)}$.  }
\label{figure8}
\end{figure}
%%%%%%%%%%%%%%%%%
%%%%%%%%%%%%%%%%%

%%%%%%%%%%%%%%%%%
%%% Figure 9 %%%
%%%%%%%%%%%%%%%%%
\begin{figure}
\centering\includegraphics[width=0.6\textwidth]{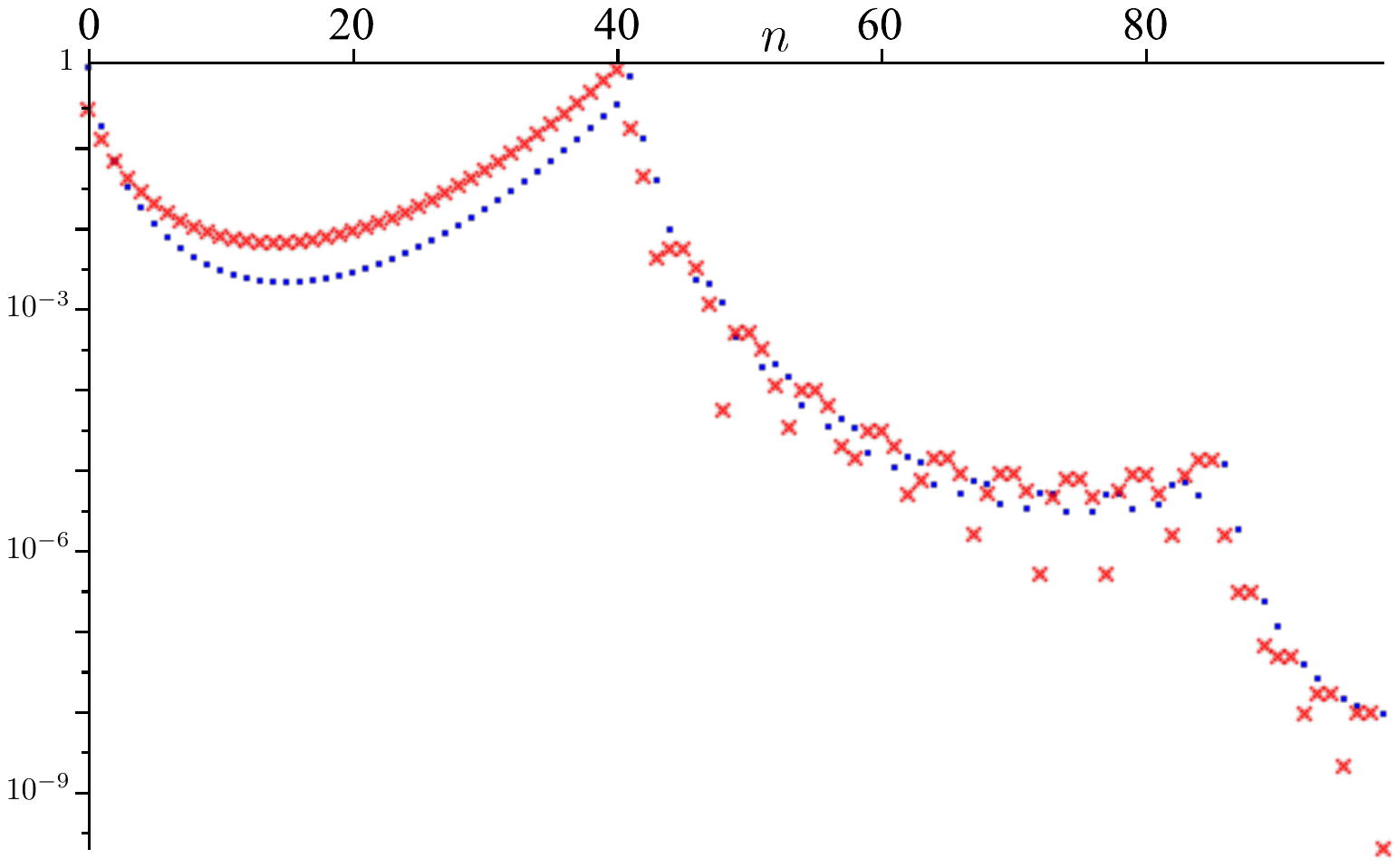}%
\caption{For example 2: The modulus of the $n^{\rm th}$ term in the Level 2 hyperasymptotic expansion (blue dots), and the modulus of the remainder after taking $n$ terms in the approximation (red crosses).}
\label{figure9}
\end{figure}
%%%%%%%%%%%%%%%%%
%%%%%%%%%%%%%%%%%

\section{Example 3: Swallowtail and the adjacency of the saddles}\label{sec8}

In this example we apply hyperasymptotic techniques to determine the relative adjacency, and hence which saddles would contribute to the 
exact remainder terms of an expansion, using algebraic, rather than geometric means.  We choose to illustrate this using the swallowtail integral (\cite[\href{http://dlmf.nist.gov/36.2}{\S36.2}]{NIST:DLMF}).

For the swallowtail integral the bifurcation set is given in \cite[\href{http://dlmf.nist.gov/36.4.E7}{eq. \S36.4.7}]{NIST:DLMF}
and with the notation in this reference we take $t=\frac12\i-\frac14$ and $z=\frac56\i-\frac{25}8$.   (The choice of 
complex parameters is to force one of the saddles to be non-adjacent, see below.)

The resulting semi-infinite contour integral that we will study is again integral \eqref{eq1}, but now with $g(t) \equiv 1$ and
$$f(t)=t^5+\ifrac5{24}\left(4\i-15\right)t^3+\ifrac{45}{16}\left(2\i-1\right)t^2+\ifrac5{256}\left(101+168\i\right)t.$$
The saddle points are $t^{(1)}=\frac74-\frac12\i$, $t^{(2)}=-\frac54-\frac12\i$, and $t^{(3)}=\frac12\i-\frac14$, with  $\omega_1=\omega_2=2$ 
and $\omega_3=3$.
Once again, the polynomial form of $f(t)$ means that we may take $z=\e^{\i\theta}$ with the choice of $\theta=-\frac{\pi}{4}$. To obtain the Level 1 
hyperasymptotic approximation we find that
$$|\mathcal{F}_{12}^{+}|=\frac{9\sqrt{109}}{4},\quad |\mathcal{F}_{13}^{+}|=\frac{125\sqrt{5}}{12},\quad
\theta_{12}^+=3\pi-\arctan\ifrac{10}{3},\quad \theta_{13}^+=3\pi-\arctan\ifrac{278}{29}.$$
It follows that $\alpha_{12}^+=1$ and $\alpha_{13}^+=0$.
We write the Level 1 hyperasymptotic approximation as
%%%%%%%%%%%%%%%%%%%%%%%%%%%%%%%%%%%%%%%%%%%%%%%%%%%%%%%%%%%%
\begin{gather}\begin{split}\label{SwT1}
&T^{(1)} (z ;0)=\sum\limits_{r = 0}^{N - 1} \frac{T_r^{(n)}(0)}{z^{r/2}} +K_{12}\frac{z^{(1-N)/2}}{4\pi\i} \sum_{r=0}^{N_1^{(2)}-1}{\bf T}_r^{(2)}(1)
\HypTermOne{z}{\frac{N+1}{2}-\frac{r+1}{2}}{2}{|\mathcal{F}_{12}^+|\e^{\i(\pi-\theta_{12}^{+})}}\\
&\qquad+K_{13}\frac{z^{(1-N)/2}}{6\pi\i} \sum_{r=0}^{N_1^{(3)}-1}{\bf T}_r^{(3)}(0)
\HypTermOne{z}{\frac{N+1}{2}-\frac{r+1}{3}}{2}{|\mathcal{F}_{13}^+|\e^{\i(\pi-\theta_{13}^{+})}}+R_{1}^{(1)}(z;0).
\end{split}\end{gather}
Note that we have here introduced unknown constant prefactors $K_{nm}$ into the expression for the Level 1 hyperasymptotic expansion 
(\ref{Step3}).  Each constant will be equal to 1 if the saddles $t^{(n)}$ and $t^{(m)}$ are adjacent, and zero otherwise. We could determine 
these constants by examining how the steepest descent contours deform as $\theta$ is varied.   However, here we illustrate their 
algebraic calculation.  
These constants appear in the late term expansion \eqref{LateTerms} (which also follows
from \eqref{SwT1}) as follows:
\begin{gather}\begin{split}\nonumber %\label{SwT2}
T_N^{(1)}(0)=&
\frac{K_{12}}{4\pi\i} \sum_{r=0}^{N_1^{(2)}-1}{\bf T}_r^{(2)}(1)
\frac{\e^{\i\theta_{12}^+\left(\frac{N+1}{2}-\frac{r+1}{2}\right)}\Gamma\left(\frac{N+1}{2}-\frac{r+1}{2}\right)}
{|\mathcal{F}_{12}^+|^{\frac{N+1}{2}-\frac{r+1}{2}}}\\
&+\frac{K_{13}}{6\pi\i} \sum_{r=0}^{N_1^{(3)}-1}{\bf T}_r^{(3)}(0)
\frac{\e^{\i\theta_{13}^+\left(\frac{N+1}{2}-\frac{r+1}{3}\right)}\Gamma\left(\frac{N+1}{2}-\frac{r+1}{3}\right)}
{|\mathcal{F}_{13}^+|^{\frac{N+1}{2}-\frac{r+1}{3}}}+\widetilde{R}_{1}^{(1)}(N;0).
\end{split}\end{gather}
In this (asymptotic) expression, everything is known except, $K_{12}$ and $K_{13}$. Hence if we take two high orders $N=50$ and $N=51$ 
and set $\widetilde{R}_{1}^{(1)}(N;0)=0$ we obtain 2 linear algebraic equations with 2 unknowns. 
The optimal number of terms on the right-hand side may be calculated from \eqref{optimal} and are $N_1^{(2)}=7$ and $N_1^{(3)}=11$. 
Hence we can solve this simultaneous set of equations to obtain numerical approximations for $K_{12}$ and $K_{13}$ as
$$K_{12}=-0.00123+0.00095\i,\qquad K_{13}=1.00076+0.00060\i.$$
Given that the $K_{nm}$ are quantised as integers, within the limits of the errors at this stage, we may infer that $K_{12}=0$ and $K_{13}=1$.  

Hence we may assert that $t^{(3)}$ is adjacent to $t^{(1)}$, but $t^{(2)}$ is not.  This may be confirmed geometrically by consideration of the steepest paths.

\section{Discussion}\label{sec9}

The main results of his paper are the exact remainder terms \eqref{Step1}, \eqref{Step2}, the hyperasymptotic re-expansions \eqref{Step3}, 
\eqref{Step5}, \eqref{Step6}, with novel hyperterminants \eqref{defhyperterminant}, the asymptotic form for the late 
coefficients \eqref{LateTerms} and the improved error bounds for the remainder of an asymptotic 
expansion involving saddle points \eqref{bound}, degenerate or otherwise.  We have illustrated the application of these results to the better-than-exponential asymptotic expansions and calculations of integrals with semi-infinite contours and degenerate saddles.

The results of this paper are more widely applicable, for example to broadening the class of differential equations for which a 
hyperasymptotic expansion may be derived using a Borel transform approach.  We observe 
that all the examples in this paper are of the form
\begin{equation}\nonumber
w(z)=\int_{t^{(1)}}^\infty \e^{-zf(t)}g(t)\d t,
\end{equation}
in which $f(t)$ and $g(t)$ are polynomials in $t$. (In fact $g(t)\equiv 1$.) Using computer algebra, it is not difficult to construct the 
corresponding inhomogeneous linear ordinary differential equations for $w(z)$:
\begin{equation}\label{ODE}
\sum_{p=0}^P a_p(z)w^{(p)}(z)=h(z),
\end{equation}
in which the $a_p(z)$'s and $h(z)$ are polynomials.   

For our second example with $(\omega_{1}, \omega_{2})=(3,5)$ we find $P=6$, the $a_p(z)$'s are polynomials of order 9, and $h(z)$ is of 
order 6. Integrals involving combinations pairs of the contours ${\mathscr P^{(n)}}$ are solutions of the homogeneous version of \eqref{ODE}.

In that example, for the first saddle point we have $\omega_1=3$, and hence, there are 2 independent double infinite integrals through this saddle, and for the second saddle point
we have $\omega_2=5$, and hence, there are 4 independent double infinite integrals through the second saddle. Thus, $P=2+4$.

The differential equation \eqref{ODE} has an irregular singularity of rank one at infinity, but we are dealing with the exceptional cases.  That 
is, the solutions all have initial terms proportional to $\exp(\lambda_p z)z^{\mu_p}$ but 
now with coinciding $\lambda_p$'s. For example, in our second example we have two 
distinct solutions with $\lambda_1=\lambda_2=0$ and four other different solutions but 
each with $\lambda_3=\lambda_4=\lambda_5=\lambda_6=\frac{32}7$.

Note also, that $h(z)$ in \eqref{ODE} is a polynomial in $z$. Hence we should expect a particular integral of \eqref{ODE} to involve only 
integer powers of $z$. However, the particular integral $w(z)=z^{-1/3}T^{(1)} (z;0)$ has, according to \eqref{eq35b},
an asymptotic expansion in inverse powers of $z^{1/3}$.  The resolution of this paradox is that the combination of such solutions  
$$w(z)=\frac{z^{-1/3}}{3}\left(T^{(1)} (z;0)+T^{(1)} (z;1)+T^{(1)} (z;2)\right)$$ 
is itself a particular integral, but contains only integer powers.  This solution involves a star-shaped contour of integration, typically not studied if the problem is posed in terms of integrals alone.

We also remark that differential equations of the form \eqref{ODE} will give us recurrence relations for the coefficients in the asymptotic expansions, and these are, of course, much more efficient than our formula \eqref{eq11}.

\appendix

%%%%%%%%%%%%%%%%%%%%%%%%%%%%%%%%%%%%%%%%%%%%%%%%%%%%%%%%%%%%
%%%%%%%%%%%%%%%%%%%%%%%%%%%%%%%%%%%%%%%%%%%%%%%%%%%%%%%%%%%%
\section{Computation of the generalised hyperterminants}\label{appA}

In this appendix we relate the generalised hyperterminants \eqref{defhyperterminant} to the simpler ones given in \cite{OD98c} and thereby develop an efficient method to calculate them.  

First, the following theorem improves on the main theorem in \cite{OD98c}.

\begin{theorem} For $k\geq 0$, $|\arg z+\arg \sigma_0|<\pi$ and $0<\arg \sigma_{j}-\arg \sigma_{j-1}<2\pi$, $j\geq 1$, $\mathop{\rm Re}(M_1)>2$ and $\mathop{\rm Re}(M_j)>1$, $j\neq 1$, we have the convergent expansion
\[
F^{(k + 1)} \left( z;\begin{array}{c}
   {M_0 ,}  \\
   {\sigma _0 ,}  \\
\end{array}\!\!\!\begin{array}{c}
   { \ldots ,}  \\
   { \ldots ,}  \\
\end{array}\!\!\!\begin{array}{c}
   {M_k }  \\
   {\sigma _k }  \\
\end{array} \right) = \sum\limits_{n = 0}^\infty
A^{(k + 1)} \left( n;\begin{array}{c}
   {M_0 ,}  \\
   {\sigma _0 ,}  \\
\end{array}\!\!\!\begin{array}{c}
   { \ldots ,}  \\
   { \ldots ,}  \\
\end{array}\!\!\!\begin{array}{c}
   {M_k }  \\
   {\sigma _k }  \\
\end{array} \right)U(n + 1,2 - M_0 ,z\sigma _0 ) ,
\]
where
\[
A^{(1)} \left( {n;\begin{array}{c}
   {M_0}  \\
   {\sigma _0}  \\
\end{array}} \right) = \delta_{n,0} \e^{M_0 \pi \i} \sigma_0^{1-M_0} \Gamma(M_0),
\]
\begin{align*}
A^{(2)} \left( n;\begin{array}{c}
   {M_0 ,}  \\
   {\sigma _0 ,}  \\
\end{array}\begin{array}{c}
   {M_1 }  \\
   {\sigma _1 }  \\
\end{array} \right) = & - \e^{\pi M_0 \i} \sigma _0^{2 - M_0  - M_1 } \left( {\e^{ - \pi \i} \frac{{\sigma _1 }}{{\sigma _0 }}} \right)^{n - M_1  + 1}  \Gamma (M_0  + n)\Gamma (M_1 ) \\ 
& \times \frac{{n!\Gamma (M_0  + M_1  - 1)}}{{\Gamma (M_0  + M_1  + n)}}{}_2F_1 \left( {M_0  + n,n + 1\atop M_0  + M_1  + n};1 + \frac{{\sigma _1 }}{{\sigma _0 }} \right) ,
\end{align*}
and when $k\geq 1$,
\begin{align*}
& A^{(k + 1)} \left( n;\begin{array}{c}
   {M_0 ,}  \\
   {\sigma _0 ,}  \\
\end{array}\!\!\!\begin{array}{c}
   { \ldots ,}  \\
   { \ldots ,}  \\
\end{array}\!\!\!\begin{array}{c}
   {M_k }  \\
   {\sigma _k }  \\
\end{array} \right) = \;  \e^{\pi M_0 \i} \sigma _0^{1 - M_0 } \left( {\e^{ - \pi \i} \frac{{\sigma _1 }}{{\sigma _0 }}} \right)^n \Gamma (M_0  + n)\Gamma (M_0  + M_1  - 1)\\
&\qquad \times \sum\limits_{m = 0}^\infty  \frac{(n + m)!\,\,
A^{(k)} \left( m;\begin{array}{c}
   {M_1 ,}  \\
   {\sigma _1 ,}  \\
\end{array}\!\!\!\begin{array}{c}
   { \ldots ,}  \\
   { \ldots ,}  \\
\end{array}\!\!\!\begin{array}{c}
   {M_k }  \\
   {\sigma _k }  \\
\end{array} \right)
}{m!\Gamma (M_0  + M_1  + n + m)} 
{}_2F_1 \left( {M_0  + n,n + m + 1\atop M_0  + M_1  + n + m};1 + \frac{\sigma _1}{\sigma _0} \right).
\end{align*}
Here ${}_2F_1$ stands for the hypergeometric function \cite[\href{http://dlmf.nist.gov/15.2}{\S15.2}]{NIST:DLMF}.
\end{theorem}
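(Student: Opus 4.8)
The plan is to prove the expansion by induction on $k$, exploiting the nested structure of the multiple integral \eqref{defhyperterminant} when all $\omega_j=1$. Writing $F^{(k+1)}(z;M_0,\sigma_0;\dots;M_k,\sigma_k)$ for the simpler hyperterminant, the outermost $t_0$-integration separates cleanly, because the inner $k$-fold integral over $t_1,\dots,t_k$ is itself a hyperterminant one level down with first argument $t_0$:
\[
F^{(k+1)}(z;M_0,\sigma_0;\dots;M_k,\sigma_k)=\int_0^{[\pi-\arg\sigma_0]}\frac{\e^{\sigma_0 t_0}\,t_0^{M_0-1}}{z-t_0}\,F^{(k)}(t_0;M_1,\sigma_1;\dots;M_k,\sigma_k)\,\d t_0 .
\]
This recursive factorisation is the backbone of the argument. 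For the base case $k=0$ I would substitute $w=-\sigma_0 t_0$ in $F^{(1)}(z;M_0,\sigma_0)$ and use the representation $U(1,2-M_0,x)=\Gamma(M_0)^{-1}\int_0^\infty \e^{-w}w^{M_0-1}(x+w)^{-1}\d w$, which follows from the standard Laplace-type integral for $U$ together with Kummer's transformation $U(a,b,x)=x^{1-b}U(a-b+1,2-b,x)$. Tracking the branch of $(-1)^{M_0-1}$ along the rotated contour gives $F^{(1)}=\e^{M_0\pi\i}\sigma_0^{1-M_0}\Gamma(M_0)\,U(1,2-M_0,z\sigma_0)$, i.e.\ the asserted formula with $A^{(1)}(n)=\delta_{n,0}\,\e^{M_0\pi\i}\sigma_0^{1-M_0}\Gamma(M_0)$.

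For the inductive step I would insert the hypothesis $F^{(k)}(t_0;\dots)=\sum_{m\ge0}A^{(k)}(m;\dots)\,U(m+1,2-M_1,\sigma_1 t_0)$, interchange summation and integration, and reduce everything to the single \emph{transfer integral}
\[
J_m=\int_0^{[\pi-\arg\sigma_0]}\frac{\e^{\sigma_0 t_0}\,t_0^{M_0-1}}{z-t_0}\,U(m+1,2-M_1,\sigma_1 t_0)\,\d t_0 .
\]
My route for $J_m$ is to write $U(m+1,2-M_1,\sigma_1 t_0)$ through its integral representation (in the analytically continued, rotated-contour form, since $\arg(\sigma_1 t_0)$ runs over $(\pi,3\pi)$ on the path), interchange, evaluate the $t_0$-integral as a shifted base case $F^{(1)}(z;M_0,\sigma_0-\sigma_1 s)$, and then recognise the surviving parameter integral as an Euler integral for ${}_2F_1(M_0+n,n+m+1;M_0+M_1+n+m;1+\sigma_1/\sigma_0)$ once the Cauchy factor is expanded to extract $U(n+1,2-M_0,z\sigma_0)$. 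Reading off the coefficient of $U(n+1,2-M_0,z\sigma_0)$ in $J_m$ and summing it against $A^{(k)}(m)$ yields the stated recursion for $A^{(k+1)}$; the explicit $A^{(2)}$ is then just the case $k=1$ with $A^{(1)}(m)=\delta_{m,0}(\cdots)$, rearranged by elementary $\Gamma$-function identities. A likely cleaner alternative, which I would keep in reserve, is to represent every factor in $J_m$ by a Mellin--Barnes integral, perform the elementary $t_0$- and parameter-integrals as Beta and Gamma factors, and close the contour to collect the residue series in $U(n+1,2-M_0,z\sigma_0)$ with ${}_2F_1$ coefficients directly.

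The main obstacle is the evaluation and re-expansion of $J_m$. The subtle point is that the output must be organised as a series in $U(n+1,2-M_0,z\sigma_0)$ with the \emph{second} parameter frozen at $2-M_0$; a naive Taylor expansion of $U$ in its argument shifts that second parameter and is therefore the wrong bookkeeping, which is exactly why one is forced through the integral (or Mellin--Barnes) representation and the Euler-integral identification of the ${}_2F_1$. Equally important, and genuinely delicate, is the rigorous justification of the interchanges of summation and integration and of the contour rotations. Here the hypotheses are doing real work: $0<\arg\sigma_j-\arg\sigma_{j-1}<2\pi$ keeps the rotated representation of $U$ valid, $|\arg z+\arg\sigma_0|<\pi$ guarantees convergence of the final $U$-series, and $\mathop{\rm Re}(M_1)>2$ together with $\mathop{\rm Re}(M_j)>1$ ($j\neq1$) secures absolute convergence of the iterated integrals, integrability at the Euler endpoint $\tau=1$, and convergence of the resulting $m$- and $n$-sums. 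Finally, I expect the precise matching of the branch and phase factors $\e^{M_0\pi\i}$ and $(\e^{-\pi\i}\sigma_1/\sigma_0)^n$ to require careful attention, since these are where sign errors would most easily creep in.
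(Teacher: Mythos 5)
Your inductive scheme---peel off the outer $t_0$-integration using the nested structure of the multiple integral, insert the level-$k$ expansion of $F^{(k)}(t_0;\cdots)$, and re-expand the resulting transfer integral $J_m$ as a series in $U(n+1,2-M_0,z\sigma_0)$ with ${}_2F_1$ coefficients---is precisely the method of Theorem 2 of \cite{OD98c}, which is all the paper itself offers by way of proof (it states the argument is ``very similar'' to that theorem, the only difference being the phase restrictions $0<\arg\sigma_j-\arg\sigma_{j-1}<2\pi$ that remove branch ambiguities). Your base case computation is correct, and the delicate points you flag---the rotated, analytically continued representation of $U(m+1,2-M_1,\sigma_1t_0)$ needed because $\arg(\sigma_1t_0)\in(\pi,3\pi)$ on the contour, the justification of the interchanges, and the tracking of the factors $\e^{M_0\pi\i}$ and $\left(\e^{-\pi\i}\sigma_1/\sigma_0\right)^n$---are exactly where the paper says the extra care beyond \cite{OD98c} is required.
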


The proof of this theorem  is very similar to the one for Theorem 2 in  \cite{OD98c}. The main difference here is that we must be more careful with the definitions of the phases and use the restrictions $0<\arg \sigma_{j}-\arg \sigma_{j-1}<2\pi$.   This removes any phase-related ambiguity in the calculation of the hyperterminants.  

Note that the representation of $A^{(k+1)}$ is a convergent infinite series. In a practical implementation it is necessary to truncate the series appropriately.
In our numerical examples we took 40 terms of the convergent series and checked, by taking successively more terms, that this truncation gave us sufficient correct digits in the evaluation of the corresponding hyperterminants.

In the theorem above we also require, for example, that $0<\arg \sigma_{1}-\arg \sigma_{0}<2\pi$.
In fact, one often encounters the case $\sigma_0=\sigma_1$ and some care is then needed to evaluate the ${}_2F_1$.
Numerical methods to evaluate the (confluent) hypergeometric function may be found in \cite{Gil07}.

With these phase clarifications, the generalised hyperterminants \eqref{defhyperterminant} can be expressed in terms of the ones above as 
follows. 

First, by rationalisation, we have
\begin{align*}
& \HypTermK{k+1}{z}{%
\begin{array}{c}
   {M_0 ,}  \\
   {\omega _0 ,}  \\
   {\sigma _0  ,}  \\
\end{array}\begin{array}{c}
   { \ldots ,}  \\
   { \ldots ,}  \\
   { \ldots ,}  \\
\end{array}\begin{array}{c}
   {M_k }  \\
   {\omega _k }  \\
   {\sigma _k }  \\
\end{array}} \\ 
&\qquad = \sum\limits_{\ell _0  = 0}^{\omega _0  - 1} z^{1 - (\ell _0  + 1)/\omega _0 } \int_0^{\left[ {\pi  - \arg \sigma _0 } \right]}  \cdots \int_0^{\left[ {\pi  - \arg \sigma _k } \right]} 
\prod\limits_{j = 1}^k \frac{{\e^{\sigma _0 t_0 } t_0^{M_0  + \ell _0 /\omega _0  - 1} }}{{z - t_0 }} \\ 
& \qquad\qquad\times \sum\limits_{\ell _j  = 0}^{\omega _j  - 1} {\frac{{\e^{\sigma _j t_j } t_{j - 1}^{1 - (\ell _j  + 1)/\omega _j } t_j^{M_j  + \ell _j /\omega _j  - 1} }}{{t_{j - 1}  - t_j }}} \d t_k  \cdots \d t_0    \\
&\qquad = \sum\limits_{\ell _0  = 0}^{\omega _0  - 1}  \cdots \sum\limits_{\ell _k  = 0}^{\omega _k  - 1} z^{1 - (\ell _0  + 1)/\omega _0 } \int_0^{\left[ {\pi  - \arg \sigma _0 } \right]}  \cdots 
\int_0^{\left[ {\pi  - \arg \sigma _k } \right]} \frac{{\e^{\sigma _0 t_0 } t_0^{M_0  + \ell _0 /\omega _0  - (\ell _1  + 1)/\omega _1 } }}{{z - t_0 }} \\
&\qquad\qquad \times \left( \prod\limits_{j = 1}^{k - 1} {\frac{{\e^{\sigma _j t_j } t_j^{M_j  + \ell _j /\omega _j  - (\ell _{j + 1}  + 1)/\omega _{j + 1} } }}{{t_{j - 1}  - t_j }}}  \right)
\frac{{\e^{\sigma _k t_k } t_k^{M_k  + \ell _k /\omega _k  - 1} }}{{t_{k - 1}  - t_k }}\d t_k  \cdots \d t_0  .  
\end{align*}
We make the changes of integration variables from $t_0$ to $s_0$ and from $t_j$ to $s_j$ ($1\leq j \leq k$) via $
t_0  = s_0 \e^{2\pi \gamma _0 \i}$ and $t_j  = s_j \e^{2\pi (\gamma _{j-1}  + \gamma _j )\i}$. Here, the integers $\gamma_0$ and $\gamma_j$ are chosen so that
$\left| {\arg z + \arg \sigma _0  + 2\pi \gamma _0 } \right| < \pi $ and $0 < \arg \sigma _j -\arg \sigma _{j-1} + 2\pi \gamma _j  < 2\pi$. 

Thus, we can finally relate the ${\bf F}^{(k+1)}$ to the $F^{(k+1)}$ with the result:
\begin{align*}
& \HypTermK{k+1}{z}{%
\begin{array}{c}
   {M_0 ,}  \\
   {\omega _0 ,}  \\
   {\sigma _0  ,}  \\
\end{array}\begin{array}{c}
   { \ldots ,}  \\
   { \ldots ,}  \\
   { \ldots ,}  \\
\end{array}\begin{array}{c}
   {M_k }  \\
   {\omega _k }  \\
   {\sigma _k }  \\
\end{array}} \\ 
& = \sum\limits_{\ell _0  = 0}^{\omega _0  - 1}  \cdots \sum\limits_{\ell _k  = 0}^{\omega _k  - 1} z^{1 - (\ell _0  + 1)/\omega _0 } 
\e^{2\pi\i \left(\gamma _{k - 1} (M_{k - 1}  + M_k  + \frac{\ell _{k - 1}}{\omega _{k - 1}}  - \frac{1}{\omega _k})+\gamma _k (M_k  + \frac{\ell _k}{\omega _k} )\right)} \\ 
& \qquad\times \prod\limits_{j = 0}^{k - 2} {\e^{2\pi\i \gamma _j (M_j  + M_{j + 1}  + \frac{\ell _j }{\omega _j}  - \frac{1}{\omega _{j + 1}}  - \frac{\ell _{j + 2}  + 1}{\omega _{j + 2}} )} } \\ 
& \qquad\times \int_0^{\left[ {\pi  - \arg \sigma _0  - 2\pi \gamma _0 } \right]}  \cdots \int_0^{\left[ {\pi  - \arg \sigma _k  - 2\pi (\gamma _{k - 1}  + \gamma _k )} \right]} 
\frac{{\e^{\sigma _0s_0+\cdots+\sigma_k s_k } s_0^{M_0  + \ell _0 /\omega _0  - (\ell _1  + 1)/\omega _1 } }}{{z - s_0 }} \\ 
& \qquad\times \left( \prod\limits_{j = 1}^{k - 1} {\frac{{ s_j^{M_j  + \ell _j /\omega _j  - (\ell _{j + 1}  + 1)/\omega _{j + 1} } }}{{s_{j - 1}  - s_j }}}  \right)
\frac{{ s_k^{M_k  + \ell _k /\omega _k  - 1} }}{{s_{k - 1}  - s_k }}\d s_k  \cdots \d s_0    \\
& = \sum\limits_{\ell _0  = 0}^{\omega _0  - 1}  \cdots \sum\limits_{\ell _k  = 0}^{\omega _k  - 1} z^{1 - (\ell _0  + 1)/\omega _0 } 
\e^{2\pi\i \left(\gamma _{k - 1} (M_{k - 1}  + M_k  + \frac{\ell _{k - 1}}{\omega _{k - 1}}  - \frac{1}{\omega _k})+\gamma _k (M_k  + \frac{\ell _k}{\omega _k} )\right)} \\ 
& \qquad\times \prod\limits_{j = 0}^{k - 2} {\e^{2\pi\i \gamma _j (M_j  + M_{j + 1}  + \frac{\ell _j }{\omega _j}  - \frac{1}{\omega _{j + 1}}  - \frac{\ell _{j + 2}  + 1}{\omega _{j + 2}} )} } \\  
& \qquad\times F^{(k + 1)} \left( z;
   {M_0  + \frac{\ell _0}{\omega _0}  - \frac{\ell _1  + 1}{\omega_1} +1,\atop \sigma _0 \e^{2\pi \gamma _0 \i},}
   {M_1  + \frac{\ell _1}{\omega _1}  -\frac{\ell _2  + 1}{\omega _2} +1,\atop   \sigma _1 \e^{2\pi (\gamma _0  + \gamma _1 )\i} ,}
   ~{\ldots,\atop\ldots,}~ {M_k  + \frac{\ell _k}{\omega _k}\atop \sigma _k \e^{2\pi (\gamma _{k - 1}  + \gamma _k )\i}}
\right) .
\end{align*}

\section{Bounds for the generalised first-level hyperterminant}\label{appB}

\begin{proposition}\label{hyperbound} For any positive real $M$ and positive integer $\omega$, we have
\[
\left| {\frac{z^{1/\omega}}{{\Gamma (M  )}}
\HypTermOne{z}{M}{\omega}{1}}\right| \le \begin{cases} 1 & \!\!\!\text{ if } \;  |\theta| \leq \frac{\pi}{2}\omega , \\ 
\min\left(\left| {\csc \left( {\frac{{\theta}}{{\omega  }}} \right)} \right|,\omega\sqrt {\e\left( {M + \frac{1}{2}} \right)} \right) & \!\!\!\text{ if } \; \frac{\pi}{2}\omega  < |\theta| \leq \pi \omega , \\ 
\frac{{\omega\sqrt {2\pi M} }}{{\left| {\cos \theta} \right|^M }} + \omega\sqrt {\e\left( {M + \frac{1}{2}} \right)} & \!\!\!\text{ if } \; \pi \omega  < |\theta| < \pi \omega+\frac{\pi}{2}. \end{cases}
\]
If $\omega=1$, the quantity $\sqrt {\e\left( {M + \frac{1}{2}} \right)}$ can be replaced by
\begin{equation}\label{gammaratio}
\sqrt \pi  \frac{\Gamma \big( \frac{M}{2} + 1 \big)}{\Gamma \big( \frac{M}{2} + \frac{1}{2} \big)} + 1,
\end{equation}
which is asymptotic to $\sqrt {\frac{\pi }{2}\left( M + \frac{1}{2} \right)} $ as $M\to \infty$ and hence yields a sharper bound for large $M$.
\end{proposition}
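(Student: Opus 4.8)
The plan is to start from the integral definition of the generalised first-level hyperterminant and reduce $\frac{z^{1/\omega}}{\Gamma(M)}\HypTermOne{z}{M}{\omega}{1}$ to a probability-weighted average of a Cauchy-type kernel, from which the three regimes can be read off by elementary geometry. First I would parametrise the defining contour $\int_0^{[\pi]}$ by a real variable, factor $z^{1/\omega}$ out of the denominator $z^{1/\omega}-t^{1/\omega}$, and rescale by $|z|$. Writing $z=|z|\e^{\i\theta}$, this yields a representation of the shape
\begin{equation*}
\frac{z^{1/\omega}}{\Gamma(M)}\HypTermOne{z}{M}{\omega}{1}=\e^{\i\pi M}\int_0^\infty \frac{\d\mu(\sigma)}{1-\sigma\,\e^{\i\vartheta}},
\end{equation*}
where $\d\mu$ is the image of the Gamma density $\frac{\e^{-y}y^{M-1}}{\Gamma(M)}\d y$ under $y\mapsto\sigma=(y/|z|)^{1/\omega}$, hence a probability measure on $[0,\infty)$, and $\vartheta$ is the phase of $z^{1/\omega}$ relative to the contour. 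Since $|\e^{\i\pi M}|=1$ for real $M$, the modulus of the left-hand side equals the modulus of this average, and the task becomes bounding it uniformly in $|z|$. The one genuinely delicate point here is the careful determination of $\vartheta$ from the branch conventions of $(\cdot)^{1/\omega}$: tracking it correctly is exactly what places the regime thresholds at $\frac{\pi}{2}\omega$ and $\pi\omega$ and produces the arguments $\theta/\omega$ and $\theta$ appearing in the statement. I expect this bookkeeping to be the main obstacle, and it is consistent with the paper's emphasis on phase care.

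Next I would dispatch the first two cases. Using $|1-\sigma\e^{\i\vartheta}|=\sqrt{(\sigma-\cos\vartheta)^2+\sin^2\vartheta}$, the minimum over $\sigma\ge0$ is $1$ when $\cos\vartheta\le0$ and $|\sin\vartheta|$ when $\cos\vartheta>0$. In the first regime the minimum is $1$, so $|1-\sigma\e^{\i\vartheta}|\ge1$ and the average is $\le\int\d\mu=1$. In the second regime the crude bound $|1-\sigma\e^{\i\vartheta}|\ge|\sin\vartheta|$ gives the factor $|\csc(\theta/\omega)|$. The competing $\omega\sqrt{\e(M+\frac12)}$ bound must remain finite as the pole approaches the contour, so here I would \emph{not} take the modulus inside. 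Instead I would split into real and imaginary parts, $\Re\frac{1}{1-\sigma\e^{\i\vartheta}}=\frac{1-\sigma\cos\vartheta}{(\sigma-\cos\vartheta)^2+\sin^2\vartheta}$ and $\Im\frac{1}{1-\sigma\e^{\i\vartheta}}=\frac{\sigma\sin\vartheta}{(\sigma-\cos\vartheta)^2+\sin^2\vartheta}$: in the imaginary part the Cauchy kernel integrates in $\sigma$ to $\pi/|\sin\vartheta|$, cancelling the $\sin\vartheta$ and leaving a quantity controlled by the supremum of the density of $\mu$; in the real part the numerator vanishes to order $\sin^2\vartheta$ at $\sigma=\cos\vartheta$, taming the would-be singularity. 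The mode height of the transformed Gamma density, optimised over $|z|$, is a multiple of $\omega\sqrt M$, and a non-asymptotic Stirling estimate (e.g. $\Gamma(M)\ge\sqrt{2\pi}\,M^{M-1/2}\e^{-M}$) turns it into the explicit constant $\omega\sqrt{\e(M+\frac12)}$. Taking the minimum of the two bounds completes the second case.

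For the third regime ($\pi\omega<|\theta|<\pi\omega+\frac\pi2$) the pole of the kernel crosses the integration ray, so I would continue analytically by rotating the contour across it. The crossing contributes $2\pi\i$ times the residue at the point $y_0=|z|\e^{\i(\theta-\pi\omega)}$, whose modulus is proportional to $|z|^{M-1}\e^{-|z|\,|\cos\theta|}/\Gamma(M)$; maximising this over $|z|$ (again by a Stirling/Laplace estimate, the maximum occurring near $|z|=(M-1)/|\cos\theta|$) produces the term $\frac{\omega\sqrt{2\pi M}}{|\cos\theta|^M}$. The leftover continued integral is bounded precisely as in the second case by $\omega\sqrt{\e(M+\frac12)}$, and summing the two contributions gives the third case.

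Finally, for the $\omega=1$ sharpening the substitution is trivial ($\sigma=y/|z|$) and $\mu$ is an exact scaled Gamma distribution, so the imaginary-part/mode-height estimate can be evaluated in closed form rather than merely bounded, by means of a Gamma duplication identity. This replaces the crude $\sqrt{\e(M+\frac12)}$ by the ratio \eqref{gammaratio}, which is asymptotic to $\sqrt{\frac\pi2\left(M+\frac12\right)}$ and hence sharper, since $\frac\pi2<\e$. The hard part throughout will be the uniform-in-$|z|$ estimate yielding $\sqrt M$ without a spurious logarithmic divergence, i.e. the real/imaginary splitting that retains the cancellation near the pole, together with the exact identification of $\vartheta$.
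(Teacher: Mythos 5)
Your overall architecture is sound, and two of the three regimes track the paper closely. The paper's proof also starts from the representation
\begin{equation*}
\frac{z^{1/\omega}}{\Gamma(M)}\HypTermOne{z}{M}{\omega}{1}=\frac{\e^{\pi M\i}}{\Gamma(M)}\int_0^{\infty}\frac{\e^{-t}t^{M-1}}{1+(t/z)^{1/\omega}}\,\d t
\end{equation*}
and the same elementary lower bound ($|1+t/w|\ge 1$ if $|\arg w|\le\pi/2$, $\ge|\sin(\arg w)|$ otherwise), which gives case one and the $|\csc(\theta/\omega)|$ part of case two exactly as you describe. Your third regime (a residue picked up when the pole crosses the integration ray, maximised via $\sup_{r>0}r^{M}\e^{-r|\cos\theta|}$ and the Stirling inequality $M^{M-1/2}\e^{-M}\sqrt{2\pi}\le\Gamma(M)$, plus the continued integral bounded as in case two) is the same idea the paper implements through the functional relation of Paris relating ${\bf F}^{(1)}(z;\cdot)$ to ${\bf F}^{(1)}(z\e^{-2\pi\i\omega};\cdot)$ plus an explicit exponential term.

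Where you genuinely diverge is the $\omega\sqrt{\e(M+\frac{1}{2})}$ estimate. The paper never splits into real and imaginary parts: it rotates the $t$-contour through an acute angle $\varphi$, applies the \emph{same} $1$-or-$\csc$ inequality to the rotated integral at the cost of a factor $\cos^{-M}\varphi=(1+1/M)^{M/2}$ after choosing $\varphi=\arctan(M^{-1/2})$, and then uses convexity of $\csc$ to produce the $\omega\sqrt{M}$ growth; this sidesteps entirely the principal-value cancellation you rely on. Your route is essentially the Boyd--Olver--Nemes density argument (it is how Nemes obtains the sharper $\omega=1$ constant \eqref{gammaratio}, which the paper simply cites), and if carried through it could even improve the asymptotic constant from $\sqrt{\e}$ to $\sqrt{\pi/2}$. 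But the hardest step is only sketched, and there is one concrete trap: the supremum of the transformed density $p(\sigma)=\omega|z|^{M}\sigma^{\omega M-1}\e^{-|z|\sigma^{\omega}}/\Gamma(M)$ is \emph{not} uniform in $|z|$ (it grows like $|z|^{1/\omega}$); only $\sup_{\sigma}\sigma p(\sigma)\le\omega\sqrt{M/(2\pi)}$ is. So every kernel must carry the extra factor of $\sigma$: the imaginary part does; the $\sin^{2}\vartheta$ piece of the real part must instead be bounded pointwise by $1$; and the remaining odd piece $\int(\sigma-\cos\vartheta)\,p(\sigma)\,((\sigma-\cos\vartheta)^{2}+\sin^{2}\vartheta)^{-1}\d\sigma$ needs a genuine cancellation argument to avoid a logarithmic loss. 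None of this is fatal, but it is precisely the part you would have to write out in full, and the paper's rotation trick avoids it at the price of a marginally larger constant.
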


\begin{proof} The case $\omega=1$ was proved in a recent paper by Nemes \cite[Propositions B.1 and B.3]{Nemes17}. 
For the general case, let $M$ be any positive real number and $\omega$ be any positive integer. The integral representation of the first generalised hyperterminant can be re-written
\begin{equation}\label{hypint}
\frac{{z^{1/\omega  } }}{{\Gamma (M  )}}
\HypTermOne{z}{M}{\omega}{1}
 = \frac{\e^{\pi M \i}}{\Gamma (M  )}\int_0 ^{\infty } \frac{\e^{ - t} t^{M   - 1} }{1 + (t/z  )^{1/\omega  } }\d t ,
\end{equation}
provided that $|\theta|<\pi \omega$. For $t \geq 0$, we have
\begin{equation}\label{ineq}
\left| 1 + \frac{t}{w} \right| \ge \begin{cases} 1 & \text{ if } \;  |\arg w| \leq \frac{\pi}{2}, \\ |\sin \left(\arg w\right)| & \text{ if } \; \frac{\pi}{2} < |\arg w| < \pi,\end{cases}
\end{equation}
and therefore
\begin{align*}
\left| {\frac{{z^{1/\omega  } }}{{\Gamma (M  )}}\HypTermOne{z}{M}{\omega}{1}} \right| 
& \le \frac{1}{\Gamma (M  )}\int_0 ^{ \infty } {\frac{{\e^{ - t} t^{M  - 1} }}{{\left| {1 + (t/z   )^{1/\omega  } } \right|}}\d t}  \\ & \le  \begin{cases} 1 & \text{ if } \;  |\theta| \leq \frac{\pi}{2}\omega , \\ \left| {\csc \left( {\frac{{\theta}}{{\omega  }}} \right)} \right| & \text{ if } \; \frac{\pi}{2}\omega  < |\theta| < \pi \omega .\end{cases}
\end{align*}
We continue by showing that the absolute value of the left-hand side of \eqref{hypint} is bounded by $\omega \sqrt {\e\left( {M + \frac{1}{2}} \right)}$ when $\frac{\pi }{2}\omega < \theta \le \pi \omega$. (The analogous bound for the range $-\pi \omega \leq \theta < -\frac{\pi }{2}\omega$ follows by taking complex conjugates.) For this purpose, we deform the contour of integration in \eqref{hypint} by rotating it through an acute angle $\varphi$. Thus, by appealing to Cauchy's theorem and analytic continuation, we have, for arbitrary $0<\varphi<\frac{\pi}{2}$, that
\[
\frac{{z^{1/\omega  }}}{{\Gamma (M  )}}\HypTermOne{z}{M}{\omega}{1} = 
\frac{\e^{\pi M \i}}{{\Gamma (M  )}}\left( {\frac{{\e^{\i\varphi } }}{{\cos \varphi }}} \right)^{M  } 
\int_0 ^{ \infty } {\frac{{\e^{ - \frac{{\e^{\i\varphi } u}}{{\cos \varphi }}} u^{M   - 1} }}{{1 + \left( {\frac{{\e^{\i\varphi } u}}{{z  \cos \varphi }}} \right)^{1/\omega  } }}\d u} 
\]
when $\frac{\pi }{2}\omega < \theta \le \pi \omega$. Employing the inequality \eqref{ineq}, we find that
\begin{align*}
\left| {\frac{{z^{1/\omega  } }}{{\Gamma (M  )}}\HypTermOne{z}{M}{\omega}{1}} \right| 
& \le \frac{1}{\Gamma (M  )}\frac{1}{\cos^{M  } \varphi}\int_0 ^{ \infty } {\frac{{\e^{ - u} u^{M - 1} }}{{\left| {1 + \left( {\frac{{\e^{\i\varphi } u}}{{z  \cos \varphi }}} \right)^{1/\omega  } } \right|}}\d u} 
\\ & \le \frac{1}{\cos^{M  } \varphi}
 \times \begin{cases} 1 & \text{ if } \; \frac{\pi}{2}\omega < \theta \leq \frac{\pi}{2}\omega +\varphi, \\ \left| {\csc \left( {\frac{\theta-\varphi}{{\omega  }}} \right)} \right| & \text{ if } \; \frac{\pi}{2}\omega +\varphi < \theta \le \pi \omega .\end{cases}
\end{align*}
We now choose the value of $\varphi$ which approximately minimizes the right-hand side of this inequality when $\theta =\pi \omega$, namely $\varphi  = \arctan (M^{ - 1/2} )$. We may then claim that
\[
\frac{1}{{\cos ^M (\arctan (M^{ - 1/2} ))}} = \left( {1 + \frac{1}{M}} \right)^{M/2}  \le \omega\sqrt {\e\left( {M + \frac{1}{2}} \right)} ,
\]
when $\frac{\pi }{2}\omega < \theta \le \frac{\pi }{2}\omega + \arctan (M^{ - 1/2} )$, where the last inequality can be obtained by means of elementary analysis. 
In the remaining case $\frac{\pi }{2}\omega + \arctan (M^{ - 1/2} ) < \theta \le \pi \omega$, we have
\begin{align*}
& \frac{{\left| {\csc \left( {\frac{{\theta - \arctan (M^{ - 1/2} )}}{\omega}} \right)} \right|}}{{\cos ^M (\arctan (M^{ - 1/2} ))}} \le 
\frac{{\left| {\csc \left( \pi  - \frac{\arctan (M^{ - 1/2} )}{\omega} \right)} \right|}}{{\cos ^M (\arctan (M^{ - 1/2} ))}} \\
&\qquad= \left( 1 + \frac{1}{M} \right)^{M/2} \csc \left( {\frac{{\arctan (M^{ - 1/2} )}}{\omega}} \right)
 \le \left( 1 + \frac{1}{M} \right)^{M/2} \omega \csc ( \arctan (M^{ - 1/2} )) \\
 &\qquad= \omega\left( 1 + \frac{1}{M} \right)^{(M + 1)/2} \sqrt M \le \omega\sqrt {\e\left( {M + \frac{1}{2}} \right)} .
\end{align*}
Here we have used the convexity of $\csc (x)$ for $0<x<\frac{\pi}{2}$, and that the quantity $\left( {1 + \frac{1}{M}} \right)^{(M+1)/2} \sqrt {\frac{M}{M + a}}$, as a function of $M>0$, increases monotonically if and only if $a\geq \frac{1}{2}$, in which case it has limit $\sqrt{\e}$.

We finish by proving the claimed bound for the range $\pi \omega  < |\theta| < \pi \omega+\frac{\pi}{2}$.  It is sufficient to consider the range $\pi \omega  < \theta < \pi \omega+\frac{\pi}{2}$, as the estimates for $-\pi \omega-\frac{\pi}{2} < \theta < -\pi \omega$ can be derived by taking complex conjugates. The proof is based on the functional relation
\[
\frac{{z^{1/\omega}  }}{{\Gamma (M)}}\HypTermOne{z}{M}{\omega}{1} = \frac{{2\pi \i \omega\left(z \e^{ - \pi \i(\omega - 1)} \right))^M}}{{\Gamma (M)\e^{ z \e^{ - \pi \i \omega} }}}  
+ \frac{{(z\e^{ - 2\pi \i \omega} )^{1/\omega} }}{{\Gamma (M)}}\HypTermOne{z\e^{ - 2\pi \i \omega}}{M}{\omega}{1}
\]
(see \cite[eq. (A.13)]{Paris14}). From this functional relation, we can infer that
\begin{align*}
\left| {\frac{{z^{1/\omega}}}{{\Gamma (M)}}\HypTermOne{z}{M}{\omega}{1}} \right| & \le \frac{{2\pi \omega\left| z \right|^M}}{\Gamma (M)\e^{ \left| {z} \right|\left| {\cos \theta} \right|} }
+ \left| {\frac{{(z\e^{ - 2\pi \i \omega} )^{1/\omega} }}{{\Gamma (M)}}\HypTermOne{z\e^{ - 2\pi \i \omega}}{M}{\omega}{1}} \right| \\ 
& \le \frac{{2\pi \omega\left| z \right|^M}}{\Gamma (M)\e^{ \left| {z} \right|\left| {\cos \theta} \right|} }  + \omega\sqrt {\e\left( {M + \frac{1}{2}} \right)} .
\end{align*}
Notice that the quantity $r^M \e^{ - r a }$, as a function of $r>0$, takes its maximum value at $r=M/a$ when $a>0$ and $M>0$. We therefore find that
\begin{align*}
\left| {\frac{{z^{1/\omega} }}{{\Gamma (M)}}\HypTermOne{z}{M}{\omega}{1}} \right| & \le \frac{{\omega\sqrt {2\pi M} }}{{\left| {\cos \theta} \right|^M }}\frac{{M^{M - 1/2} \e^{ - M} \sqrt {2\pi } }}{\Gamma (M)} + \omega\sqrt {\e\left( {M + \frac{1}{2}} \right)} 
\\ & \le \frac{{\omega\sqrt {2\pi M} }}{{\left| {\cos \theta} \right|^M }} + \omega\sqrt {\e\left( {M + \frac{1}{2}} \right)} .
\end{align*}
The second inequality can be obtained from the inequality $M^{M - 1/2} \e^{ - M}  \sqrt {2\pi } \le \Gamma \left( M \right)$ for any $M>0$ (see, for instance, \cite[\href{http://dlmf.nist.gov/5.6.E1}{eq.~5.6.1}]{NIST:DLMF}).
\end{proof}

\section{The boundary of the domain $\Delta^{(n)}$}\label{appC}

In this appendix, we prove that the boundary of $\Delta^{(n)}$ can be written as a union $\bigcup\nolimits_m \mathscr{P}^{(m)} (\theta _{nm}^ +  ,\alpha _{nm}^ +  ) \cup -\mathscr{P}^{(m)} (\theta _{nm}^ -  ,\alpha _{nm}^ -  )$, where $\mathscr{P}^{(m)} (\theta _{nm}^ \pm  ,\alpha _{nm}^ \pm  )$ are steepest descent paths emerging from the adjacent saddle $t^{(m)}$ (see Figure \ref{fig2}(b)). For $\alpha _{nm}^ \pm$, see \eqref{alphanm}.

First, we show that as we change $\theta$, the steepest descent path $\mathscr{P}^{(n)}(\theta;\alpha_n)$ varies smoothly, unless, perhaps, it encounters an adjacent saddle point $t^{(m)}$. To see this, consider the map $s(t)$ between the $t$-plane and the $s$-surface, defined by
\[
s = f(t) - f_n.
\]
The steepest descent path $\mathscr{P}^{(n)} (\theta;\alpha_n)$ is mapped into a half-line with phase $2\pi\alpha_n-\theta$ emerging from the origin as an $\omega_n^{\rm th}$-order branch point on the $s$-surface. As this half-line is rotated on the $s$-surface, the corresponding steepest descent path varies smoothly, unless we encounter a singularity of the inverse map $t(s)$. Since $f(t)$ is holomorphic in the closure of $\Delta^{(n)}$, and $|f(t)| \to  \infty$ as $t\to \infty$ in $\Delta^{(n)}$, the only singularities of $t(s)$ are branch points located at the images of the saddle points of $f(t)$ under the map $s(t)$. When the half line hits a branch point of $t(s)$ on the $s$-surface, the corresponding steepest descent path hits a saddle point in the $t$-plane.

If we rotate $\theta$ in the positive direction, the steepest descent path $\mathscr{P}^{(n)} (\theta;\alpha_n)$ runs into a saddle point $t^{(m)}$ when $\theta=\theta_{nm}^+$. Likewise, if we rotate $\theta$ in the negative direction, the steepest descent path $\mathscr{P}^{(n)} (\theta;\alpha_n)$ hits a saddle $t^{(m)}$ when $\theta=\theta_{nm}^-$. By definition, the domain $\Delta^{(n)}$ is the union $\bigcup\nolimits_{\theta \neq \theta_{nm}^\pm} \mathscr{P}^{(n)} (\theta;\alpha_n)$, which is precisely the image of the points on the $s$-surface that can be seen from the branch point at the origin minus half lines with phases $2\pi\alpha_n-\theta_{nm}^\pm$ issuing from the points $s(t^{(m)})$ under the map $t(s)$. The boundary of the domain $\Delta^{(n)}$ is therefore consists of the images of these half lines under the map $t(s)$. It is easy to see that the image of the half line with phase $2\pi\alpha_n-\theta_{nm}^+$ emerging from $s(t^{(m)})$ under the map $t(s)$ is precisely the steepest descent path $\mathscr{P}^{(m)} (\theta _{nm}^ +  ,\alpha _{nm}^ +  )$ emanating from the adjacent saddle $t^{(m)}$. Similarly, the image of the half line with phase $2\pi\alpha_n-\theta_{nm}^-$ emerging from $s(t^{(m)})$ under the map $t(s)$ is the steepest descent path $\mathscr{P}^{(m)} (\theta _{nm}^ -  ,\alpha _{nm}^ - )$ emanating from the adjacent saddle $t^{(m)}$. In order to make the orientation of the domain $\Delta^{(n)}$ positive, the orientation of the steepest path $\mathscr{P}^{(m)} (\theta _{nm}^ -  ,\alpha _{nm}^ - )$ has to be reversed.

\section*{Acknowledgments}

The authors thank the referees for very helpful comments and suggestions for improving the presentation.

\end{document}